\newcommand{\be}{\begin{enumerate}}
\newcommand{\ee}{\end{enumerate}}
\newcommand{\beq}{\begin{equation}}
\newcommand{\eeq}{\end{equation}}
\newcommand{\ep}{\varepsilon}
\newtheorem{cor}{Corollary}
\newtheorem{proposition}{Proposition}
\newtheorem{theorem}{Theorem}
\newtheorem{lemma}{Lemma}
\newtheorem{definition}{Definition}
\newtheorem{remark}{Remark}
\begin{document}

\title{Equations and fully residually free groups}
\author{\textsf{Olga Kharlampovich and  Alexei Myasnikov}\\
Mini-course for the GCGTA conference in Dortmund (2007),\\
Ottawa-Saint Sauveur conference (2007), Escola
d'Algebra in Rio de Janeiro (2008) \\and Alagna (Italy, 2008) conference on equations in groups.
} \maketitle

\section{Introduction}

\subsection{Motivation}
Solving equations is one of the main themes  in mathematics. A large
part of the combinatorial group theory revolves around the word and
conjugacy problems - particular types of equations in groups.
Whether a given equation has a solution in a given group is, as a
rule,  a non-trivial  problem.  A more general and more difficult
problem is to decide  which formulas of the first-order logic hold in a given group.

Around 1945 A. Tarski put forward two problems on elementary theories of free groups that served as a motivation for much of the research in group theory and logic for the last sixty years. A joint  effort of mathematicians of several generations culminated in the following theorems, solving  these Tarski's conjectures.
\begin{theorem} [Kharlampovich and Myasnikov \cite{KM4}, Sela \cite{Sela6}] The
elementary theories  of non-abelian free groups coincide.
\end{theorem}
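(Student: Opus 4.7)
The plan is to establish elementary equivalence of all non-abelian free groups by developing enough algebraic geometry over a free group $F$ to reduce every first-order sentence to a Boolean combination of basic sentences whose truth values do not depend on the rank of $F$. The whole strategy rests on the observation that an elementary sentence is essentially a statement about solutions of systems of equations and inequations, so if one can control such systems up to definable equivalence, one can control the first-order theory.

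First I would set up the dictionary between systems of equations $S(x_1,\dots,x_n,a_1,\dots,a_k)=1$ over $F$ and their ``algebraic sets'' $V_F(S)\subseteq F^n$, together with the coordinate groups $F_{R(S)}=G[x_1,\dots,x_n]/R(S)$. The first substantive point is that these coordinate groups, at least over irreducible components, are precisely the finitely generated \emph{fully residually free} (limit) groups, which admit a rich structural theory (splittings as graphs of groups, JSJ decompositions, and embeddings into iterated extensions of centralizers). Second, I would prove a Makanin--Razborov type theorem giving a canonical finite diagram that parametrizes all homomorphisms from a finitely generated group $G$ into $F$; this reduces existential (and more generally positive) formulas to a combinatorial problem on finitely many limit groups.

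The heart of the proof is a quantifier-elimination procedure. To eliminate a single $\forall$ (or equivalently handle a $\forall\exists$-formula) I would develop the \emph{implicit function theorem} for free groups: given a generic family of solutions to an equation, there exist finitely many ``formal solutions'' defined over regular NTQ (non-degenerate triangular quasi-quadratic) extensions, so that every solution of the universal side lifts through one of them. Iterating, every formula $\varphi(x)$ becomes equivalent over $F$ to a Boolean combination of formulas of the form ``the system $U(x,y)=1\ \&\ V(x,y)\neq 1$ has a solution $y$ that factors through a given NTQ tower.'' Because these NTQ towers, the associated formal solutions, and the auxiliary quadratic equations involved have solutions in \emph{every} non-abelian free group (this is where non-abelianness and the ability to realize arbitrary quadratic systems enter), the truth of the reduced formula is independent of the rank.

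The hard part is unquestionably the implicit function / formal-solution step together with the termination of the elimination procedure; this is where one needs the full Makanin--Razborov process, lifting of solutions through NTQ systems, the structure theory of fully residually free groups, and a careful rank/complexity measure on the resulting towers to guarantee that the elimination halts. Once that machinery is in place, elementary equivalence of $F_n$ and $F_m$ for $n,m\ge 2$ follows by checking that the rank-independent ``NTQ-solvability'' statements produced by the procedure do indeed have the same truth value in every non-abelian free group, which one verifies by constructing solutions directly using the free product decompositions available inside any $F_r$ with $r\ge 2$.
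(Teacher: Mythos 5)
The paper does not itself prove Theorem~1; it states it with references to \cite{KM4} and \cite{Sela6} and then develops, as a mini-course, the supporting machinery (algebraic geometry over groups, limit groups, the Elimination Process, NTQ systems). So there is no ``paper's own proof'' to compare against line by line. That said, your roadmap is an accurate high-level description of the strategy actually used in the cited works, and it is consistent with the framework the paper surveys: coordinate groups of irreducible systems are exactly the f.g.\ fully residually free (limit) groups; Makanin--Razborov/Elimination Process parametrizes $\mathrm{Hom}(G,F)$; the implicit function theorem (formal solutions over NTQ systems) is the engine for handling a $\forall\exists$ alternation; and the global argument is a quantifier elimination down to boolean combinations of $\forall\exists$-formulas over NTQ towers. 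You also correctly flag where the real difficulty lives.

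A few things are glossed over and deserve sharpening. First, the target of the quantifier elimination should be stated precisely: one shows every definable set is a boolean combination of sets defined by $\forall\exists$-formulas of a restricted (``NTQ-liftable'') shape, and the termination of the iterated elimination is itself a major theorem requiring a well-founded complexity measure on resolutions/towers; ``iterating'' is hiding most of the work. Second, you should distinguish \emph{regular} NTQ groups (no noncyclic abelian subgroups, hyperbolic, and exactly the f.g.\ models of the theory per the paper's Theorem~11) from NTQ groups in general; the implicit function theorem is formulated over general NTQ systems, while the final model-theoretic classification uses the regular ones. Third, the concluding rank-independence step is stated too casually: the standard conclusion one actually proves is the stronger statement $F_n \prec F_m$ for $2\le n\le m$ (the inclusion is an elementary embedding), from which $\mathrm{Th}(F_n)=\mathrm{Th}(F_m)$ follows; ``checking that the reduced sentences have the same truth value'' must be organized via this elementary-subgroup result rather than case-by-case verification. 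Finally, there is the coefficients issue: the elimination machinery is developed with constants in $F(A)$, whereas the theorem is about the pure group language, and bridging the two (via Merzljakov-type positive-theory arguments and the elementary embedding above) needs to be made explicit rather than assumed.
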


\begin{theorem} [Kharlampovich and Myasnikov \cite{KM4}] The elementary theory of a free group $F$ (with constants for elements from $F$ in the language) is decidable. \end{theorem}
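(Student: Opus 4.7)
The plan is to establish an effective quantifier elimination procedure over the free group $F$, reducing the truth of an arbitrary first-order sentence to the decidability of finite systems of equations and inequations with constants from $F$. The starting point is Makanin's theorem, which asserts that the Diophantine problem over $F$ is decidable: there is an algorithm that, given a finite system $\Sigma(X)=1$ over $F$, decides solvability. Building on this, I would organize the solution set of any finite system via the \emph{Makanin--Razborov diagram}, a finite tree whose branches describe all homomorphisms from the coordinate group of $\Sigma$ into $F$ as compositions of canonical epimorphisms, modular automorphisms, and extensions of centralizers.

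First, I would put an arbitrary sentence into prenex normal form, and by absorbing inequations into existential quantifiers over separating constants, reduce to deciding sentences of the shape
\[
\forall \bar x\ \exists \bar y\ \bigl(\Sigma(\bar x,\bar y)=1 \wedge \Psi(\bar x,\bar y)\neq 1\bigr)
\]
and, more generally, finite alternations of such blocks. The key structural input is that the set of tuples $\bar x$ for which a witness $\bar y$ exists can be described by finitely many parametric families, each cut out by a \emph{regular NTQ} (nondegenerate triangular quasi-quadratic) system. These NTQ systems carry a hierarchical decomposition in which witnesses are produced from parameters by explicit word maps (an implicit function theorem for $F$), so quantification over $\bar y$ is effectively eliminated at the cost of introducing a disjunction over finitely many new systems in $\bar x$ alone.

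Iterating this elimination across the quantifier blocks, each $\forall\exists$-step is transformed into a boolean combination of Diophantine conditions on the remaining universally quantified variables, which in turn are handled by duality (pushing negations inward, converting $\forall$ into $\neg\exists\neg$) and reapplying the NTQ analysis to the resulting existential problems. After finitely many rounds the sentence is reduced to a variable-free boolean combination of equalities in $F$, which is trivially decidable. All branches of the construction must be shown to terminate and to be effectively computable, which is guaranteed by a strict decrease of a complexity measure attached to the JSJ decompositions of the successive coordinate groups.

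The main obstacle, and the bulk of the work, is proving that the Makanin--Razborov process and the accompanying NTQ stratification are simultaneously \emph{canonical enough} to support the implicit function theorem and \emph{effective}: one needs algorithms that compute JSJ decompositions of limit groups, enumerate the relevant modular automorphisms, and certify termination. A secondary obstacle is verifying that the parametric families genuinely cover \emph{all} solutions, so that the elimination of $\exists\bar y$ is sound and complete; this requires a careful analysis of generic families of homomorphisms to $F$ and of the Zariski-type geometry of varieties over $F$. Once these ingredients are in place, the decidability of the elementary theory of $F$ follows by running the elimination algorithm on the input sentence.
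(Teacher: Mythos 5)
Your proposal correctly tracks the machinery outlined in Sections 4--5 of the paper (Makanin's decidability of the Diophantine problem, the elimination process producing NTQ systems and Makanin--Razborov diagrams, the implicit function theorem over NTQ systems, and termination via JSJ/Noetherian arguments), which is indeed the route taken in \cite{KM4}; note, though, that the present paper only \emph{states} this theorem, citing \cite{KM4} for the proof, so there is no proof here to compare against beyond the surrounding framework.

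One imprecision worth flagging: the implicit function theorem is not the assertion that the set of tuples $\bar x$ admitting a witness $\bar y$ is cut out by NTQ systems --- that is a projection statement that follows from the general structure theory of solution sets. Rather, it asserts that over each NTQ system parameterizing $\bar x$, the witnesses $\bar y$ are given by explicit \emph{formula solutions} (word maps in $\bar x$ and auxiliary free parameters), and this is precisely what allows the inner $\exists \bar y$ to be eliminated uniformly and effectively. This extends Merzljakov's theorem (definable Skolem functions for positive $\forall\exists$-formulas, cited in the introduction) to formulas involving inequalities, and that extension --- together with showing the elimination tree terminates using both the complexity count on generalized equations and the Noetherian property of coordinate-group chains --- is where the main technical weight of \cite{KM4} lies. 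Your sketch acknowledges this, so the outline is sound; just be aware that the ``bulk of the work'' you defer is not a single lemma but the entire content of Sections 4--5 made effective.
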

 We recall
that the \emph{elementary theory} $Th(G)$ of a group $G$ is the set
of all first order sentences in the language of group theory which
are true in $G$. A discussion of these conjectures can be found in
several textbooks on logic, model and group theory (see, for example,  \cite{CK}, \cite{EP}, \cite{LS}).

 The work on the Tarski conjectures  was  rather fruitful - several areas of group theory were developed along the way. It was clear from the beginning that to deal with the Tarski's conjectures one
needs to have at least two principal things done:  a precise description of solution sets of systems of
equations over free groups and a robust theory of finitely generated groups which satisfy the same universal (existential) formulas as a free non-abelian group. In the classical  case, algebraic geometry provides a unifying view-point on polynomial equations, while commutative algebra and the elimination theory give the required decision  tools.  Around 1998 three papers appeared almost simultaneously that address analogous issues in the group case.
 Basics of algebraic (or Diophantine) geometry over groups has been outlined  by
Baumslag, Miasnikov and Remeslennikov in \cite{BMR1}, while  the fundamentals of the elimination theory and the theory of fully residually free groups appeared in  the works by Kharlampovich and Miasnikov
\cite{KMNull}, \cite{KMIrc}. These two papers contain results that
became fundamental for the proof of the above two theorems, as well
as in the theory of fully residually free groups.  The goal of these
lectures is to explain why these results are important and to give
some ideas of the proof.

\subsection{Milestones of the theory of equations in free groups}
The first general results on  equations in groups
appeared in the 1960's \cite{Lyndon1}. About this time Lyndon (a former student of Tarski) came up with several extremely
important ideas. One of these is to consider completions of a given group $G$  by extending  exponents into various rings (analogs of extension of ring of scalars in
commutative algebra) and use these completions  to parameterize solutions of equations in $G$. Another idea is to consider groups with free
length functions  with  values in  some ordered abelian group. This allows one to axiomatize the classical Nielsen technique based on the standard length function in free groups and apply it to "non-standard" extensions of free groups, for instance, to ultrapowers of free groups. A link with the Tarski's problems comes here by  the Keisler-Shelah theorem, that states that two groups are elementarily equivalent if and only if their ultrapowers (with respect to a non-principal ultrafilter) are isomorphic.  The
idea to study freely discriminated (fully residually free) groups in
connection to equations in a free group also belongs to Lyndon. He proved \cite{Lyndon2} that the completion $F^{\mathbb{Z}[t]}$ of a free group $F$ by the polynomial ring $\mathbb{Z}[t]$ (now it is called the Lyndon's completion of $F$) is discriminated  by $F$. At the time the Tarski's problems withstood the attack, but these ideas gave birth to several influential theories in modern algebra, which were instrumental in the recent solution of the problems. One of the main ingredients that was lacking at the time was a robust mechanism to solve equations in free groups and a suitable description of the solution sets of equations. The main technical goal of these lectures is to describe a host of methods that altogether give  this mechanism, that we refer to as Elimination Processes.

Also in 1960's Malcev \cite{Malcev} described solutions of the
equation \newline $zxyx^{-1}y^{-1}z^{-1}  = aba^{-1}b^{-1}$ in a
free group, which is the simplest non-trivial quadratic equation in
groups. The description uses the group of automorphisms of the
coordinate group of the equation and the minimal solutions relative
to these automorphisms - a very powerful idea, that nowadays is
inseparable from  the modern approach to equations.
  The first break-through  on Tarski's problem came from Merzljakov (who was a part of Malcev's school  in Novosibirsk). He  proved \cite{Merz} a remarkable theorem that any two nonabelian
free groups of finite rank have the same positive theory and showed that positive formulas in free groups have definable Skolem functions, thus giving quantifier elimination of
positive formulas  in free groups to existential formulas. Recall that the
positive theory of a group
 consists of all positive  (without negations in their normal forms)   sentences
 that are true in this group. These results were
precursors of the current approach to the elementary theory of  a free group.

 In the eighties new crucial concepts were introduced.
  Makanin  proved  \cite{Mak82} the algorithmic decidability of the
Diophantine problem over free groups, and  showed that both, the
universal theory  and the positive theory  of a free group are
algorithmically decidable. He created an extremely
powerful technique (the Makanin elimination process) to deal with equations over free groups.

Shortly afterwards,  Razborov (at the time a  PhD student of
Steklov's Institute, where Makanin held a position)
  described the solution set of an arbitrary system of equations over a free group in terms of what is known now as  Makanin-Razborov diagrams \cite{Razborov1}, \cite{Razborov3}.

 A few years later Edmunds and  Commerford  \cite{ComEd} and
Grigorchuck and  Kurchanov \cite{Gr} described solution sets of arbitrary
quadratic equations over free groups.  These equations came to group theory from topology and their role in  group theory was not altogether  clear then. Now they form one of  the corner-stones of the theory of equations in groups due to their relations to JSJ-decompositions of groups.

\subsection{New age}

These are milestones of the theory of equations in free groups up to 1998. The last missing  principal component in the theory of equations in groups was a general geometric point of view similar to the classical affine algebraic geometry. Back to 1970's Lyndon (again!) was musing on this subject \cite{Lyndon3} but for no avail. Finally, in the late 1990's Baumslag, Kharlampovich, Myasnikov, and Remeslennikov developed the basics  of the algebraic geometry over groups \cite{BMR1,KMNull,KMIrc,KM9,MR2}, introducing analogs of the
standard algebraic geometry notions such as
algebraic sets, the Zariski topology, Noetherian domains,
irreducible varieties, radicals and coordinate groups, rational equivalence, etc.

With all this machinery in place it became possible to make the next
crucial step and tie the algebraic geometry over groups,
Makanin-Razborov process for solving equations, and Lyndon's free
$\mathbb{Z}[t]$-exponential group $F^{\mathbb{Z}[t]}$ into one
closely related theory. The corner stone of this theory is
Decomposition Theorem from \cite{KMIrc} (see Section 4.2  below)
which describes the solution sets  of systems of equations in free
groups in terms of {\em non-degenerate triangular quasi-quadratic}
(NTQ) systems. The coordinate groups of the NTQ systems (later
became also known as {\em residually free towers}) play a
fundamental role in the theory of fully residually free groups, as
well as in the elementary theory of free groups.  The Decomposition
Theorem allows one to look at the processes of the
Makanin-Razborov's type as non-commutative analogs of the classical
elimination processes from algebraic geometry. With this in mind we
refer to   such processes in a
 ll their variations as {\em Elimination Processes} (EP).

In the rest of the notes we discuss more developments of the theory,
focusing mostly on the elimination processes, fully residually free
(limit) groups, and  new techniques that appear here.

\section{Basic notions of algebraic geometry over groups}
\label{se:2-4}

Following \cite{BMR1} and \cite{KM9}  we  introduce here some basic notions of algebraic geometry over
groups.

Let $G$ be a group generated by a finite set $A$,
$F(X)$ be a free group with basis $X = \{x_1, x_2, \ldots  x_n\}$,
we defined $G[X] = G \ast F(X)$ to be a free product of $G$ and
$F(X)$. If $S \subset G[X]$ then the expression $S = 1$ is called
{\em a system of equations} over $G$. As an element of the free
product, the left side of every equation in $S = 1$ can be written
as a product of some elements from $X \cup X^{-1}$ (which are called
{\em variables}) and some elements from $A$ ({\em constants}). To
emphasize this we sometimes write $S(X,A) = 1$.

A {\em solution} of the system $ S(X) = 1$ over a group $G$ is a
tuple of elements $g_1, \ldots, g_n \in G$ such that after
replacement of each $x_i$ by $g_i$ the left hand side of every
equation in $S = 1$ turns into the trivial element of $G$. To study
equations over a given fixed group
 $G$ it is convenient to consider the category of $G$-groups, i.e., groups which
contain the group $G$ as a distinguished subgroup. If $H$ and $K$
are $G$-groups then a homomorphism $\phi: H \rightarrow K$ is a $G$-
homomorphism if $g^\phi = g$ for every $g \in G$, in this event we
write $\phi: H \rightarrow_G K$. In this category morphisms are
$G$-homomorphisms; subgroups are $G$-subgroups, etc. A solution of
the system $S = 1$ over $G$ can be described as a $G$-homomorphism
$\phi : G[X] \longrightarrow G$ such that $\phi(S) = 1$. Denote by
$ncl(S)$ the normal closure of $S$ in $G[X]$, and by $G_S$ the
quotient group $G[X]/ncl(S)$. Then every solution of $S(X) = 1$ in
$G$ gives rise to a $G$-homomorphism $G_S \rightarrow G$, and vice
versa. By $V_G(S)$ we denote the set of all solutions in $G$ of the
system $ S = 1$, it is called the {\em algebraic set defined by}
$S$. This algebraic set $V_G(S)$ uniquely corresponds to the normal
subgroup
$$ R(S) = \{ T(x) \in G[X] \ \mid \ \forall A\in G^n (S(A) = 1
\rightarrow T(A) = 1 \} $$ of the group $G[X]$. Notice that if
$V_G(S) = \emptyset$, then $R(S) = G[X]$. The subgroup $R(S)$
contains $S$, and it is called the {\it radical of $S$}. The
quotient group
$$G_{R(S)}=G[X]/R(S)$$ is the {\em coordinate group} of the
algebraic set  $V(S).$ Again, every solution of $S(X) = 1$ in $G$
can be described as a $G$-homomorphism $G_{R(S)} \rightarrow G$.

 By $Hom_G(H,K)$
we denote the set of all $G$- homomorphisms from $H$ into $K$. It is
not hard to see that the free product $G \ast F(X)$ is a free object
in the category of $G$-groups. This group is called  a free $G$-
group with basis $X$,  and we denote it  by $G[X]$.  A $G$-group $H$
is termed {\em finitely generated $G$-group} if there exists a
finite subset $A \subset H$ such that the set $G \cup A$ generates
$H$. We refer to \cite{BMR1} for a general discussion on $G$-groups.

 A group $G$ is called a \emph{ CSA group} if every maximal abelian
subgroup $M$ of $G$ is malnormal, i.e., $M^g \cap M = 1$ for any $g
\in G - M.$ The abbreviation CSA means conjugacy separability for
maximal abelian subgroups. The class of CSA-groups is quite
substantial. It includes all abelian groups, all torsion-free
hyperbolic groups, all groups acting freely on $\Lambda$-trees  and
many one-relator groups (see, for example, \cite{GKM}.

We define a \emph{Zariski topology} on $G^n$ by taking algebraic
sets in $G^n$ as a sub-basis for the closed sets of this topology.
Namely, the set of all closed sets in the Zariski topology on $G^n$
can be obtained from the set of algebraic sets in two steps:

1) take all finite unions of algebraic sets;

2) take all possible intersections of the sets obtained in step 1).

If $G$ is a non-abelian CSA group and we in the category of
$G$-groups, then the union of two algebraic sets is again algebraic.
Indeed, if $\{w_{i}=1, i\in I\}$ and $\{u_j=1, j\in J\}$ are systems
of equations, then in a CSA group their disjunction is equivalent to
a system
$$[w_i,u_j]=[w_i,u_j^a]=[w_i,u_j^b]=1, \ i\in I,\ j\in J$$ for any two non-commuting
elements $a,b$ from $G$. Therefore the closed sets in the Zariski
topology on $G^n$ are precisely the algebraic sets.

A group $G$ is called \emph {equationally Noetherian} if every
system $S(X) = 1$ with coefficients from $G$ is equivalent over $G$
to a finite subsystem $S_0 = 1$, where $S_0 \subset S$, i.e.,
$V_G(S) = V_G(S_0)$.  It is known that linear groups (in particular,
freely discriminated groups) are equationally Noetherian (see
\cite{Gub}, \cite{Br}, \cite{BMR1}). If $G$ is equationally
Noetherian then the Zariski topology on $G^n$ is {\em Noetherian}
for every $n$, i.e., every proper descending chain of closed sets in
$G^n$ is finite. This implies that every algebraic set $V$ in $G^n$
is a finite union of irreducible subsets (they are called {\it
irreducible components} of $V$), and such decomposition of $V$ is
unique. Recall that a closed subset $V$ is {\it irreducible} if it
is not a union of two proper closed (in the induced topology)
subsets.

\section{Fully residually free groups}
\subsection{Definitions and elementary properties}

 Finitely generated fully residually free groups (limit
groups) play a crucial role in the theory of equations and
first-order formulas over a free group.  It is remarkable that these
groups, which have been widely studied before,  turn out to be  the
basic objects in
 newly developing areas of algebraic geometry and model theory
 of free groups. Recall that a group $G$ is called {\it fully residually
free} (or {\it freely discriminated}, or {\it $\omega$-residually
free}) if for any finitely many non-trivial elements $g_1, \ldots,
g_n \in G$ there exists a homomorphism $\phi$ of $G$ into a free
group $F$, such that $\phi(g_i) \neq 1$ for $i = 1, \ldots, n$. The
next proposition summarizes some simple properties of fully
residually free groups.

\begin{proposition}
  Let $G$ be a fully residually free group. Then $G$ possesses the
following properties.
\begin{enumerate}
  \item \label{e:pr0} $G$ is torsion-free;
  \item \label{e:pr1} Each subgroup of $G$ is a
fully residually free group;
    \item \label{e:pr3} $G$ has the CSA property;
\item \label{e:pr2} Each Abelian subgroup of $G$ is contained in a unique
  maximal finitely generated Abelian subgroup, in particular, each
  Abelian subgroup of $G$ is finitely generated;
  \item \label{e:pr4} $G$ is finitely presented, and has only finitely many conjugacy classes
  of its maximal Abelian subgroups.
  \item \label{e:pr5} $G$ has solvable word problem;
  \item  \label{e:pr6}$G$ is linear;
\item  \label{e:pr7} Every 2-generated subgroup of $G$ is either free or abelian;
\item  \label{e:pr8} If rank ($G$)=3 then either $G$ is free of rank
3, free abelian of rank 3, or a free rank one extension of
centralizer of a free group of rank 2 (that is $G=\langle
x,y,t|[u(x,y),t]=1\rangle$ , where the word $u$ is not a proper
power).
\end{enumerate}
\end{proposition}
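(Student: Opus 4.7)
The plan is to organize the nine claims into three tiers of increasing depth. The first tier, consisting of items (1)--(3), follows from residual freeness once I establish the auxiliary lemma that the centralizer $C_G(h)$ of any nontrivial element $h$ is abelian. For the lemma, given $a,b\in C_G(h)$ with $[a,b]\neq 1$, pick $\phi:G\to F$ sending $h$ and $[a,b]$ to nontrivial elements; since centralizers in free groups are cyclic, $\phi(a)$ and $\phi(b)$ must lie in a common cyclic subgroup and therefore commute, contradicting $\phi([a,b])\neq 1$. From this, (1) is immediate (any $n$-torsion element would survive to $F$) and (2) is trivial by restriction of the discriminating homomorphism. For (3), let $M$ be maximal abelian and $1\neq h\in M\cap M^g$; the lemma identifies $M=C_G(h)$, so $h$ and $ghg^{-1}$ both lie in $M$ and hence commute. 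If $[g,h]\neq 1$, choose $\phi$ nontrivial on $h$ and on $[g,h]$; then in the free group the commuting pair $\phi(h),\phi(g)\phi(h)\phi(g)^{-1}$ lies in a common cyclic subgroup, and since no nontrivial element of a free group is conjugate to its inverse (an immediate cyclic-reduction argument), we must have $\phi(g)\phi(h)\phi(g)^{-1}=\phi(h)$, forcing $[\phi(g),\phi(h)]=1$, a contradiction.

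The second tier, items (4)--(5), is the crux. The uniqueness half of (4) is immediate from (3), since two maximal abelians sharing a nontrivial element both equal its centralizer. Finite generation of abelian subgroups, finite presentability, and finiteness of the set of conjugacy classes of maximal abelians are genuine structural results and cannot be extracted from residual freeness alone; my plan is to invoke the embedding of any finitely generated fully residually free group into Lyndon's completion $F^{\mathbb{Z}[t]}$, together with the Decomposition Theorem of Section~4.2 which presents $G$ at the top of a finite NTQ / residually free tower, and then induct on the height of the tower using amalgamated-product and HNN steps over finitely generated abelian edge groups. This is the main obstacle of the proposition: the statement is essentially a payoff of the elimination machinery developed later in the paper, with no honest shortcut available from residual freeness alone.

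The third tier, items (6)--(9), leverages the structural input of the second. Item (6) is a two-sided enumeration: a finite presentation from (5) makes ``word equals $1$'' recursively enumerable, while residual freeness combined with the classical decidability of the word problem in free groups makes ``word nontrivial'' recursively enumerable, yielding decidability. Item (7) is inherited from linearity of $F^{\mathbb{Z}[t]}$ via the embedding of the second tier. For (8) and (9) I would use the abelian JSJ decomposition of $G$, now available once finite presentation and control of abelian subgroups are in hand, and perform an elementary case analysis: at rank two the only possibilities are a free product (yielding a free group) or a single abelian vertex (yielding an abelian group); at rank three, enumerating the JSJ pieces compatible with three generators produces exactly the three listed normal forms.
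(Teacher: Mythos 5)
Your tier-one arguments for (1)--(3) are correct and fill in detail the paper only cites (\cite{BMR1}): the lemma that $C_G(h)$ is abelian, via mapping a noncommuting pair of centralizing elements to $F$ and using cyclicity of centralizers in free groups, is the standard device, and ruling out $\phi(h)$ being conjugate to its inverse closes malnormality. Your treatment of (4), (5) and (9) also agrees in spirit with the paper, which cites \cite{KMIrc} and \cite{FGMRS}; you rightly locate the weight of the proposition in the embedding into $F^{\mathbb{Z}[t]}$ and the decomposition machinery, and your two-sided enumeration for the word problem is the same McKinsey argument the paper invokes through residual finiteness of $F$.

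The genuine gap is item (7), linearity. You propose to deduce it from ``linearity of $F^{\mathbb{Z}[t]}$,'' but that premise is nowhere established, and it is essentially equivalent to what you are trying to prove: by the Embedding Theorem the finitely generated subgroups of $F^{\mathbb{Z}[t]}$ are exactly the finitely generated fully residually free groups, so asserting that $F^{\mathbb{Z}[t]}$ (equivalently, each $G_n$ in its centralizer-extension chain) is linear begs the question unless you supply an independent argument, say that extensions of centralizers preserve linearity, which is itself nontrivial and not in the paper. The paper's route is different and breaks the circle: by Proposition~2(6) a finitely generated fully residually free group $G$ embeds into an ultrapower of $F$; since $\prod SL_2(\mathbb{Z})\cong SL_2(\prod\mathbb{Z})$, such an ultrapower embeds into $SL_2({}^*\mathbb{Z})$, and because $G$ is finitely generated its image lies in $SL_2(R)$ for some finitely generated subring $R\leq{}^*\mathbb{Z}$. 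You should adopt that argument.

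A secondary point on (8): routing the two-generated dichotomy through abelian JSJ decompositions has the same circularity problem and is in any case overkill. The intended argument is elementary: if $H=\langle a,b\rangle$ is nonabelian and $w(a,b)=1$ for some nontrivial reduced word $w$, choose $\phi\colon H\to F$ with $\phi([a,b])\neq 1$; then $\phi(a),\phi(b)$ generate a nonabelian, hence rank-two, free subgroup of $F$, and Hopficity of the free group of rank two forces $\{\phi(a),\phi(b)\}$ to be a free basis, contradicting the relation $w$. Thus $H$ is free of rank two, using only residual freeness and no structure theory at all.
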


 Properties~\ref{e:pr0} and~\ref{e:pr1} follow immediately from the
definition of an $\mathcal{F}$-group. A proof of
property~\ref{e:pr3} can be found in~\cite{BMR1};
property~\ref{e:pr2} is proven in~\cite{KMIrc}.
Properties~\ref{e:pr2} and ~\ref{e:pr4} are proved in~\cite{KMIrc}.
Solvability of the word problem follows from ~\cite{Mak84} or from
residual finiteness of a free group. Property ~\ref{e:pr8} is proved
in \cite{FGMRS}. Property \ref{e:pr6} follows from linearity of $F$
and property 6 in the next proposition. The ultraproduct of
$SL_2({\mathbb Z})$ is $SL_2(^*{\mathbb Z})$, where $^*{\mathbb Z}$
is the ultpaproduct of ${\mathbb Z}.$ (Indeed, the direct product
$\prod SL_2({\mathbb Z})$ is isomorphic to $SL_2({\prod\mathbb Z}).$
Therefore, one can define a homomorphism from the ultraproduct of
$SL_2({\mathbb Z})$ onto $SL_2(^*{\mathbb Z})$. Since the
intersection of a finite number of sets from an ultrafilter again
belongs to the ultrafilter, this epimorphism is a monomorphism.)
Being finitely generated $G$ embeds in $SL_2(R)$, where $R$ is a
finitely generated subring in $^*{\mathbb Z}$.

\vspace{2mm} \begin{proposition}(no coefficients)  Let $G$ be a
finitely generated group. Then the following conditions are
equivalent:
\begin{itemize}
 \item [1)]  $G$ is freely  discriminated (that is for finitely many non-trivial elements $g_1,\ldots ,g_n\in G$ there exists a homomorphism $\phi$ from
 $G$ to a free group such that $\phi (g_i)\neq 1$ for $i=1,\ldots ,n$);
 \item [2)] [Remeslennikov] $G$ is universally equivalent to $F$ (in the language without constants);
 \item [3)] [Baumslag, Kharlampovich, Myasnikov, Remeslennikov] $G$ is the coordinate group of an irreducible variety over
 a free group.
 \item [4)]  [Sela] $G$ is a limit group (to be defined in the proof of proposition 3).
 \item [5)] [Champetier and Guirardel] $G$ is a limit of free groups in Gromov-Hausdorff
 metric (to be defined in the proof of proposition 3).
 \item [6)] $G$ embeds into an ultrapower of free
 groups.
 \end{itemize}
\end{proposition}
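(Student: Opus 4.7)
The strategy is to prove the cycle $(1) \Rightarrow (6) \Rightarrow (2) \Rightarrow (1)$, and then to establish $(1) \Leftrightarrow (3)$, $(1) \Leftrightarrow (4)$, and $(1) \Leftrightarrow (5)$ directly, with the fully residually free property (1) as the central pivot. Since items (4) and (5) have not yet been defined, their definitions will be pinned down in the course of the proof.

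For $(1) \Rightarrow (6)$: enumerate the countably many nontrivial elements $g_1, g_2, \ldots$ of $G$; (1) supplies, for each $n$, a homomorphism $\phi_n \colon G \to F$ nontrivial on $g_1, \ldots, g_n$, and, fixing a nonprincipal ultrafilter $\mathcal{U}$ on $\mathbb{N}$, the map $g \mapsto [\phi_n(g)]_{\mathcal{U}}$ embeds $G$ into $F^{\mathbb{N}}/\mathcal{U}$. For $(6) \Rightarrow (2)$: \L{}o\'{s}'s theorem gives $F^{\mathbb{N}}/\mathcal{U} \equiv F$, and universal sentences descend along subgroups, so $\mathrm{Th}_\forall(F) \subseteq \mathrm{Th}_\forall(G)$; conversely, $(6) \Rightarrow (1)$ is immediate (a finite tuple of nontrivial elements has nontrivial image in the ultrapower, hence is discriminated by a single $\phi_n$), after which any existential sentence true in $G$ reduces, via disjunctive normal form, to finitely many inequalities that are separated in $F$ by (1). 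For $(2) \Rightarrow (1)$: present $G$ on generators $\bar{x}=(x_1,\ldots,x_m)$ by its (possibly infinite) set of relators, which cuts out an algebraic set in $F^m$; by equational Noetherianity of $F$ (Section~\ref{se:2-4}) this system is equivalent to a finite subsystem $R_1=\cdots=R_s=1$. Given nontrivial $g_i=w_i(\bar{x})$, the existential sentence $\exists\bar{y}\,\bigwedge_j R_j(\bar{y})=1 \wedge \bigwedge_i w_i(\bar{y})\ne 1$ holds in $G$ (witnessed by $\bar{x}$), hence in $F$ by equivalence of existential theories; any witness $\bar{f}$ automatically satisfies all relators of $G$ and therefore defines a homomorphism $G \to F$ separating the $g_i$.

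For $(1) \Leftrightarrow (3)$: realizing $G$ as the coordinate group $G_{R(S)}$ of an algebraic set $V(S)\subseteq F^n$, full residual freeness is equivalent to the assertion that for every finite list $h_1,\ldots,h_k$ of nontrivial elements the proper closed subsets $V(S)\cap V(h_i) \subsetneq V(S)$ do not cover $V(S)$, which is precisely the irreducibility of $V(S)$ (closed sets in the Zariski topology on $F^n$ being exactly the algebraic sets, thanks to CSA of $F$). For $(1) \Leftrightarrow (4)$: Sela's limit group is defined by a stable sequence of homomorphisms $h_n \colon G \to F$ (for each $g\in G$, either $h_n(g)=1$ eventually or $h_n(g)\ne 1$ eventually) with trivial stable kernel; one extracts such a sequence from the $\phi_n$ above by a diagonal argument on the enumeration $g_1,g_2,\ldots$, and conversely any finite collection of nontrivial elements lies outside the stable kernel and is separated by almost all $h_n$. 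For $(1) \Leftrightarrow (5)$: a finitely generated marked group $(G,\bar{s})$ is a Gromov--Hausdorff limit of marked free groups iff for every $R$ there is a marked free group agreeing with $(G,\bar{s})$ on the ball of radius $R$, equivalently a homomorphism $G \to F$ injective on that ball, which is full residual freeness applied to the finite set of nontrivial elements of length $\le R$.

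The main obstacle is the $(6) \Rightarrow (2)$ step, where first-order information must be transferred cleanly between $G$, its ultrapower embedding, and $F$ itself. A secondary subtlety concerns the rank-one case: if $G$ is abelian then ``$F$'' in (2) must be read as $\mathbb{Z}$ rather than as a nonabelian free group, since commutativity is a universal sentence separating $\mathbb{Z}$ from $F_2$; with this convention the chain of equivalences closes.
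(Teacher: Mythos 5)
Your proposal takes a genuinely different route from the paper. The paper proves 1)$\Leftrightarrow$2) directly in both directions using equational Noetherianity and an explicit universal sentence $\Psi$; proves 1)$\Leftrightarrow$3) exactly as you do; proves 1)$\Leftrightarrow$4) via the geometric (rescaled real-tree) definition of limit group, where the direction 4)$\Rightarrow$1) leans on the nontrivial fact that limit groups are finitely presented; and disposes of 2)$\Leftrightarrow$6) and 5)$\Leftrightarrow$6) by citation (Bell--Slomson, Champetier--Guirardel). You instead build the cycle $(1)\Rightarrow(6)\Rightarrow(2)\Rightarrow(1)$ by doing the ultrapower argument explicitly, and you replace the real-tree definition of limit group by the stable-kernel characterization, which makes 4)$\Leftrightarrow$1) almost tautological. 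Both of those substitutions are legitimate and arguably cleaner; the ultrapower cycle makes the model-theoretic transfer concrete, and the stable-kernel definition avoids the finite-presentability input.

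Two steps are glossed over. First, in your $(6)\Rightarrow(2)$ step you write that ``$(6)\Rightarrow(1)$ is immediate (a finite tuple of nontrivial elements\ldots is discriminated by a single $\phi_n$).'' The embedding $G\hookrightarrow F^{\mathbb{N}}/\mathcal{U}$ is abstract: choosing coordinate representatives of the generators gives maps from the free group on the generating set to $F$, but these need not factor through $G$, because $G$ may have an infinite relator set and the set of indices killing any particular relator is only $\mathcal{U}$-large, not cofinite. You must first invoke equational Noetherianity of $F$ to replace the relator set by a finite equivalent subsystem; only then does the $\mathcal{U}$-intersection argument produce an index $n$ whose associated map factors through $G$ and separates the $g_i$. (Alternatively: get $\mathrm{Th}_\forall(F)\subseteq\mathrm{Th}_\forall(G)$ by \L o\'s plus subgroup descent, then embed $F_2$ into a nonabelian $G$ to get the reverse inclusion, exactly parallel to the paper's $(2)\Rightarrow(1)$.) Second, and more seriously, in $(5)\Rightarrow(1)$ you assert that ``a marked free group agreeing with $(G,\bar{s})$ on the ball of radius $R$'' is \emph{equivalently} ``a homomorphism $G\to F$ injective on that ball.'' Only one direction is immediate: such a homomorphism produces an agreeing marked free group. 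Conversely, a marked free group $(H,T)$ agreeing with $(G,S)$ on relations of length $\le R$ does \emph{not} in general yield a homomorphism $G\to H$, since $G$ may have long relators that fail in $H$. The correct argument passes through $(6)$: take an ultraproduct of the approximating marked free groups, check that $G$ embeds into it (each relator of $G$ is eventually a relator of the approximants, so the induced map is well-defined and injective), and then appeal to $(6)\Rightarrow(1)$. This is exactly the Champetier--Guirardel route the paper cites, and the shortcut as you stated it is false.
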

\vspace{2mm} \begin{proposition} (with coefficients) Let $G$ be a
finitely generated group containing a free non-abelian group $F$ as
a subgroup. Then the following conditions are equivalent:

\begin{itemize}
 \item [1)]  $G$ is $F$-discriminated by $F$;
 \item [2)] [Remeslennikov] $G$ is universally equivalent to $F$ (in the language with constants);
 \item [3)] [Baumslag, Kharlampovich, Myasnikov, Remeslennikov] $G$ is the coordinate group of an irreducible variety over
 $F$.
 \item [4)]  [Sela] $G$ is a restricted limit group.
 \item [5)] [Champetier and Guirardel] $G$ is a limit of free groups in Gromov-Hausdorff
 metric.
 \item [6)] $G$ $F$-embeds into an ultrapower of $F$.
\end{itemize}
\end{proposition}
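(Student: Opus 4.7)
The plan is to prove the statement along the lines of the no-coefficients Proposition, with all constructions and formulas now constrained to fix $F$ pointwise; concretely, I would establish the central equivalence $(1) \Leftrightarrow (2)$ first and then attach $(3)$, $(4)$, $(5)$, $(6)$ as separate characterizations of $F$-discrimination. The ambient category throughout is that of $F$-groups, and the relevant logic is the group language augmented by constants for the elements of $F$.

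For $(1) \Leftrightarrow (2)$ I would use that $F$ is equationally Noetherian, so $G$ may be presented as $F_{R(S)}$ for a finite system $S$; then asking that any nontrivial $g_1,\dots,g_n\in G$ admit a discriminating $F$-homomorphism to $F$ is precisely the $F$-existential sentence $\exists\bar y\,\bigl(S(\bar y)=1 \wedge \bigwedge_i g_i(\bar y)\neq 1\bigr)$, which is true in $G$ by construction, and universal/existential equivalence transfers it to $F$; conversely, a discriminating $F$-homomorphism pulls existential witnesses from $G$ back to $F$. For $(1) \Leftrightarrow (3)$ I would argue via the algebraic geometry of Section~\ref{se:2-4}: $V_F(S)$ decomposes into finitely many irreducible components by equational Noetherianity, and $F$-discrimination forces exactly one such component, since otherwise the CSA trick collapses the disjunction of the defining equations of two proper components into a single equation satisfied in $G$ but not in $F$. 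For $(1) \Rightarrow (6)$ I would enumerate the countably many finite subsets of nontrivial elements of $G$ and pick for each a discriminating $F$-homomorphism $G \to F$; the diagonal map into $F^I$ followed by the quotient to any nonprincipal ultrapower $F^I/\mathcal{U}$ is an $F$-embedding by {\L}o\'s's theorem. Conversely, $(6) \Rightarrow (1)$ is immediate: given an $F$-embedding $G \hookrightarrow F^I/\mathcal{U}$ and finitely many nontrivial $g_1,\dots,g_n \in G$, the set of coordinates on which every $g_i$ has nontrivial representative belongs to $\mathcal{U}$, hence is nonempty, and projection to any such coordinate is a discriminating $F$-homomorphism.

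For $(1) \Leftrightarrow (4) \Leftrightarrow (5)$: a restricted limit group, as one defines it in the proof, is a marked limit of $F$-quotients under $F$-epimorphisms that are eventually injective on each prescribed finite set, which is a direct reformulation of $F$-discrimination; the Champetier--Guirardel theorem then identifies such limits with limits of the marked group $F$ in the Gromov--Hausdorff topology on the compact space of finitely generated marked $F$-groups. The main obstacle is the geometric/logical interface $(1) \Leftrightarrow (3)$: to collapse a finite disjunction of equations into a single equation one relies crucially on the CSA property of $F$, and to guarantee a finite decomposition into irreducible components one needs equational Noetherianity; checking that both behave correctly in the $F$-group category, and that an additional irreducible component really does produce a universal sentence separating $F$ from $G$, is the technical heart of the argument.
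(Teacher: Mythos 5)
Your proposal mirrors the paper's architecture on the algebraic side: $(1)\Leftrightarrow(2)$ via equational Noetherianity and existential/universal sentences in the language with constants, $(1)\Leftrightarrow(3)$ via the Zariski topology in the CSA--Noetherian setting, and the ultrapower characterization $(6)$. For $(1)\Leftrightarrow(2)$ you are essentially following the paper. For $(6)$ you give a direct argument where the paper cites a general model-theory lemma (Bell--Slomson); that is a fine alternative, but note that ``any nonprincipal ultrapower'' is too loose: if the index set is the set of \emph{all} finite subsets of $G\setminus\{1\}$, the set of indices containing a fixed $g$ is not cofinite, so you must take an ultrafilter extending the filter of tails $\{i : g\in i\}$ --- or, as the paper does implicitly, index by increasing balls in the Cayley graph so that the relevant sets are cofinite. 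Similarly, in the converse direction the ``projection to a coordinate'' only works after fixing representatives of the generators of $G$ in $F^I$ and intersecting with the ($\mathcal U$-large) set of coordinates where a finite defining system for $G$ holds.

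The real gap is $(1)\Leftrightarrow(4)\Leftrightarrow(5)$. You assert that the notion of a (restricted) limit group ``is a direct reformulation of $F$-discrimination,'' but this is not the definition Sela gives and the paper uses. A limit group is defined by taking a sequence of homomorphisms $\phi_j:G\to F$, rescaling the induced actions on the Cayley graph of $F$, passing to a Gromov--Hausdorff limiting action on an $\mathbb{R}$-tree $Y$, and dividing by the kernel of that action. Showing that an $F$-discriminated $G$ is a limit group genuinely requires choosing $\phi_n$ injective on the ball of radius $n$ and arguing that the limit kills nothing, i.e., $G\cong L$. The converse is even less of a reformulation: one needs the nontrivial fact that limit groups are finitely presented, so that all but finitely many of the $h_n:H\to F$ factor through the limit $G$, producing the discriminating family $\psi_n:G\to F$. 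Without that input the implication $(4)\Rightarrow(1)$ does not close. Likewise $(5)\Leftrightarrow(6)$ is a theorem of Champetier--Guirardel (limit of marked free groups iff f.g.\ subgroup of an ultraproduct of free groups), not an immediate rephrasing. Finally, your mechanism for $(1)\Leftrightarrow(3)$ --- ``the CSA trick collapses the disjunction of the defining equations of two proper components into a single equation satisfied in $G$ but not in $F$'' --- misstates the role of CSA. CSA is used once and for all to ensure that closed sets in the Zariski topology coincide with algebraic sets (so ``irreducible'' is meaningful at the level of systems); the actual non-discrimination witness when $V(S)=V(S_1)\cup V(S_2)$ is a pair $s_i\in R(S_i)\setminus R(S_j)$, any $F$-point of $V(S)$ necessarily killing one of them, and the converse is that a non-discriminable tuple $\{s_1,\dots,s_n\}$ yields the decomposition $V(S)=\bigcup_i V(S\cup\{s_i\})$.
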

We will prove Proposition 3, the proof of Proposition 2 is very
similar. We will first prove the equivalence 1)$\Leftrightarrow$ 2).
Let $L_A$ be the language of group theory with generators $A$ of $F$
as constants.
 Let $G$ be a f.g. group which is $F$-discriminated by $F$.
  Consider a formula
\[
\exists X (U(X,A)=1 \wedge W(X,A)\ne 1).
\]
If this formula is true in $F$, then it is also true in $G$, because
$F\leq G$. If it is true in $G$, then for some $\bar X\in G^m$ holds
$U(\bar X,A)=1$ and $W(\bar X,A)\neq 1$. Since $G$ is
$F$-discriminated by $F$, there is an $F$-homomorphism $\phi\colon
G\to F$ such that $\phi (W({\bar X},A))\neq 1$, i.e. $W({\bar
X}^{\phi},A)\neq 1$.  Of course $U({\bar X}^{\phi},A)= 1.$ Therefore
the above formula is true in $F$. Since in $F$-group a conjunction
of equations [inequalities] is equivalent to one equation [resp.,
inequality], the same existential sentences in the language $L_A$
are true in $G$ and in $F$.

Suppose now that $G$ is $F$-universally equivalent to $F$. Let
$G=\langle X, A\mid S(X,A)=1\rangle $, be a presentation of $G$ and
$w_1(X,A),\dots ,w_k(X,A)$ nontrivial elements in $G$. Let $Y$ be
the set of the same cardinality as $X$. Consider a system of
equations $S(Y,A)=1$ in variables $Y$ in $F$. Since the group $F$ is
equationally Noetherian, this system is equivalent over $F$ to a
finite subsystem $S_1(Y,A)=1$. The formula
\[
\Psi =\forall Y(S_1(Y,A)=1\to \bigl(w_1(Y,A)=1\vee\dots \vee
w_k(Y,A)=1)\bigr).
\]
is false in $G$, therefore it is false in $F$. This means that there
exists a set of elements $B$ in $F$ such that $S_1(B,A)=1$ and,
therefore, $S(B,A)=1$ such that $w_1(B,A)\ne 1\wedge\dots\wedge
w_k(B,A)\ne 1$. The map $X\to B$ that is identical on $F$ can be
extended to the $F$-homomorphism from $G$ to $F$.

 1)$\Leftrightarrow$ 3) Let $H$ be an equationally Noetherian CSA-group. We will prove that  $V(S)$ is
  irreducible if and only if $H_{R(S)}$ is discriminated in $H$ by
  $H$-homomorphisms.

  Suppose $V(S)$ is not irreducible and $V(S)=V(S_1)\cup V(S_2)$
  is its decomposition into proper subvarieties.  Then there exist $s_i\in R
  (S_i)\setminus R(S_j), j\ne i$. The set $\{s_1, s_2\}$ cannot be discriminated in $H$ by $H$-homomorphisms.

  Suppose now $s_1,\dots ,s_n$ are elements such that for any retract
  $f\colon H_{R(S)}\to H$ there exists $i$ such that $f(s_i)=1$; then
  $V(S)=\bigcup_{i=1}^mV(S\cup s_i).$ $\ \ \ \Box$

 Sela \cite{Sela1} defined limit groups as follows.
Let $G$ be a f.g. group and let $\{\phi_j\}$ be a sequence of
homomorphisms from $G$ to a free group $F$ belonging to distinct
conjugacy classes (distinct $F$-homomorphisms belong to distinct
conjugacy classes).

$F$ acts by isometries on its Cayley graph $X$ which is a simplicial
tree. Hence, there is a sequence of actions of $G$ on $X$
corresponding to $\{\phi_j\}$.

By rescaling metric on $X$ for each $\phi_j$ one obtains a sequence
of simplicial trees $\{X_j\}$ and a corresponding sequence of
actions of $G$. $\{X_j\}$ converges to a real tree $Y$
(Gromov-Hausdorff limit) endowed with an isometric action of $G$.
The kernel of the action of $G$ on $Y$ is defined as
$$K = \{g \in G \mid g y = y,\ \forall y \in Y\}.$$

Finally, $G / K$ is said to be the {\bf limit} group (corresponding
to $\{\phi_j\}$ and rescaling constants). We will prove now the
equivalence 1)$\Leftrightarrow$ 4). A slight modification of the
proof below should be made to show that limit groups are exactly
f.g. fully residually free groups.

 Suppose that
$G=\langle g_1,\ldots ,g_k\rangle $ is f.g. and discriminated by
$F$. There exists a sequence of homomorphisms $\phi _n:G\rightarrow
F,$ so that  $\phi _n$ maps the elements in a ball of radius $n$ in
the Cayley graph of $G$ to distinct elements in $F$. By rescaling
the metric on $F$, we obtain a subsequence of homomorphisms $\phi
_m$ which converges to an action of a limit group $L$ on a real tree
$Y$. In general, $L$ is a quotient of $G$, but since the
homomorphisms were chosen so that $\phi _n$ maps a ball of radius
$n$ monomorphically into $F$, $G$ is isomorphic to $L$ and,
therefore, $G$ is a limit group.

To prove the converse, we need the fact  (first proved in
\cite{Sela1}) that a f.g. limit group is finitely presented. We may
assume further that a limit group $G$ is non-abelian because the
statement is, obviously, true for abelian groups. By definition,
there exists a f.g. group $H$, an integer $k$ and a sequence of
homomorphisms $h_k:H\rightarrow F$, so that the limit of the actions
of $H$ on the Cayley graph of $F$ via the homomorphisms $h_k$ is a
faithful action of $G$ on some real tree $Y$. Since $G$ is finitely
presented for all but finitely many $n$, the homomorphism $h_n$
splits through the limit group $G$, i.e. $h_n=\phi\psi _n$, where
$\phi: H\rightarrow G$ is the canonical projection map, and the
$\psi _n$'s are homomorphisms $\psi _n:G\rightarrow F$. If $g\neq 1$
in $G$, then for all but finitely many $\psi _n$'s $g^{\psi _n}\neq
1.$ Hence, for every finite set of elements $g_1,\ldots ,g_m\neq 1$
in $G$ for all but finitely many indices $n$, $g_1^{\psi _n},\ldots
,g_m^{\psi _n}\neq 1,$ so $G$ is $F$ discriminated. $\ \ \Box$

The equivalence 2)$\Leftrightarrow$ 6) is a particular case of
general results in model theory (see for example \cite{BS69} Lemma
3.8 Chap.9). $\ \ \Box$

5)$\Leftrightarrow$ 6).
 Champetier and  Guirardel \cite{CG} used another approach to limit
groups.

A {\em marked} group $(G,S)$ is a group $G$ with a prescribed family
of generators $S = (s_1,\ldots,s_n)$.

Two marked groups $(G, (s_1,\ldots,s_n))$ and $(G',
(s'_1,\ldots,s'_n))$ are isomorphic as marked groups if the
bijection $s_i \longleftrightarrow s'_i$ extends to an isomorphism.
For example, $(\langle a \rangle,(1,a))$ and $(\langle a
\rangle,(a,1))$ are not isomorphic as marked groups. Denote by
${\cal G}_n$ the set of groups marked by $n$ elements up to
isomorphism of marked groups.

One can define a metric on ${\cal G}_n$ by setting the distance
between two marked groups $(G, S)$ and $(G',S')$ to be $e^{-N}$ if
they have exactly the same relations of length at most $N$ (under
the bijection $S \longleftrightarrow S'$).

Finally, a limit group in their terminology is a limit (with respect
to the metric above) of marked free groups in ${\cal G}_n$.

It is shown in \cite{CG} that a group is a limit group if and only
if it is a finitely generated subgroup of an ultraproduct of free
groups (for a non-principal ultrafilter), and any such ultraproduct
of free groups contains all the limit groups. This implies the
equivalence 5)$\Leftrightarrow$ 6). $\ \ \Box$

Notice that ultrapowers of a free group have the same elementary
theory as a free group by Los' theorem.

First non-free finitely generated examples of fully residually free
groups, that include all non-exceptional surface groups, appeared in
\cite{Bau62}, \cite{Bau67}. They obtained fully residually free
groups as subgroups of free extensions of centralizers in free
groups.

\subsection{Lyndon's completion $F^{{\mathbb Z}[t]}$}

Studying equations in free groups Lyndon \cite{L} introduced the
notion of a group with parametric exponents in  an associative
unitary ring $R$. It can be defined as a union of the chain of
groups
$$F = F_0 < F_1 < \cdots < F_n < \cdots,$$
where $F = F(X)$ is a free group on an alphabet $X$, and $F_k$ is
generated by $F_{k-1}$ and formal expressions of the type
$$\{ w^\alpha \mid w \in F_{k-1},\ \alpha \in R\}.$$
That is, every element of $F_k$ can be viewed as a {parametric word}
of the type
$$w_1^{\alpha_1} w_2^{\alpha_2} \cdots w_m^{\alpha_m},$$
where $m \in \mathbb{N},\ w_i \in F_{k-1}$, and $\alpha_i \in R$. In
particular, he described free exponential groups $F^{\mathbb{Z}[t]}$
over the ring of integer polynomials $\mathbb{Z}[t]$.   Notice that
ultrapowers of free groups are operator groups over ultraproducts of
$\mathbb{Z}$.

 In the same paper Lyndon proved an amazing result  that $F^{\mathbb{Z}[t]}$ is fully residually free.
 Hence all  subgroups of $F^{\mathbb{Z}[t]}$ are fully residually free.
Lyndon showed that solution sets of one variable equations can be
described in terms of parametric words. Later it was shown in
\cite{Appel} that coordinate groups of irreducible one-variable
equations are just extensions of centralizers in $F$ of rank one
(see the definition in the second paragraph below). In fact, this
result is not entirely accidental, extensions of centralizers play
an important part here. Recall that Baumslag \cite{Bau62} already
used them in proving that surface groups are freely discriminated.

Now, breaking the  natural march of history, we go ahead of time and formulate one crucial result which justifies our  discussion on Lyndon's completion $F^{\mathbb{Z}[t]}$ and highlights the role of the group $F^{\mathbb{Z}[t]}$ in the whole subject.

{\bf Theorem} (The Embedding Theorem \cite{KMIrc},\cite{KM9}) {\em
Given an irreducible system $S = 1$
 over $F$  one can effectively embed the coordinate group $F_{R(S)}$  into
$ F^{Z[t]}$.}

 A modern treatment of exponential groups
was done by Myasnikov and Remeslennikov \cite{MR1} who proved that
the group $F^{\mathbb{Z}[t]}$ can be obtained from $F$ by an
infinite chain of HNN-extensions of a very specific type, so-called
{\it extensions of centralizers}:
 $$F = G_0 <  G_1 < \ldots <  \ldots \cup G_i = F^{Z[t]}$$
  where
   $$G_{i+1} = \langle G_i, t_i \mid [C_{G_i}(u_i),t_i] =  1\rangle.$$
(extension of the centralizer $C_{G_i}(u_i)$, where $u_i\in G_i$).

This implies that finitely generated subgroups of
$F^{\mathbb{Z}[t]}$ are, in fact,
 subgroups of $G_i$. Since $G_i$ in an HNN-extension, one can apply Bass-Serre theory to
 describe the structure of these subgroups. In fact, f.g. subgroups of $G_i$ are fundamental groups of graphs of groups induced by the
 HNN structure of $G_i$. For instance, it is routine
 now to show that  all such subgroups $H$ of $G_i$ are finitely presented. Indeed, we only have to show that the intersections $G_{i-1}\cap H^g$ are finitely
 generated. Notice, that if in the amalgamated product amalgamated subgroups are finitely generated and one of the factors is not, then the alamgamated product is not finitely
 generated (this follows from normal forms of elements in the amalgamated products). Similarly, the base group of a f.g. HNN extension with f.g. associated subgroups
 must be f.g. Earlier Pfander \cite{Pf} proved that f.g. subgroups of the free ${\mathbb Z}[t]$-group on two generators
 are finitely presented.
  Description of f.g. subgroups of $F^{\mathbb{Z}[t]}$ as fundamental groups of graphs of groups implies immediately that such groups  have non-trivial
   abelian splittings (as amalgamated product or HNN extension with abelian edge group which is maximal abelian in one of the base subgroups).
    Furthermore, these groups can be obtained from free groups by finitely many free constructions (see next section).

 The original Lyndon's result on fully residual freeness of $F^{\mathbb{Z}[t]}$  gives decidability of the Word Problem in $F^{\mathbb{Z}[t]}$,
 as well as in all its subgroups.  Since any fully residually free group given by a finite presentation with relations $S$ can be presented as the coordinate group $F_{R(S)}$
 of a coefficient-free system $S=1$. The Embedding Theorem then implies decidability of WP in arbitrary f.g. fully residually residually free group.

   The Conjugacy Problem is also decidable in $F^{\mathbb{Z}[t]}$ - but this was proved much later, by Ribes and Zalesski in \cite{RZ}.
   A similar, but stronger,  result is due to Lyutikova who
   showed in  \cite{Lut} that the Conjugacy Problem  in $F^{\mathbb{Z}[t]}$ is residually free, i.e.,  if two elements $g, h$ are not conjugate in
   $F^{\mathbb{Z}[t]}$ (or in $G_i$) then there is an $F$-epimorphism $\phi: F^{\mathbb{Z}[t]} \to F$  such that $\phi(g)$ and $\phi(h)$ are not
   conjugate in $F$. Unfortunately, this does not imply immediately that the CP in subgroups of $F^{\mathbb{Z}[t]}$ is  also residually free,
   since two elements may be not conjugated in a subgroup $H \leq F^{\mathbb{Z}[t]}$, but conjugated in the whole group $F^{\mathbb{Z}[t]}$.
   We discuss CP in arbitrary f.g. fully res. free groups in Section 5.

\section{Main results in \cite{KMIrc}}

\subsection{Structure and embeddings}

In 1996 we proved  the
converse of the Lyndon's result mentioned above, every finitely
generated fully residually free group is embeddable into
$F^{\mathbb{Z}[t]}$.

\begin{theorem}  \cite{KMIrc},\cite{KM9} Given an irreducible system
$S = 1$
 over $F$  one can effectively embed the coordinate group $F_{R(S)}$  into
$ F^{Z[t]}$ i.e.,   one can find $n$ and an embedding $F_{R(S)}
\rightarrow G_n$ into an iterated centralizer extension $G_n$.
\end{theorem}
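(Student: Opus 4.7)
The plan is to combine the Decomposition Theorem (mentioned in Section 4.2 of the introduction) with an inductive, level-by-level embedding into extensions of centralizers. Since $S=1$ is irreducible, applying the Elimination Process produces a non-degenerate triangular quasi-quadratic (NTQ) system $U=1$ together with an embedding $F_{R(S)} \hookrightarrow F_{R(U)}$. Thus it suffices to embed the coordinate group of an arbitrary NTQ system into some $G_n$, and the effectiveness of both steps reduces to the effectiveness of the Elimination Process.

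An NTQ coordinate group $N = F_{R(U)}$ is built from $F$ by a finite tower
\[
F = N_0 \le N_1 \le \cdots \le N_r = N,
\]
where each $N_{i+1}$ is obtained from $N_i$ by adjoining solutions of one block of $U$ over $N_i$. The non-degeneracy hypothesis guarantees that each block is either (a) a free abelian extension (the variables appear linearly, and the block amounts to extending the centralizer of some element $u\in N_i$), or (b) a standard quadratic equation over $N_i$ having a solution in $N_i$ that generates a free subgroup of maximal rank. I will argue by induction on $i$ that $N_i$ embeds into an iterated centralizer extension of $F$; the base case is trivial and the inductive step splits into the two cases above.

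Case (a) is direct: if $N_i$ is already realized inside some $G_k$, then the centralizer $C_{N_i}(u)$ sits inside $C_{G_k}(u)$, and forming the centralizer extension of $G_k$ along $u$ automatically produces a copy of the free abelian extension of $N_i$ along $u$. Case (b) is the core of the proof, and it is the analogue of Baumslag's classical trick showing that orientable surface groups are freely discriminated: a non-degenerate standard quadratic equation over $N_i$ can be solved in $N_i$ after passing through a finite sequence of centralizer extensions, by using the new stable letters as independent "shifts" that realize each pair of conjugate generators of the quadratic word. The generic solution so obtained induces a map from $N_{i+1}$ into a suitable $G_{k+m}$; to see this map is injective one uses the induced graph-of-groups structure on $N_{i+1}$ (amalgamated product or HNN extension along the maximal abelian subgroup carrying the quadratic relation), combines it with the malnormality coming from the CSA property of fully residually free groups, and applies Bass–Serre theory to check that no nontrivial element collapses.

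The main obstacle is the quadratic step in case (b): one must construct the generic solution explicitly and verify that it is \emph{sufficiently generic} that the naturally induced morphism $N_{i+1} \to G_{k+m}$ is an embedding rather than merely a homomorphism. This is where non-degeneracy of the NTQ system is indispensable — it is precisely what ensures that the relevant edge group in the splitting of $N_{i+1}$ remains malnormal when pushed into $G_{k+m}$, so that the amalgamated/HNN normal form is preserved. Once this single technical point is handled, iteration over the $r$ levels of the tower yields $F_{R(S)} \hookrightarrow N \hookrightarrow G_n$ for some explicit $n$, and effectiveness follows because every ingredient (the Elimination Process, the choice of centralizers, the generic solution) has been produced algorithmically.
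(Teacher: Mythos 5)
Your proposal follows the same two-step architecture the paper itself uses: first invoke the Elimination Process (Theorem 6) to embed $F_{R(S)}$ into an NTQ coordinate group, then embed the NTQ coordinate group into a tower of centralizer extensions by inducting level by level — orientable/non-orientable quadratic blocks via a Baumslag-style generic solution, abelian blocks via direct centralizer extension. The paper simply cites \cite{KMNull} for that second step, so your sketch (including your correct identification of the quadratic block as the genuinely delicate case, where CSA/malnormality plus Bass–Serre normal forms are needed to certify injectivity) fills in exactly the argument the paper defers to its reference.
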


\begin{cor} For every freely indecomposable non-abelian finitely generated fully
residually free group one can effectively find a non-trivial
splitting (as an amalgamated product or HNN extension) over a cyclic
subgroup.\end{cor}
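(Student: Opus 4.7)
The plan is to combine the Embedding Theorem (Theorem 3) with Bass--Serre theory applied to the HNN-extension structure of iterated centralizer extensions. First I would invoke the Embedding Theorem to obtain an effective embedding $G \hookrightarrow G_n$, and choose $n$ minimal with this property. Recall that $G_n = \langle G_{n-1}, t_{n-1} \mid [C_{G_{n-1}}(u_{n-1}), t_{n-1}] = 1 \rangle$, which is the amalgamated product $G_{n-1} *_C (C \times \langle t_{n-1}\rangle)$ for $C = C_{G_{n-1}}(u_{n-1})$. Associated to this splitting is a Bass--Serre tree $T$ on which $G_n$ acts, with edge stabilizers $G_n$-conjugate to $C$ and vertex stabilizers conjugate to $G_{n-1}$ or to $C \times \langle t_{n-1} \rangle$.

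Second, restrict the $G_n$-action on $T$ to $G$. By the minimality of $n$, the subgroup $G$ is not contained in any $G_n$-conjugate of $G_{n-1}$, for otherwise $G$ would already embed into $G_{n-1}$, contradicting the choice of $n$. Hence $G$ has no global fixed vertex in $T$, and Bass--Serre theory produces a non-trivial graph-of-groups decomposition of $G$ in which each edge group has the form $G \cap C^g$. By Proposition~1 these intersections are abelian, finitely generated (Property~2), and torsion-free (Property~1); hence each edge group is of the form $\mathbb{Z}^k$ for some $k \geq 0$. The computation is effective because the embedding is effective and the $G$-translates of vertices of $T$ needed to build the quotient graph of groups can be located algorithmically from normal forms in $G_n$.

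Third, since $G$ is freely indecomposable and non-abelian, no edge group in this decomposition can be trivial: a trivial edge group would express $G$ as a non-trivial free product (or HNN with trivial associated subgroup, i.e., a free product with $\mathbb{Z}$), contradicting free indecomposability. So every edge group is $\mathbb{Z}^k$ with $k \geq 1$; if some edge group is cyclic we are done. Otherwise I would refine an abelian splitting over some $A \cong \mathbb{Z}^k$, $k \geq 2$, to a cyclic one. Here Property~3 (CSA) is crucial: the unique maximal abelian subgroup $M$ of $G$ containing $A$ (Property~2) is malnormal and finitely generated. Picking $c \in M$ whose image generates a $\mathbb{Z}$-summand of $M$, one performs the standard graph-of-groups surgery, sliding a complement of $\langle c \rangle$ into a vertex group and collapsing, to obtain a new non-trivial splitting whose edge group is contained in $\langle c \rangle$. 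Malnormality of $M$ guarantees the resulting decomposition is still a genuine graph of groups, and the construction is effective because $M$ and its direct-summand decomposition can be computed.

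The main obstacle is this last refinement from an abelian to a cyclic edge group. The abelian splitting drops out almost mechanically once the Embedding Theorem and Bass--Serre theory are combined; the subtle point is that a freely indecomposable non-abelian fully residually free group admitting only higher-rank abelian splittings must nevertheless admit a cyclic one. This rests on the specific features of fully residually free groups---finite generation and malnormality (CSA) of maximal abelian subgroups, and the absence of sophisticated abelian-by-something structure that one might encounter in general CSA groups---rather than on soft Bass--Serre arguments alone.
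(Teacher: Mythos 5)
Your overall framework---embed $G$ into an iterated centralizer extension $G_n$ via the Embedding Theorem, restrict the Bass--Serre action to $G$, and use free indecomposability to rule out trivial edge groups---matches the route the paper takes (see the discussion in Section~3.2, where precisely this argument is used to obtain non-trivial \emph{abelian} splittings of f.g.\ subgroups of $F^{\mathbb{Z}[t]}$). Your first two steps are essentially correct: minimality of $n$ does force $G$ to act without a global fixed vertex, the induced edge groups $G \cap C^g$ are f.g.\ free abelian by Proposition~1, and a trivial edge group would violate free indecomposability.

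The genuine gap is in your third step, and you are right to flag it as the crux, but your proposed resolution does not work. There is no ``standard graph-of-groups surgery'' that turns an abelian splitting into a cyclic one by ``sliding a complement of $\langle c\rangle$ into a vertex group and collapsing.'' In a one-edge splitting $G = V_1 *_M V_2$ with $M \cong \mathbb{Z}^k$, $k \geq 2$, the complement $M'$ of $\langle c\rangle$ in $M$ already sits inside both $V_1$ and $V_2$, so ``sliding it into a vertex group'' is vacuous, and amalgamating over $\langle c\rangle$ instead of $M$ produces a \emph{different} (generally larger) group, not $G$. Concretely, in the typical situation that arises here---$V_2 \cong M \times N$ abelian and $V_1$ non-abelian with trivial center---rewriting $G = \langle V_1, N \mid [M,N]=1\rangle$ and attempting to peel off a single generator $s$ of $N$ still yields an HNN-extension over $M \times N'$ (with $N = \langle s\rangle \times N'$), which has rank $\geq k$, not $1$. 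Malnormality of the maximal abelian subgroup controls conjugacy of abelian subgroups but gives you no mechanism to lower the rank of the edge group. The correct passage from abelian to cyclic splittings is more delicate: it proceeds by induction along the tower $F = G_0 < G_1 < \cdots < G_n$ (exploiting that the first extension $G_1 = F *_{\langle u\rangle}(\langle u\rangle \times \mathbb{Z})$ already has cyclic edge group, and that a cyclic splitting of a vertex group $V \leq G_{i}$ can be spliced into the ambient graph of groups when the abelian edge group is elliptic in that splitting), or equivalently through the cyclic JSJ theory of limit groups developed in \cite{KMIrc}; it does not follow from a soft one-step refinement of the kind you describe.
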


\begin{cor}  Every finitely generated fully residually free
group is finitely presented. There is an algorithm  that, given a
presentation of a f.g. fully residually free group $G$ and
generators of the subgroup $H$, finds a finite presentation for $H$.
\end{cor}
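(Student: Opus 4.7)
The plan is to derive both statements from the Embedding Theorem just quoted. Given a f.g. fully residually free group $G$ with a finite presentation $\langle X \mid S\rangle$, we have $G \cong F_{R(S)}$ for the coefficient-free (irreducible) system $S=1$, so by the Embedding Theorem one can effectively find $n$ and an embedding $\iota\colon G \hookrightarrow G_n$ into an iterated centralizer extension $F = G_0 < G_1 < \cdots < G_n$. Thus it suffices to prove: (\ast) every finitely generated subgroup of $G_n$ is finitely presented, effectively so from generators written as words over the $G_n$-alphabet. Applying (\ast) to $\iota(G)$ itself yields finite presentability of $G$; applying it to a subgroup $H\le G$ (with generators first expressed as elements of $G_n$ via $\iota$) yields the relative algorithmic statement.

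Next I would prove (\ast) by induction on $n$. The base $G_0 = F$ is Nielsen--Schreier. For the inductive step, $G_{n+1}$ is an HNN-extension of $G_n$ with stable letter $t_n$ and associated (abelian) subgroup $A = C_{G_n}(u_n)$, which is finitely generated by property (4) of Proposition 1. A f.g.\ subgroup $H \le G_{n+1}$ acts on the Bass--Serre tree of this HNN-extension with quotient a graph of groups whose edge groups are subgroups of conjugates of $A$ and whose vertex groups are intersections $H \cap g G_n g^{-1}$. The edge groups, being subgroups of f.g.\ abelian groups, are automatically finitely generated. Then, as noted in the excerpt, in a finitely generated HNN-extension with finitely generated associated subgroups the base must itself be finitely generated; hence each vertex group $H \cap g G_n g^{-1}$ is f.g., and by the inductive hypothesis finitely presented. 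Gluing the vertex presentations via the finitely many edge relations produces a finite presentation of $H$.

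For the algorithmic refinement, one turns the Bass--Serre reduction into a Stallings-type folding procedure in the tree of $G_{n+1}$: starting from the given generators of $H$, iteratively refine a finite core subgraph of groups, using the explicit normal form in the HNN-extension $G_{n+1}$ (provided by the Myasnikov--Remeslennikov description of $F^{\mathbb{Z}[t]}$) to decide when two edges should be identified and to read off generators of the stabilizers. Since the centralizers $C_{G_n}(u)$ are computable and membership in them is decidable, each fold step is effective. Recursing through the chain $G_n > G_{n-1} > \cdots > F$ and invoking the Nielsen--Schreier algorithm at the bottom yields a finite presentation of $H$ in an algorithmic manner.

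The main obstacle is precisely the effectiveness at each level: controlling the Stallings folding in an iterated centralizer extension. One must bound a priori the finite set of double cosets $H\backslash G_{n+1}/G_n$ that support the vertex groups of the quotient graph, and compute generators of the intersections $H \cap g G_n g^{-1}$ and $H \cap g A g^{-1}$. The subtlety is that edges are attached along arbitrarily complicated f.g.\ abelian subgroups coming from higher levels of the tower, so the folding must be combined with effective linear algebra over the centralizer rings; once this computational core is in place, the inductive argument of (\ast) runs without further difficulty.
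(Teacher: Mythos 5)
Your proposal is correct and takes essentially the same route as the paper: embed into an iterated centralizer extension via the Embedding Theorem, decompose the subgroup using Bass--Serre theory over the HNN splitting of $G_{i}$ over $G_{i-1}$ with finitely generated abelian edge groups, deduce that the vertex groups $H\cap G_{i-1}^{g}$ are finitely generated (this is exactly the paper's remark about finitely generated HNN/amalgamated products with finitely generated associated subgroups), and induct down the tower; the effectivity rests on the constructive Embedding Theorem and the folding machinery of Section~6. One small point you gloss over, as does the paper's sketch: one must pass to the minimal $H$-invariant subtree to obtain a \emph{finite} quotient graph of groups before arguing that the finitely many vertex groups are finitely generated.
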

\begin{cor} Every finitely generated residually free group $G$ is a subgroup of a direct product of finitely
many fully residually free groups; hence, $G$ is embeddable into $
F^{Z[t]}\times\ldots\times F^{Z[t]}$. If $G$ is given as a
coordinate group of a finite system of equations, then this
embedding can be found effectively.
\end{cor}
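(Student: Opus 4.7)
The plan is to realize $G$ as a subdirect product using the irreducible decomposition of the algebraic set defined by a presentation of $G$, and then apply the Embedding Theorem componentwise.

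First I would write $G$ as a coordinate group. Pick a finite generating set $X$ and a presentation $G=F(X)/N$. Residual freeness says that for every $w\in F(X)\setminus N$ there is a homomorphism $\phi\colon G\to F$ with $\phi(w)\ne 1$; composing the quotient map $F(X)\to G$ with $\phi$ produces a point of $V_F(N)$ on which $w$ does not vanish, so $N=R(N)$ in the (coefficient-free) sense of Section~2. Since $F$ is equationally Noetherian, there is a finite $S\subset N$ with $R(S)=R(N)=N$, so $G=F_{R(S)}$.

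Next, again using equational Noetherianity of $F$, decompose the algebraic set $V_F(S)=V_1\cup\cdots\cup V_k$ into its finitely many irreducible components and pick finite systems $S_i$ with $V_i=V_F(S_i)$. The elementary fact that the ``ideal'' $I(V)=\{w: w(\bar a)=1\ \forall \bar a\in V\}$ sends unions to intersections gives $R(S)=R(S_1)\cap\cdots\cap R(S_k)$, so the diagonal map
\[
G \;=\; F(X)/R(S)\;\longrightarrow\;\prod_{i=1}^{k} F(X)/R(S_i)
\]
is well-defined and injective. Each factor $F_{R(S_i)}$ is the coordinate group of an irreducible algebraic set, hence finitely generated and fully residually free by the equivalence $(1)\Leftrightarrow(3)$ in Proposition~2 (the relevant direction is proved inside the proof of Proposition~3). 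This already exhibits $G$ as a subgroup of a finite direct product of f.g.\ fully residually free groups. Applying the Embedding Theorem (Theorem~3) to each factor then yields
\[
G\;\hookrightarrow\; F^{\mathbb{Z}[t]}\times\cdots\times F^{\mathbb{Z}[t]}.
\]

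For the effective assertion, assume $G$ is given as $F_{R(S)}$ with $S$ finite. Every step above is routine once finite systems $S_1,\dots,S_k$ defining the irreducible components of $V_F(S)$ have been produced: the diagonal embedding is then written down from the presentations of the $F_{R(S_i)}$, and the effective form of the Embedding Theorem completes the construction. Consequently the main obstacle is the effective irreducible decomposition of $V_F(S)$. This is precisely what the Elimination Process of \cite{KMIrc,KM9} provides: branching the process on $S$ and pruning redundant branches produces a Makanin--Razborov-type diagram whose surviving leaves label the irreducible components and come equipped with explicit finite defining systems $S_i$, closing the argument.
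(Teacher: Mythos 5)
Your proposal is correct and follows essentially the same route as the paper: write $G$ as the coordinate group $F_{R(S)}$ of a finite coefficient-free system (using residual freeness to get $N=R(N)$ and equational Noetherianity to pass to a finite $S$), decompose $V_F(S)$ into irreducible components so that $R(S)=\bigcap_i R(S_i)$, embed $G$ diagonally into $\prod_i F_{R(S_i)}$, and apply the Embedding Theorem to each fully residually free factor, with effectiveness coming from the algorithm for irreducible decomposition (Theorem~7). You have simply spelled out the steps that the paper compresses into two sentences.
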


Indeed, there exists a finite system of coefficient free equations
$S=1$ such that $G$ is a coordinate group of this system, and
$ncl(S)=R(S)$. If $V(S)=\cup _{i=1}^nV(S_i)$ is a representation of
$V(S)$ as a union of irreducible components, then $R(S)=\cap
_{i=1}^n R(S_i)$ and $G$ embeds into a direct product of coordinate
groups of systems $S_i=1$, $i=1,\ldots ,n.$

 This allows one to study the coordinate
groups of irreducible systems of equations (fully residually free
groups) via their splittings into graphs of groups. This also
provides a complete description of finitely generated
 fully residually free
groups and gives a lot of information about their algebraic
structure.  In particular, they act freely on $\mathbb{Z}^n$-trees,
and all these groups, except for abelian and surface groups, have a
non-trivial cyclic JSJ-decomposition.

Let $K$ be an HNN-extension of a group $G$ with associated subgroups
$A$ and $B$. $K$ is called a separated HNN-extension if for any
$g\in G$, $A^g\cap B=1$.

\begin{cor} Let a group $G$ be obtained from a free group $F$
by finitely many centralizer extensions. Then every f. g. subgroup
$H$ of $G$ can be obtained from free abelian groups of finite rank
by finitely many operations of the following type: free products,
free products with abelian amalgamated subgroups at least one of
which is a maximal abelian subgroup in its factor, free extensions
of centralizers, separated HNN-extensions with abelian associated
subgroups at least one of which is maximal. \end{cor}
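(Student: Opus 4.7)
I would proceed by induction on the number $n$ of centralizer extensions used to build $G$ from $F = G_0$. The base case $n = 0$ reduces to the Nielsen--Schreier theorem: finitely generated subgroups of the free group $F$ are free, and hence obtained from the trivial group by iterated free products. For the inductive step, view $G_n = \langle G_{n-1}, t \mid [C,t] = 1\rangle$ as an HNN extension of $G_{n-1}$ with associated subgroup $C = C_{G_{n-1}}(u_{n-1})$. By Lyndon's theorem $G_{n-1}$ is fully residually free, so by Proposition~1 it is CSA with all abelian subgroups finitely generated; in particular $C$ is a finitely generated maximal abelian subgroup of $G_{n-1}$, malnormal in $G_{n-1}$.

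Next I would apply Bass--Serre theory to the action of $H$ on the Bass--Serre tree $T$ of this HNN decomposition. After passing to the minimal $H$-invariant subtree, the quotient is a graph of groups $\mathcal{H}$ with vertex groups of the form $H \cap G_{n-1}^{g_v}$ and edge groups of the form $H \cap C^{g_e}$. Each edge group sits inside the finitely generated abelian group $C^{g_e}$, so is itself finitely generated abelian; since $H$ is finitely generated and the edge groups are finitely generated, $\mathcal{H}$ is a finite graph of groups. The inductive hypothesis applied to each vertex group, a finitely generated subgroup of the $(n-1)$-fold centralizer extension $G_{n-1}$, gives the claimed decomposition for vertex groups.

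Finally I would reconstruct $H$ from $\mathcal{H}$ via a spanning tree: spanning-tree edges yield amalgamated products, the remaining edges yield HNN extensions, and trivial edge groups give free products. For each nontrivial edge group $E = H \cap C^{g_e}$, malnormality of $C^{g_e}$ in $G_{n-1}^{g_e}$ forces $E$ to be malnormal, hence maximal abelian, in each adjacent vertex group, which supplies the required maximality for the amalgamated-product steps. For HNN steps with associated subgroups $E_1, E_2$ in a vertex group $V$ (both maximal abelian by the previous sentence), I would invoke the CSA dichotomy: either $E_1$ and $E_2$ are not conjugate in $V$, in which case $v^{-1} E_1 v \cap E_2 = 1$ for every $v \in V$ and the step is a \emph{separated} HNN extension with a maximal abelian associated subgroup; or $E_1$ and $E_2$ are conjugate in $V$, and reparametrizing the stable letter converts the step into a free rank-one extension of centralizer. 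The main obstacle is this last dichotomy: verifying that every HNN contribution falls into exactly one of the two allowed types, which depends critically on the CSA property inherited by each vertex group via Proposition~1.
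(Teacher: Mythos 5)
Your route is the one the paper itself indicates (Section~3.2 sketches it): regard $G_n=\langle G_{n-1},t\mid[C,t]=1\rangle$ as an HNN extension of $G_{n-1}$ over the centralizer $C=C_{G_{n-1}}(u_{n-1})$, let $H$ act on the Bass--Serre tree, and induct on the number of extensions. Your CSA dichotomy for the non-tree edges is also correct and is exactly what makes the list of operations in the statement come out: non-conjugate $E_1,E_2$ give a separated HNN-extension (since, $E_1,E_2$ being malnormal abelian, $E_1^v\cap E_2\neq1$ would force $E_1^v=E_2$), while conjugate $E_1,E_2$, after moving the stable letter $s$ into the normalizer of $E_1$, give an extension of centralizer. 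In the latter case you should make one point explicit: normalizing forces centralizing because $G_n$ is itself fully residually free, hence CSA and commutation transitive, so for $1\neq e\in E_1$ the commuting pair $ses^{-1},e\in E_1$ forces $[s,e]=1$.

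The one genuine gap is finite generation of the vertex groups $H\cap G_{n-1}^{g_v}$. You invoke the inductive hypothesis on them as ``finitely generated subgroups of $G_{n-1}$,'' but finiteness of the quotient graph does not by itself give this. You need the normal-form fact the paper records in Section~3.2: in a finite graph of groups with finitely generated edge groups, if the fundamental group is finitely generated then so is each vertex group (if a factor of an amalgam, or the base of an HNN extension, with finitely generated edge groups were not finitely generated, the whole group could not be). Once that line is inserted your induction closes, and the spanning-tree reconstruction delivers exactly the operations in the statement.
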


\begin{cor}(Groves, Wilton \cite{GW}) One can enumerate all finite presentations of fully
residually free groups.
\end{cor}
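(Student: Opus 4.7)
The plan is to produce a recursive enumeration of finite presentations of finitely generated subgroups of Lyndon's group $F^{\mathbb{Z}[t]}$ and then close this list under Tietze transformations. This suffices because, by Theorem~3, every f.g.\ fully residually free group embeds into some iterated centralizer extension $G_n$, and conversely Lyndon's theorem (Section~3.2) guarantees that every f.g.\ subgroup of $F^{\mathbb{Z}[t]}$ is itself fully residually free; hence the isomorphism types appearing on our list are exactly those we wish to present.

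To carry this out, I would first enumerate, uniformly in $n$ and in the choices of defining elements $u_0,\dots,u_{n-1}$, finite presentations of the iterated centralizer extensions
\[
G_{i+1} \;=\; \langle G_i,\, t_i \mid [C_{G_i}(u_i),\, t_i] = 1 \rangle, \qquad G_0 = F.
\]
The only non-trivial ingredient in the inductive step is computing a finite generating set for $C_{G_i}(u_i)$: centralizers in $F$ are cyclic and found by Nielsen's method, and in each successive $G_i$, presented as an HNN extension with finitely generated abelian edge groups (Proposition~1), Bass--Serre theory applied recursively yields an algorithm for these centralizers. Given the resulting presentations of the $G_n$, I would then run through all finite tuples $(g_1,\ldots,g_k)$ of words in the generators of each $G_n$ and feed the pair $(G_n, \{g_1,\ldots,g_k\})$ into the algorithm of Corollary~2 to obtain a finite presentation of $H = \langle g_1,\ldots,g_k\rangle$. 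Running all of this in parallel produces the desired recursively enumerable list.

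The step I expect to require the most care is the effectivity claim, both for centralizers in a general $G_i$ and for the subgroup presentation procedure underlying Corollary~2. Both rest on the same structural input: a finitely generated subgroup of an iterated centralizer extension is the fundamental group of a finite graph of groups with finitely generated abelian edge groups, one of which is maximal in its vertex group — this is the content of Corollary~5 together with the analysis in~\cite{KMIrc}. I would not re-derive that machinery; assuming it, the intersections of edge groups with conjugates of vertex subgroups arising in any subgroup computation are f.g.\ abelian and can be found effectively, and a finite presentation of $H$ is then read off the induced graph of groups, which makes the overall enumeration recursively enumerable as required.
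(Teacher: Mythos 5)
Your proof is correct and follows the derivation implicit in the paper's structure, even though the paper itself does not spell out an argument (it attributes the result to Groves--Wilton and lists it as a corollary of the Embedding Theorem). The two sides of your equivalence are exactly the right inputs: Theorem~3 (together with the remark that any f.g.\ fully residually free group is $F_{R(S)}$ for a coefficient-free irreducible $S$) gives that every f.g.\ fully residually free group is a subgroup of some $G_n$, and Lyndon's theorem together with Proposition~1(2) gives the converse. Enumerating over all $n$, all choices of nontrivial $u_i$ (nontriviality is checkable since the word problem in each $G_i$ is decidable), and all finite tuples of generators of subgroups, and then invoking the algorithm of Corollary~2 to write down a finite presentation of each subgroup, produces a recursively enumerable list hitting every isomorphism type exactly once up to Tietze equivalence; closing under Tietze transformations --- a recursively enumerable saturation --- then yields all finite presentations of fully residually free groups.

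The part you flag as needing care, the effective computation of $C_{G_i}(u_i)$, is indeed the only nontrivial algorithmic ingredient and is handled correctly: since each $G_i$ is CSA, centralizers of nontrivial elements are the maximal abelian subgroups, which by Proposition~1(4) are finitely generated, and the graph-of-groups structure of $G_i$ (via Corollary~5 and the underlying analysis in \cite{KMIrc}) makes them computable; the same effectivity is also recorded in Theorem~14 of the paper. One genuinely different route, closer to what Groves--Wilton actually do, is to semi-decide directly whether a given finite presentation defines a limit group by searching for a witness to the constructibility criterion of Corollary~5, rather than enumerating isomorphism types and closing under Tietze moves; the two approaches give the same recursively enumerable set but emphasize different structural inputs. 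Your version leans more heavily on the effective embedding into $F^{\mathbb{Z}[t]}$ and the subgroup-presentation algorithm, which is the natural choice given the placement of the corollary in this paper.
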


Theorem 3 is proved as a corollary of Theorem 6 below.

\begin{cor} Every f.g. fully residually free group acts freely on some  $\mathbb{Z}^n$-tree with lexicographic order for a suitable $n$.\end{cor}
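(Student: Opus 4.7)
The plan is to combine the Embedding Theorem (Theorem 3) with an inductive construction of $\Lambda$-tree actions for iterated centralizer extensions. By Theorem 3, any finitely generated fully residually free group $H$ embeds into some $G_n$ in the tower $F = G_0 < G_1 < \cdots < G_n$, where each step is a centralizer extension $G_{i+1} = \langle G_i, t_i \mid [C_{G_i}(u_i), t_i] = 1 \rangle$. Since a subgroup of a group acting freely on a $\Lambda$-tree itself acts freely on the same $\Lambda$-tree (by restriction), it suffices to produce, for each $i$, a free action of $G_i$ on a $\mathbb{Z}^{N_i}$-tree with lexicographic order, for some $N_i$ depending on $i$.

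I would proceed by induction on $i$. The base case is $G_0 = F$, which acts freely by left translation on its own Cayley graph with respect to a free basis, a simplicial tree, i.e., a $\mathbb{Z}$-tree. For the inductive step, suppose $G_i$ acts freely on a $\mathbb{Z}^{N_i}$-tree $T_i$ with lexicographic order. Since $u_i \in G_i$ acts freely, it is a hyperbolic isometry of $T_i$ with a unique axis $L_i \subset T_i$, and the centralizer $C_{G_i}(u_i)$ stabilizes $L_i$ setwise. Because the action is free, the restriction of $C_{G_i}(u_i)$ to $L_i$ embeds into the group of translations of $L_i$, which is a subgroup of $\mathbb{Z}^{N_i}$; hence $C_{G_i}(u_i)$ is free abelian of some rank $r$.

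The construction of the extended tree $T_{i+1}$ proceeds by ``blowing up'' the axis $L_i$: set $\Lambda_{i+1} = \mathbb{Z} \oplus \mathbb{Z}^{N_i}$, ordered lexicographically with the new $\mathbb{Z}$ factor taking priority (so $t_i$, which commutes with $C_{G_i}(u_i)$ but acts non-trivially, has translation length in the new top coordinate). Equivariantly replace each $G_i$-translate of $L_i$ by a copy of $L_i \times \mathbb{Z}$ viewed as a $\Lambda_{i+1}$-tree, where $t_i$ acts by translation in the new $\mathbb{Z}$-direction and $C_{G_i}(u_i)$ acts diagonally. One then verifies, using the Bass-Serre tree of the HNN extension together with the original $T_i$-structure on the vertex groups, that the result is a $\Lambda_{i+1}$-tree on which $G_{i+1}$ acts by isometries, and that the action is free (freeness on each ``old'' part follows from the inductive hypothesis; freeness on the new segments follows because non-trivial words in $t_i$ shift by a non-zero amount in the top $\mathbb{Z}$ coordinate). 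Setting $N_{i+1} = N_i + 1$ completes the induction, and taking $n = N_n + 1$ (or simply the final $N_n$) yields the corollary.

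The main obstacle is the inductive step: making precise the ``blow-up'' of the axis of $u_i$ into a product with a $\mathbb{Z}$-line and equivariantly gluing back the rest of $T_i$, while checking the axioms of a $\Lambda$-tree (geodesic uniqueness, tripod condition) and freeness of the combined action. This is essentially the statement that centralizer extensions by $\mathbb{Z}$ preserve the class of groups acting freely on $\Lambda$-trees at the cost of enlarging $\Lambda$ by one lexicographic level; once this key lemma is established, iteration along the tower $G_0 < G_1 < \cdots < G_n$ and restriction to $H \leq G_n$ finish the proof.
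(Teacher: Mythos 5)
Your proposal takes the same route as the paper: embed $H$ into an iterated centralizer extension $G_n$ via the Embedding Theorem (Theorem 3), then show by induction on $i$ that each $G_i$ acts freely on a $\mathbb{Z}^{N_i}$-tree with lexicographic order, the new $\mathbb{Z}$-factor being prepended as the most significant coordinate at each centralizer-extension step, and finally restrict the $G_n$-action to $H$. The inductive ``blow-up'' of the axis via the Bass--Serre tree of the splitting $G_{i+1}=G_i*_{C_{G_i}(u_i)}(C_{G_i}(u_i)\times\langle t_i\rangle)$ is exactly the standard construction the paper implicitly relies on, so the proposal is correct in outline and matches the paper's approach.
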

Hence, a simple application of the change of the group functor shows
that $H$ also acts  freely on an
 $\mathbb{R}^n$-tree.   Recently, Guirardel proved the latter  result independently using different techniques \cite{Gui}.
 It is worthwhile  to mention here that free group  actions on $\mathbb{Z}^n$-trees give a tremendous amount of information on the group
 and its subgroups, especially with regard to various algorithmic problems (see Section 5).

    Notice, that there are f.g. groups acting freely on $\mathbb{Z}^n$-trees which are not fully residually free (see conjecture (2) from Sela's list
    of open problems). The simplest example is the group of closed non-orientable surface of genus 3.
    In fact, the results in \cite{KMSa, KMSb} show that there are very many groups like that -  the class of groups acting freely on
    $\mathbb{Z}^n$-trees is much wider than the class of fully residually free groups.
    This class deserves a separate discussion, for which we refer to \cite{KMSa, KMSb}.
 Combining Corollary 4 with the results from \cite{KMComb} or \cite{BFComb} we proved in \cite{KMIrc} that f.g. fully residually free groups
 without subgroups $\mathbb{Z} \times \mathbb{Z}$ (or equivalently, with cyclic maximal abelian subgroups)  are hyperbolic.
 We will see in Section 4.2 that this has some implication on the structure of the models of the  $\forall \exists$-theory of a given
 non-abelian free group.  Later,  Dahmani \cite{Dah} proved a generalization of this, namely, that an arbitrary f.g. fully residually free group is  hyperbolic relative to its  maximal abelian non-cyclic subgroups.

 Recently  N. Touikan described coordinate groups of  two-variable equations \cite{Toui}.

\subsection{Triangular quasi-quadratic systems}

We use an Elimination Process to transform systems of equations.
Elimination Process EP is a symbolic rewriting process of a certain
type that transforms formal systems of equations in groups or
semigroups. Makanin (1982) introduced the initial version of the EP.
This  gives a decision algorithm to verify consistency of a given
system - {\em
 decidability of the Diophantine problem} over free groups.
He estimates  the length of the minimal solution (if it exists).
Makanin introduced the fundamental  notions: generalized equations,
elementary and entire transformations, notion of complexity.
 Razborov (1987) developed EP much further.
 Razborov's EP produces {\em
 all  solutions} of a given system in $F$.
He used special groups of automorphisms, and fundamental sequences
to encode solutions.

We obtained in 1996 \cite{KMIrc} an effective description of
solutions of equations in free (and fully residually free ) groups
in terms of very particular {\em triangular systems} of equations.
First, we give a definition.

{\bf  Triangular quasi-quadratic (TQ)} system is a finite system
that has the following form

\medskip

$S_1(X_1, X_2, \ldots, X_n,A) = 1,$

\medskip
$\ \ \ \ \ S_2(X_2, \ldots, X_n,A) = 1,$

$\ \ \ \ \ \ \ \ \ \  \ldots$

\medskip
$\ \ \ \ \ \ \ \ \ \ \ \ \ \ \ \ S_n(X_n,A) = 1$

\medskip \noindent
 where either $S_i=1$ is quadratic in variables $X_i$, or $S_i=1$ is a system of commutativity equations for all variables from $X_i$
 and, in addition, equations $[x,u]=1$ for all $x\in X_i$ and some
 $u\in F_{R(S_{i+1},\ldots ,S_n)}$ or $S_i$ is empty.

 A TQ system above is { non-degenerate ( NTQ)} if for every $i$,
   $S_i(X_i, \ldots, X_n,A) = 1$ has a
  solution in the coordinate group $F_{R(S_{i+1}, \ldots, S_n)}$,
  i.e.,
$S_i = 1$ (in algebraic geometry one would say that a solution
exists in a generic point of the system $S_{i+1} = 1, \ldots,
  S_n = 1$).

We proved in \cite{KMNull} (see also \cite{KM9}) that {\em NTQ
systems are irreducible} and, therefore, their coordinate groups
(NTQ groups) are fully residually free. (Later Sela called NTQ
groups {\em $\omega$-residually free towers} \cite{Sela1}.)

We represented a solution set of a system of equations canonically
as a union of solutions of
 a finite family of NTQ groups.

\begin{theorem}\cite{KMIrc}, \cite{KM9} One can effectively construct EP that starts on an arbitrary system
     $$S(X,A)  = 1$$
      and results in finitely many   NTQ systems
    $$U_1(Y_1) =1, \ldots, U_m(Y_m) = 1$$
     such that
   $$V_F(S) = P_1(V(U_1)) \cup \ldots \cup P_m((U_m))$$
     for some word mappings $P_1, \ldots, P_m$. ($P_i$ maps a tuple $\bar Y_i\in V(U_i)$ to a tuple $\bar X\in V(S)$. One can think about
     $P_i$ as an $A$-homomorphism from $F_{R(S)}$into $F_{R(U_i)}$, then any solution $\psi :F_{R(U_i)}\rightarrow F$ pre-composed with $P_i$ gives a
     solution $\phi: F_{R(S)}\rightarrow F$. )
\end{theorem}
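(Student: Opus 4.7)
The plan is to prove Theorem 6 by setting up a Makanin-Razborov style elimination process tailored to produce NTQ structure, and then tracking solutions along the branches of the resulting tree.

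First I would translate the group equation $S(X,A) = 1$ into a combinatorial object, a \emph{generalized equation} $\Omega$, in the sense of Makanin. This is a finite interval carrying a collection of bases (pairs of matched subintervals) and boundary connections encoding where variables begin and end. Solutions of $\Omega$ (assignments of words to intervals compatible with the bases) correspond to solutions of $S = 1$ in $F$; since one system may require several generalized equations, one obtains a finite disjoint list $\Omega^{(1)}, \ldots, \Omega^{(r)}$. This step is essentially bookkeeping once Makanin's encoding is in hand, so I would only sketch it.

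Next I would define the rewriting process on generalized equations. Following \cite{KMIrc}, I would use the same repertoire of entire and elementary transformations as in the Makanin-Razborov process, but branch on the type of base that is being eliminated: (i) a quadratic part generates a quadratic equation; (ii) a matching base that is not quadratic and is carried through unchanged contributes nothing on its own; and (iii) a \emph{periodic} situation, where a long base is covered by a short one, triggers an extension-of-centralizer step, producing a commutation system of the form $[x,u] = 1$ with $u$ an element of the coordinate group built from subsequent levels. Each branching is labelled by an explicit word mapping recording how solutions at the child level recover solutions at the parent. Running the process yields a finite branching tree $T$; the composition of the labels along a root-to-leaf path is the word map $P_i$.

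The main obstacle is showing that the process terminates and that the system read off along each branch is NTQ rather than merely TQ. For termination I would use the Kharlampovich-Myasnikov complexity (a refinement of Makanin's complexity together with the number of quadratic-free segments and periodicity counters) and verify that it strictly decreases at every non-cosmetic branching, so each path is finite; since the branching degree is bounded, König's lemma gives a finite tree. For non-degeneracy, at each level $S_i = 1$ I must exhibit a solution inside $F_{R(S_{i+1}, \ldots, S_n)}$. This is the heart of the argument: one proves that for quadratic blocks, the generic solution is provided by the free basis of the lower tower (via the standard automorphisms of quadratic equations, this is arranged by choosing minimal solutions), and for centralizer-extension blocks the new variable is realized by the stable letter of the extension of $C(u)$ inside the tower, which is a solution by construction. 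One then verifies that the $S_i$ that survive along a terminating branch are exactly of quadratic or commutation type.

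Finally, for $V_F(S) = \bigcup_i P_i(V(U_i))$, the inclusion $\supseteq$ is immediate from the fact that the labels $P_i$ are built from $A$-homomorphisms $F_{R(S)} \to F_{R(U_i)}$. For $\supseteq$ to $\subseteq$ one must show every solution $\phi\colon F_{R(S)} \to F$ factors through some leaf: this is obtained by reading $\phi$ into $\Omega$, letting $\phi$ dictate which transformation to follow at each branching (the solution naturally selects exactly one child), invoking termination to reach a leaf, and observing that $\phi$ is thereby expressed as $\psi \circ P_i$ for some $\psi\colon F_{R(U_i)} \to F$. This last step closes the argument and yields the Decomposition Theorem, with the Embedding Theorem 3 then following as a corollary by embedding each NTQ coordinate group into $F^{\mathbb{Z}[t]}$ level by level via the iterated centralizer extensions.
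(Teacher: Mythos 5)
Your proposal follows the paper's general strategy, but there is a genuine gap in how you handle termination. You assert that one should ``verify that it [the complexity] strictly decreases at every non-cosmetic branching, so each path is finite.'' This is precisely what fails: the entire transformation sequence can run indefinitely along a branch without any decrease in complexity (and without any increase in the number of bases or items), which is the whole reason the elimination process is delicate. The paper's resolution is Theorem~9: for a solution that is \emph{minimal with respect to the group of canonical automorphisms} attached to the abelian (and quadratic) splittings that arise, one can effectively bound the number of entire-transformation steps. One then replaces each potentially infinite path in $T(\Omega)$ by a loop encoding the action of these canonical automorphisms followed by a bounded sequence of transformations, arriving at a vertex $v_j$ where either the complexity has dropped or $F_{R(\Omega_{v_j})}$ is a \emph{proper quotient} of $F_{R(\Omega_{v_{i-1}})}$. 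Finiteness of the resulting tree $T_{sol}(\Omega)$ then rests on the fact that any properly descending chain of residually free quotients is finite (equational Noetherianity), not on a monotone complexity function. You do gesture at minimal solutions in passing for the quadratic blocks, but you never identify the shortening argument as the engine of termination, and your explicit termination claim contradicts it.

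A secondary imprecision: the NTQ system read off along a branch is not obtained simply by stacking ``quadratic / matched / periodic'' equations. In the paper, at each vertex where a canonical automorphism loop appears, one assembles $S_2(H_1,H_2,H_4,H^\pi,A)=1$ from three sources tied to the JSJ-type splitting of $F_{R(\Omega_{v_{i-1}})}$: (i) the quadratic equations on the items $H_1$ in the quadratic part; (ii) commutation equations $[x_i,x_j]=1$, $[x_i,u^\pi]=1$ for abelian vertex groups; and (iii) commutation equations $[y,u^\pi]=1$ obtained from HNN stable letters after the substitution $x = x^\pi y$. It is this substitution that converts the conjugation equation of an HNN splitting into a commutation equation and makes the system genuinely triangular quasi-quadratic. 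Your ``extension-of-centralizer step produces a commutation system'' description captures the end result but skips the substitution that makes non-degeneracy (a solution over the coordinate group of the lower levels, namely $\sigma\pi$ itself after the change of variables) and the word-map $P_i$ work out. The $\supseteq/\subseteq$ argument you sketch for $V_F(S)=\bigcup P_i(V(U_i))$ is correct in outline, provided the tree is actually finite, which again brings you back to the missing shortening argument.
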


    Our elimination process can be viewed as a
{ non-commutative  analog  of the  classical elimination process in
algebraic geometry}.

  Hence, going "from
 the  bottom to the top" every solution
  of the subsystem $S_n = 1, \ldots S_i = 1$ can be extended to a solution of the next
  equation $S_{i-1} = 1$.

\begin{theorem}\cite{KMIrc}, \cite{KM9}
  All solutions of the system of equations $S=1$ in $F(A)$
 can be effectively represented as homomorphisms from $F_{R(S)}$ into $F(A)$ encoded
 into the following finite canonical Hom-diagram. Here all groups, except, maybe, the one in the root, are fully residually free,
 (given by a finite presentation) arrows pointing down
 correspond to epimorphisms (defined effectively in terms of generators) with non-trivial kernels, and loops
 correspond to automorphisms of the coordinate groups.

\let\Om\Omega
\let\si\sigma

$$
\xymatrix@C=0.2cm{
& & F_{R(S)} \ar[dl] \ar[dr] \ar[drrrrr] \\
& F_{R(\Om_{v_1})} \ar@(ur,ul)
 []_{\si_1} \ar[dl] \ar[dr]
&& F_{R(\Om_{v_2})} && \cdots && F_{R(\Om_{v_n})} \\
F_{R(\Om_{v_{21}})} &
\cdots & F_{R(\Om_{v_{2m}})} \ar[dl] \ar[dr] \ar@(ur,ul)[]_{\si_2} \\
&& \cdots & \\
&\cdots& F_{R(\Om_{v_k})} \ar[d] \\
&& F(A) * F(T) \ar[d] \\
&& F(A) }
$$
\end{theorem}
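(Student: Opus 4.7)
The plan is to bootstrap this statement from Theorem 6 by iterating its NTQ decomposition level by level inside each NTQ system. First I would apply Theorem 6 to $S=1$ to obtain finitely many NTQ systems $U_1=1,\dots,U_m=1$ together with word mappings $P_i$ such that $V_F(S)=\bigcup_{i=1}^m P_i(V(U_i))$. Each $P_i$ determines an $A$-homomorphism $F_{R(S)}\to F_{R(U_i)}$, and every solution $\phi:F_{R(S)}\to F(A)$ factors as $\psi_i\circ P_i$ for some $i$ and some $\psi_i\in \mathrm{Hom}_A(F_{R(U_i)},F(A))$. These $P_i$ provide the first layer of downward epimorphisms in the diagram, and since each $F_{R(U_i)}$ is an NTQ group it is fully residually free and finitely presented.

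Next I would peel off the top level of a single NTQ system $U=(S_1,\dots,S_n)$. The equation $S_1$ is quadratic in $X_1$ (or a commutativity/centralizer system), so the coordinate group $F_{R(U)}$ carries a canonical group of $A$-automorphisms $\sigma$ built from Dehn twists along the essential curves of the quadratic surface (respectively, from $\mathrm{GL}$-type automorphisms of the abelian block); these automorphisms form the loops $\sigma_j$ at the node $F_{R(\Omega_{v_j})}$. For any solution $\psi$ of $U=1$, apply the elimination process to the induced solution of $S_1=1$ over $F_{R(S_2,\dots,S_n)}$: replacing $\psi$ by a $\sigma$-minimal representative in its automorphic orbit produces additional relations, so that the minimal $\psi$ factors through a proper $A$-quotient of $F_{R(U)}$. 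Each such proper quotient is again (by Theorem 6 applied relative to the NTQ structure) a finite union of NTQ coordinate groups of strictly lower complexity, yielding the next layer of downward arrows with non-trivial kernels.

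Iterating this peeling, one obtains a finite rooted tree whose internal vertices are NTQ coordinate groups (all fully residually free, finitely presented by Corollary 3), whose downward edges are effectively computable $A$-epimorphisms with non-trivial kernel, and whose loops are the canonical automorphism groups $\sigma_j$ of each level. The leaves are $F(A)\ast F(T)$, where $F(T)$ absorbs the free parameters that arise when the residual system at the bottom is trivial; the final projection to $F(A)$ is obtained by specialising $T$. An arbitrary $\phi\in\mathrm{Hom}_A(F_{R(S)},F(A))$ is then reconstructed by choosing a root-to-leaf path, precomposing along each edge with a suitable automorphism from the corresponding loop, and finally specialising the free parameters, which is exactly the Hom-diagram encoding claimed in the theorem.

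The main obstacle is \emph{finiteness} of this tree, i.e.\ termination of the recursion. One has to exhibit a well-founded complexity measure on NTQ systems — essentially a lexicographic combination of the depth $n$, the genus and number of boundary components of each quadratic block, the rank of each abelian block, and the sizes of the extended centralizers — and show that it strictly decreases when one passes from a node to any of its children produced by the elimination process. Establishing this requires the full machinery of the Makanin--Razborov/KM elimination process: tracking how generalized equations transform under elementary and entire transformations, and bounding the number of orbits of minimal solutions under the automorphism groups $\sigma_j$. Given such a measure, the rest of the statement (effectiveness of the epimorphisms, non-triviality of their kernels, full residual freeness of the internal nodes, and the fact that every solution is captured) follows from Theorem 6 and the structural properties of NTQ groups recalled in the excerpt.
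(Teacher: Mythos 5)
Your proposal inverts the logical order of the paper's argument, which creates a circularity problem. In the paper, Theorems~4 and~5 are proved \emph{together}, directly from the elimination process: one constructs the solution tree $T(\Omega)$ from generalized equations, elementary transformations, and entire transformations, then replaces each potentially infinite branch of entire transformations by a loop of canonical automorphisms plus a finite sequence of steps for a \emph{minimal} solution (Theorem~9, the shortening argument). The resulting finite graph $T_{sol}(\Omega)$ \emph{is} the Hom-diagram of Theorem~5, and the NTQ systems of Theorem~4 are then read off from the levels of this tree, not the other way around. You propose to start from the NTQ decomposition of Theorem~4 and peel its levels; but the paper's only access to Theorem~4 is \emph{through} the very Hom-diagram you are trying to build, so your bootstrap would need an independent proof of the NTQ decomposition — which is not in evidence.

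There is also a concrete gap in your termination argument. You posit ``a well-founded complexity measure on NTQ systems'' combining depth, genus, number of boundary components, ranks of abelian blocks, etc. That is not how the paper terminates. The paper's argument is a dichotomy at each automorphism-loop vertex: after minimizing and continuing the process, either the complexity of the \emph{generalized equation} strictly drops, or the next coordinate group is a \emph{proper quotient} of the current one; and then one invokes the descending chain condition on residually free quotients (equational Noetherianity of free groups) to conclude that the second alternative can only occur finitely often. Your sketch does not mention proper quotients or equational Noetherianity at all, and it is precisely this chain condition — not any lexicographic numerical measure on NTQ data — that shuts down the recursion. Without it your measure has no reason to strictly decrease when a proper quotient appears at unchanged complexity, which is exactly the case the paper has to handle.

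Beyond that, the outline is reasonable in spirit: the role of canonical Dehn twists and abelian automorphisms as the loops, the factorization of a minimal solution through a proper quotient, and the appearance of $F(A)\ast F(T)$ at the leaves all match the paper. But to make the argument go through you would need either to recast it entirely at the level of generalized equations (as the paper does), or to accept Theorem~4 as a genuine black box proved by other means and then supply the equational-Noetherianity termination argument in place of your heuristic complexity measure.
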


A family of homomorphisms encoded in a path from the root to a leaf
of this tree is called a {\em fundamental sequence} or {\em
fundamental set}  of solutions (because each homomorphism in the
family is a composition of a sequence of automorphisms and
epimorphisms). Later Sela called such family a {\em resolution}.
Therefore the solution set of the system $S=1$ consists of a finite
number of fundamental sets. And each fundamental set "factors
through"  one of the NTQ systems from Theorem 4. If $S=1$ is
irreducible, or, equivalently, $G=F_{R(S)}$ is fully residually
free, then, obviously, one of the fundamental sets discriminates
$G$. This gives the following result.

\begin{theorem} \cite{KMIrc}, \cite{KM9}   Finitely generated fully residually free groups are subgroups of
coordinate groups of NTQ systems. There is an algorithm to construct
an embedding.
\end{theorem}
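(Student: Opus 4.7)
The plan is to derive Theorem 6 as a direct consequence of the Decomposition Theorem (Theorem 4), using the discrimination property that characterises fully residually free groups. Let $G$ be a finitely generated fully residually free group. By the equivalence $1) \Leftrightarrow 3)$ of Proposition 3 (or Proposition 2 in the coefficient-free case), $G$ is isomorphic to the coordinate group $F_{R(S)}$ of some irreducible system $S(X,A)=1$ over $F$. Concretely, one may take $S$ to be the defining relators of a finite presentation of $G$; since $G$ is fully residually free, Corollary 3 (coupled with the remark on irreducible components) forces $ncl(S)=R(S)$, so the identification $G=F_{R(S)}$ holds on the nose, and $V(S)$ is irreducible.

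Now apply the Elimination Process of Theorem 4 to $S=1$ to obtain finitely many NTQ systems $U_1(Y_1)=1,\ldots,U_m(Y_m)=1$ and word mappings $P_1,\ldots,P_m$ with
\[
V_F(S)\;=\;P_1(V(U_1))\cup\cdots\cup P_m(V(U_m)).
\]
As indicated in the statement of Theorem 4, each $P_i$ can be read as an $A$-homomorphism $P_i\colon F_{R(S)}\to F_{R(U_i)}$, with the property that any solution $\psi\colon F_{R(U_i)}\to F$ of $U_i=1$ precomposed with $P_i$ produces a solution $\phi=\psi\circ P_i\colon F_{R(S)}\to F$ of $S=1$, and conversely every solution $\phi$ of $S=1$ factors this way through at least one of the $P_i$.

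The key claim is that at least one of the homomorphisms $P_i\colon G\to F_{R(U_i)}$ is injective. Suppose, for contradiction, that every $P_i$ has non-trivial kernel; pick $g_i\in G\setminus\{1\}$ with $P_i(g_i)=1$ for each $i=1,\ldots,m$. Because $G$ is fully residually free (discriminated by $F$), there exists an $F$-homomorphism $\phi\colon G\to F$ with $\phi(g_i)\neq 1$ for all $i$. But $\phi$ corresponds to a solution of $S=1$, so by the covering property of Theorem 4 it factors as $\phi=\psi\circ P_j$ for some index $j$ and some $\psi\colon F_{R(U_j)}\to F$; then $\phi(g_j)=\psi(P_j(g_j))=\psi(1)=1$, a contradiction. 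Hence some $P_i$ embeds $G$ into the NTQ group $F_{R(U_i)}$.

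The main obstacle is effectiveness: the Elimination Process in Theorem 4 produces the systems $U_i$ and the word mappings $P_i$ algorithmically, but the argument above is non-constructive about \emph{which} index $i$ yields an embedding. To make the embedding effective, one must detect injectivity of one of the finitely many candidate $P_i$. The route is to use the solvable word problem in fully residually free groups (Proposition 1, property \ref{e:pr5}) together with the fact that both $G$ (by Corollary 3) and each NTQ coordinate group $F_{R(U_i)}$ are finitely presented and fully residually free, so that one can enumerate discriminating homomorphisms on both sides in parallel with enumerating consequences of the candidate kernel; one of the $P_i$ must eventually be certified injective. Handling this book-keeping cleanly, using the recursive structure of the NTQ tower (so that each level inherits effective full residual freeness from the level below), is the technical heart of the argument.
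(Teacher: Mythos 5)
Your proof is essentially the paper's own: the paper reduces Theorem 6 to the observation that, since $G = F_{R(S)}$ is fully residually free and its solution set is covered by the finitely many fundamental sets factoring through the NTQ systems produced by Theorem 4, one of those fundamental sets must discriminate $G$, which is precisely what your finite-discrimination contradiction makes explicit. Your closing concern about effectively certifying which $P_i$ is injective is a fair one, but the paper does not elaborate on it either, treating the algorithmic claim as inherited from the effectiveness of the elimination process itself.
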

  This corresponds  to the {extension
  theorems} in the classical theory of elimination for polynomials.
  In \cite{KMNull} we have shown that an NTQ group can be embedded
  into a group obtained from a free group by a series of extensions
  of centralizers. Therefore Theorem 3 follows from Theorem 6.

  Since
  NTQ groups are fully residually free, fundamental sets
  corresponding to different NTQ groups in Theorem 4 discriminate
  fully residually free groups which are coordinate groups of
  irreducible components of system $S(X,A)=1.$ This implies

 \begin{theorem}  \cite{KMIrc}, \cite{KM9}  There is an algorithm to find irreducible components for a system
of equations over a free group.
\end{theorem}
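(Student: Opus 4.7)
The plan is to combine the elimination process (Theorem 4) with the criterion from Proposition 3 (equivalence 1)$\Leftrightarrow$3)) that a finitely generated coordinate group over $F$ is fully residually free if and only if the corresponding algebraic set is irreducible.

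First I apply Theorem 4 to the given system $S(X,A) = 1$ to produce NTQ systems $U_1, \ldots, U_m$ and word mappings $P_i$ such that
\[
V_F(S) \;=\; P_1(V_F(U_1)) \cup \ldots \cup P_m(V_F(U_m)),
\]
where each $P_i$ is realized as an $F$-homomorphism $\pi_i \colon F_{R(S)} \to F_{R(U_i)}$. Since NTQ systems are irreducible, each $F_{R(U_i)}$ is fully residually free, and consequently so is the image $H_i := \pi_i(F_{R(S)})$. Let $N_i \triangleleft F[X]$ be the kernel of the composition $F[X] \twoheadrightarrow F_{R(S)} \xrightarrow{\pi_i} F_{R(U_i)}$; then $F[X]/N_i \cong H_i$ is fully residually free, so by Proposition 3 the algebraic set $W_i := V_F(N_i)$ is irreducible. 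A routine check shows $W_i$ is the Zariski closure of $P_i(V_F(U_i))$, so
\[
V_F(S) \;=\; W_1 \cup \ldots \cup W_m
\]
is a decomposition into irreducible closed subsets.

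Next I remove the redundancies. By uniqueness of the irreducible decomposition (Section 2), the irreducible components of $V_F(S)$ are precisely the $W_i$'s that are maximal under inclusion. Since each $N_i$ equals its own radical, the containment $W_i \subseteq W_j$ is equivalent to $N_j \subseteq N_i$. To decide this algorithmically, I invoke Corollary 2 to compute a finite presentation of $H_j$ from its given generators inside the explicitly presented group $F_{R(U_j)}$; lifting these relators back to $F[X]$ yields, together with $S$, a finite set of normal generators of $N_j$. The containment $N_j \subseteq N_i$ then reduces to checking $\pi_i(w) = 1$ in $F_{R(U_i)}$ for each such generator $w$, which is decidable because the finitely generated fully residually free group $F_{R(U_i)}$ has solvable word problem (Proposition 1).

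The main obstacle I anticipate is effectiveness at the level of coordinate groups: I need both a computable finite presentation of the subgroup $H_j$ of $F_{R(U_j)}$ (supplied by Corollary 2) and a usable word problem algorithm in $F_{R(U_i)}$, both of which ultimately rest on the concrete NTQ/iterated-centralizer picture emerging from the elimination process itself. Once the pairwise tests are done, discarding each $W_i$ strictly contained in some $W_j$ (and picking one representative from each equality class) leaves the irreducible components of $V_F(S)$; each is output as a finite system of equations over $F$ defining it.
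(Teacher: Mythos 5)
Your argument follows the paper's own route: apply Theorem~4 to obtain the NTQ decomposition $V_F(S)=\bigcup_i P_i(V(U_i))$, observe that each image discriminates a fully residually free group $H_i=\pi_i(F_{R(S)})$, invoke Proposition~3 (the equivalence 1)$\Leftrightarrow$3)) to conclude each closure $W_i$ is irreducible, and hence obtain the decomposition into irreducible closed sets. The paper's proof is a single sentence and silently elides the step of discarding redundant $W_i$'s; your explicit treatment of that step via Corollary~2 (effective finite presentation of $H_j\leq F_{R(U_j)}$) and the solvable word problem in $F_{R(U_i)}$ is a correct and welcome filling-in of detail, but it is not a different approach.
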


Now we will formulate a technical result which is the keystone in
the proof of Theorems 4 and 5. An elementary abelian splitting of a
group is the splitting as an amalgamated product or HNN-extension
with abelian edge group. Let $G=A*_{C}B$ be an elementary abelian
splitting of $G$.
  For $c\in C$ we  define an automorphism $\phi_c :G\rightarrow G$
 such that $\phi_c(a)=a$ for $a\in A$ and $\phi_c(b)=b^{c}=c^{-1}bc$ for  $b\in B$.

If $G=A*_{C}=\langle A,t|c^{t}=c', c\in C\rangle$ then for $c \in C$
define $\phi_c :G\rightarrow G$ such that  $\phi_c (a)=a$ for $a\in
A$ and  $\phi_c (t)=ct$.

We call $\phi_c$ a {\em Dehn twist} obtained from the corresponding
elementary abelian splitting of $G$. If $G$ is an $F$-group, where
$F$ is a subgroup of one of the factors  $A$ or $B$, then Dehn
twists that fix elements of the free group $F\leq A$ are
 called  {\em canonical Dehn twists}.

If $G=A*_{C}B$ and $B$ is a maximal abelian subgroup of $G$, then
every automorphism of $B$ acting trivially on $C$ can be extended to
the automorphism of $G$ acting trivially on $A$. The subgroup of
$Aut (G)$ generated by such automorphisms and canonical Dehn twists
is called the group of canonical automorphisms of $G$.

 Let $G$ and $K$  be  $H$-groups and  ${\mathcal A} \leq
Aut_H(G)$ a group
 of \newline $H$-automorphisms of $G$. Two
$H$-homomorphisms $\phi$ and $\psi$ from $G$ into $K$ are called
{\em $A$-equivalent} (symbolically, $\phi \sim_{\mathcal A} \psi$)
if there exists $\sigma\in {\mathcal A}$ such that $\phi=
\sigma\psi$ (i.e., $g^\phi
 = g{^{\sigma\psi}}$ for $g \in G$). Obviously, $\sim_{\mathcal A}$ is an
equivalence relation on $Hom_H(G,K)$.

 Let  $G$ be a fully residually $F$ group ($F=F(A)\leq G$) generated by a finite set $X$ (over $F$)
 and  ${\mathcal A}$  the group of canonical $F$ automorphisms of $G$. Let  $ \bar F = F(A\cup Y)$
 a free group with basis $A \cup Y$ (here $Y$ is an arbitrary set) and
$\phi_1, \phi_2 \in Hom_F(G,\bar F)$ .  We write $\phi _1 < \phi _2$
if there exists an automorphism $\sigma\in {\mathcal A}$ and an
$F$-endomorphism $\pi \in Hom_F(\bar F, \bar F)$   such that $\phi
_2=\sigma^{-1} \phi _1\pi$ and
 $$\sum_{x \in X} | x^{\phi _1}| < \sum_{x \in X} | x^{\phi _2}|.$$

\begin{figure}[here]
\centering{\mbox{\psfig{figure=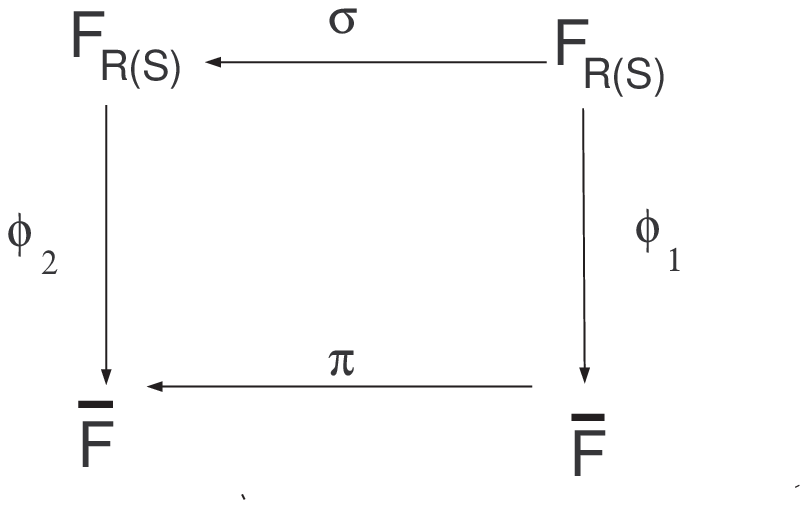,height=2in}}} \caption{$\phi
_1 < \phi _2$} \label{ET0.}
\end{figure}
  An $F$- homomorphism  $\phi :G\rightarrow \bar F$ is
  called {\em minimal} if there is no
  $\phi _1$ such that $\phi _1 < \phi$. In particular,
if  $S(X,A)=1$ is a system of  equations over $F=F(A)$ and
$G=F_{R(S)}$ then $X \cup A$ is a generating set for $G$ over $F$.
In this event, one can consider {\em minimal solutions} of $S = 1$
in $\bar F$.
\begin{definition}
\label{de:max-standard}
 Denote by  $R_{\mathcal A}$   the intersection of
the kernels of all minimal  (with respect to ${\mathcal A}$)
$F$-homomorphisms from $Hom_F(G,\bar F)$. Then $G/R_{\mathcal A}$ is
called the {\em maximal standard quotient} of $G$  and the canonical
epimorphism $\eta:G \rightarrow G/R_{\mathcal A}$ is the {\em
canonical projection}.
\end{definition}
\begin{theorem}\cite{KMIrc} The maximal standard quotient of a
finitely generated fully residually free group is a proper quotient
and can be effectively constructed.
\end{theorem}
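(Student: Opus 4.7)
The plan is to apply an Elimination Process (EP) of Makanin--Razborov type, in the form refined in \cite{KMNull}, to the defining system $S(X,A)=1$ of $G = F_{R(S)}$, regarding each $F$-homomorphism $\phi\colon G \to \bar F$ as a solution of $S=1$ in $\bar F = F(A \cup Y)$. The canonical automorphism group $\mathcal{A}$ acts on $Hom_F(G,\bar F)$ by precomposition, and the minimal homomorphisms are precisely the shortest representatives in their $\mathcal{A}$-orbits. The goal is to show that the EP, when run on minimal solutions, terminates in a finite tree whose leaves describe $R_{\mathcal{A}}$ explicitly as the intersection of the kernels of finitely many proper $F$-epimorphisms of $G$.

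First I would convert $S=1$ into a generalized equation $\Omega_0$ and run the EP, tracking how each entire transformation acts on a given solution. The key dichotomy is: every transformation either (i) alters a solution in a way that can be realized by a canonical automorphism in $\mathcal{A}$ (a Dehn twist arising from an elementary abelian splitting of an intermediate coordinate group, or an automorphism of an abelian vertex subgroup), or (ii) strictly decreases a well-founded complexity invariant attached to $\Omega$. Since by definition a minimal $\phi$ cannot be shortened by any element of $\mathcal{A}$, only transformations of type (ii) can be applied along the part of the EP relevant to minimal solutions. Well-foundedness of the complexity then forces termination in finitely many steps, producing a finite tree with leaves $i=1,\ldots,N$.

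Each leaf $i$ yields a terminal generalized equation whose coordinate group $G_i$ comes equipped with a canonical $F$-epimorphism $\pi_i\colon G \to G_i$, and every minimal $\phi$ factors as $\phi = \bar\phi \circ \pi_i$ for some $i$. By the way the EP is arranged, each branch closes by imposing at least one new relation on $G$ (from commutation requirements on abelian parts, from quadratic reductions, or from identifications produced by entire transformations); for non-free fully residually free $G$ these relations are non-trivial in $G$, so $K_i := \ker \pi_i \neq 1$ and each $\pi_i$ is proper. Setting $N := \bigcap_i K_i$, the factorization property gives $N \subseteq R_{\mathcal{A}}$; conversely, each $G_i$ is itself fully residually free and discriminated into $\bar F$ by minimal solutions of $G$ that factor through $\pi_i$, so any $w \in R_{\mathcal{A}}$ must lie in every $K_i$, giving $R_{\mathcal{A}} \subseteq N$. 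Hence $R_{\mathcal{A}} = N$, and the maximal standard quotient $G/R_{\mathcal{A}} = G/N$ is a proper quotient produced effectively by the EP.

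The main obstacle is the termination step: one must pin down the correct complexity invariant on generalized equations and verify that the transformations that leave it unchanged are precisely those realizable by elements of $\mathcal{A}$. This requires a careful accounting of the quadratic, periodic, and free portions of $\Omega$, and a matching of each elementary abelian splitting encountered in the process with an honest canonical Dehn twist of the intermediate coordinate group. This analysis is the technical core of the elimination theory of \cite{KMNull}; once termination is in hand, extracting the $\pi_i$ and verifying the identification $R_{\mathcal{A}} = \bigcap_i K_i$ proceed essentially by bookkeeping.
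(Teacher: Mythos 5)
Your global architecture --- reduce $S=1$ to generalized equations, run the elimination process on minimal solutions, obtain a finite tree, and identify $R_{\mathcal{A}}$ with the intersection of the kernels of the leaf epimorphisms --- agrees with the paper, and your bookkeeping argument for $R_{\mathcal{A}}=\bigcap_i\ker\pi_i$ from the factorization of minimal homomorphisms is sound. The gap is in the termination step, which you correctly flag as the core difficulty but whose proposed mechanism does not hold. You posit that every entire transformation either is realized by a canonical automorphism in $\mathcal{A}$ or else strictly decreases a well-founded complexity invariant, so that minimality of $\phi$ rules out all steps except complexity-decreasing ones. That dichotomy is false: the paper's complexity $\tau(\Omega)$ is typically preserved by entire transformations that are nevertheless \emph{proper} epimorphisms (not automorphisms) of the coordinate group, and the paper explicitly records that along an infinite branch $\tau$ can stay constant while the generalized equation recurs. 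The only setting in which a recurring $\Omega_v$ genuinely yields an automorphism is the quadratic case $\gamma_i=2$ on the whole active part; once $\gamma_i>2$ somewhere, minimality gives no contradiction with an individual complexity-preserving step, and your argument stalls.

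What the paper actually uses to close this gap is Theorem 9: for a \emph{minimal} solution, the entire transformation \emph{sequence} has length bounded by a constructible $N(\Omega)$. Its proof rests on inputs your proposal omits entirely: Bulitko's lemma bounding the exponent of periodicity of a strongly minimal solution (Lemma 6), the estimate $|I^{\delta}|\le f(\Omega)\,\psi$ relating the participating interval to the excess, valid under minimality with respect to the quadratic Dehn twists (Lemma 5), the abelian-splitting reduction when the exponent of periodicity is unbounded (Lemma 7), and the fact that each $\mu$-reducing path shortens $I^{\delta}$ by at least $|\mu^{\delta}|/10$ (Lemma 8). The genuine well-founded quantity is the interval length against a bounded number of reducing passes, not a complexity decrease per step. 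Finally, your justification that each branch yields a proper quotient (``imposes at least one new relation on $G$'') is too loose; the paper gets properness by observing that each segment between automorphism loops either drops $\tau$ or produces a proper residually free quotient, and that proper chains of residually free quotients are finite --- that descending-chain input needs to be made explicit rather than derived from the EP machinery informally.
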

This result (without the algorithm) is called the "shortening
argument" in Sela's approach.
\section{Elimination process}

Given a system $S(X) = 1$ of equations in a free group $F(A)$ one
can effectively construct a finite set of  generalized equations
$$\Omega_1, \ldots, \Omega_k$$
 (systems of equations of a particular type) such that:

 \begin{itemize}
    \item given a solution of $S(X) = 1$ in $F(A)$ one can effectively construct a reduced solution of one of $\Omega_i$ in the free semigroup  with basis $A \cup A^{-1}$.
     \item  given a solution of some  $\Omega_i$ in the free semigroup  with basis $A \cup A^{-1}$ one can effectively construct a solution of $S(X) = 1$ in $F(A)$.
\end{itemize}

This is done as follows. First, we replace the system $S(X) = 1$ by
a system of equations, such that each of them has length 3. This can
be easily done by adding new variables. For one equation of length 3
we can construct a generalized equation as in the example below. For
a system of equations we construct it similarly (see \cite{KMIrc}).

{\bf Example.} Suppose we have the simple equation $xyz = 1$ in a
free group. Suppose that we have a solution to this equation denoted
by $x^\phi,y^\phi,z^\phi$ where is $\phi$ is a given homomorphism
into a free group $F(A)$.  Since $x^\phi,y^\phi,z^\phi$ are reduced
words in the generators $A$ there must be complete cancellation. If
we take a concatenation of the geodesic subpaths corresponding to
$x^{\phi}, y^{\phi}$ and $z^{\phi}$ we obtain a path in the Cayley
graph corresponding to this complete cancellation. This is called a
cancellation tree. Then $x^{\phi} = \lambda_1\circ \lambda_2$,
$y^{\phi} = \lambda_2^{-1}\circ \lambda_3$ and $z^{\phi} =
\lambda_3^{-1}\circ \lambda_1^{-1}$, where $u\circ v$ denotes the
product of reduced words $u$ and $v$ such that there is no
cancellation between $u$ and $v$. In the case when all the words
$\lambda_1, \lambda_2, \lambda _3$ are non-empty, the generalized
equation would be the interval in Fig. 2.
\begin{figure}[here]
\centering{\mbox{\psfig{figure=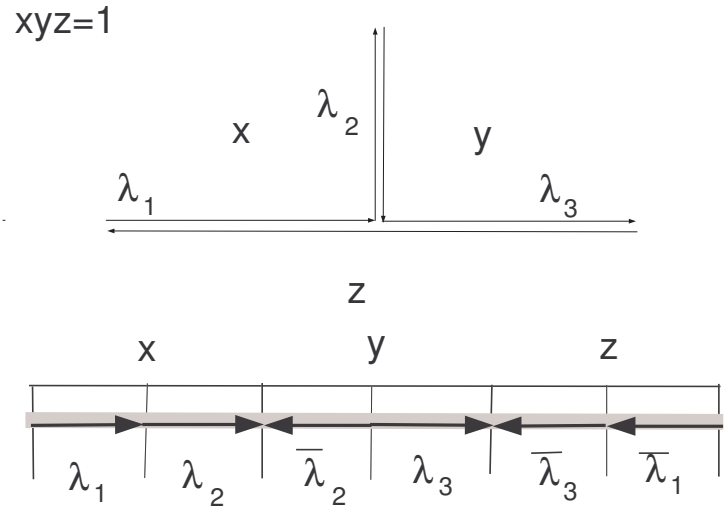,height=3in}}} \caption{From the
cancellation tree for the equation $xyz=1$ to the generalized
equation ($x^{\phi}=\lambda _1\circ\lambda _2,\ y^{\phi}=\lambda
_2^{-1}\circ\lambda _3,\ z^{\phi}=\lambda _3^{-1}\circ\lambda
_1^{-1}).$} \label{ET0}
\end{figure}

Given a generalized equation $\Omega$  one can apply {elementary
transformations} (there are only finitely many of them) and get a
new generalized equation $\Omega^\prime$. If $\sigma$ is a solution
of $\Omega$, then elementary transformation transforms $\sigma$ into
 $\sigma^\prime$.
$$(\Omega, \sigma) \rightarrow (\Omega^\prime, \sigma^\prime).$$
 Elimination Process is a branching process such that on each step one of the finite number of elementary transformations is
applied according to some precise rules to a generalized equation on
this step.

$$\Omega_0 \rightarrow \Omega_1   \rightarrow  \ldots  \rightarrow \Omega_k.$$

From the group theoretic view-point the elimination process  tells
something about the coordinate groups of the systems involved.

This allows one to   transform  the pure combinatorial and
algorithmic results obtained in the elimination process into
statements about the coordinate groups.

\subsection{Generalized equations}
\label{se:4-1}

\begin{definition}
A combinatorial generalized equation $\Omega$ consists of the
following components:
\begin{enumerate}
\item A finite set of {\bf bases} $BS = BS(\Omega)$. The set of
bases ${\mathcal M}$ consists of $2n$ elements ${\mathcal M} =
\{\mu_1, \ldots, \mu_{2n}\}$. The set ${\mathcal M}$ comes equipped
with two functions: a function $\varepsilon: {\mathcal M}
\rightarrow \{1,-1\}$ and an involution $\Delta: {\mathcal M}
\rightarrow {\mathcal M}$ (that is, $\Delta$ is a bijection such
that $\Delta^2$ is an identity on ${\mathcal M}$). Bases $\mu$ and
$\Delta(\mu)$ (or $\bar\mu$) are called {\it dual bases}. We denote
bases by letters $\mu, \lambda$, etc.

\item A set of {\bf boundaries} $BD = BD(\Omega)$. $BD$ is  a finite
initial segment of the set of positive integers  $BD = \{1, 2,
\ldots, \rho+1+m\}$, where $m$ is the cardinality of the basis
$A=\{a_1,\ldots ,a_m\}$ of the free group $F=F(A).$ We use letters
$i,j$, etc. for boundaries.

\item Two functions $\alpha : BS \rightarrow BD$ and $\beta : BS
\rightarrow BD$. We call $\alpha(\mu)$ and $\beta(\mu)$ the initial
and terminal boundaries of the base $\mu$ (or endpoints of $\mu$).
These functions satisfy the following conditions for every base $\mu
\in BS$: $\alpha(\mu) < \beta(\mu)$   if $\varepsilon (\mu)=1$ and
$\alpha(\mu) > \beta(\mu)$  if $\varepsilon (\mu)=-1$.
\item The set of boundary connections $(p,\lambda,q)$, where $p$ is
a boundary on $\lambda$ (between $\alpha(\lambda)$ and
$\beta(\lambda))$ and $q$ is a boundary on $\Delta (\lambda)$.
\end{enumerate}
\end{definition}

For a combinatorial generalized equation $\Omega$, one can
canonically associate a system of equations in {\bf variables} $h_1,
\ldots, h_\rho$ over $F(A)$ (variables $h_i$ are sometimes called
{\it items}). This system is called a {\bf generalized equation},
and (slightly abusing the language) we denote it by the same symbol
$\Omega$. The generalized equation  $\Omega$ consists of the
following three types of equations.
\begin{enumerate}
\item Each pair of dual  bases $(\lambda, \Delta(\lambda))$ provides an
equation $$[h_{\alpha(\lambda)} h_{\alpha(\lambda) + 1} \ldots
h_{\beta(\lambda ) - 1}]^ {\varepsilon (\lambda)} =$$ $$
[h_{\alpha(\Delta(\lambda))} h_{\alpha(\Delta (\lambda)) + 1} \ldots
h_{\beta(\Delta(\lambda )) - 1}]^{\varepsilon (\Delta(\lambda))}.$$
These equations are called {\bf basic equations}.

\item Every boundary connection $(p,\lambda,q)$ gives rise to a
\emph{boundary equation}
$$[h_{\alpha(\lambda)} h_{\alpha(\lambda) + 1} \cdots h_{p-1}] = [h_{\alpha(\Delta
(\lambda ))} h_{\alpha(\Delta(\lambda )) + 1} \cdots h_{q-1}],$$ if
$\varepsilon(\lambda) = \varepsilon(\Delta(\lambda))$ and
$$[h_{\alpha (\lambda )} h_{\alpha(\lambda ) + 1} \cdots h_{p-1}] = [h_{q}
h_{q+1} \cdots h_{\beta(\Delta(\lambda))-1}]^{-1} ,$$ if
$\varepsilon(\lambda)= -\varepsilon(\Delta(\lambda)).$
\item Constant equations: $h_{\rho+1}=a_1,\ldots
,h_{\rho+1+m}=a_m.$
\end{enumerate}

\begin{remark}
We  assume that every generalized equation comes associated with a
combinatorial one.
\end{remark}

Denote by $F_{R(\Omega )}$ the coordinate group of the generalized
equation.
\begin{definition}
Let $\Omega(h) = \{L_1(h) = R_1(h), \ldots, L_s(h) = R_s(h)\}$ be a
generalized equation in variables $h = (h_1, \ldots, h_{\rho})$. A
sequence of reduced nonempty words $U = (U_1(Z), \ldots,
U_{\rho}(Z))$ in the alphabet $(A\cup Z)^{\pm 1}$ is a {\em
solution} of $\Omega $ if:
\begin{enumerate}
\item all words $L_i(U), R_i(U)$ are reduced as written,
\item $L_i(U) =  R_i(U),\ i \in [1,s].$
\end{enumerate}
\end{definition}

If we specify a particular solution $\delta$ of a generalized
equation $\Omega$ then we use a pair $(\Omega, \delta )$.

It is convenient to  visualize  a generalized equation $\Omega$ as
follows.

\begin {center}
\begin{picture}(200,100)(0,0)
\put(10,85){\line(0,-1){80}} \put(30,85){\line(0,-1){80}}
\put(50,85){\line(0,-1){80}} \put(70,85){\line(0,-1){80}}
\put(90,85){\line(0,-1){80}} \put(110,85){\line(0,-1){80}}
\put(130,85){\line(0,-1){80}} \put(150,85){\line(0,-1){80}}
\put(170,85){\line(0,-1){80}} \put(10,80){\line(1,0){160}} \put
(10,87){$1$} \put (30,87){$2$} \put (50,87){$3$} \put (140,87){$\rho
-1$} \put (170,87){$\rho$} \put (10,65){\line(1,0){60}} \put
(70,45){\line(1,0){20}} \put (90,55){\line(1,0){40}} \put
(110,30){\line(1,0){40}} \put (20,67){$\lambda$} \put
(92,57){$\Delta(\lambda )$} \put (80,47){$\mu$} \put
(112,37){$\Delta(\mu)$}
\end{picture}
\end{center}

\subsection{ Elementary transformations}
\label{se:5.1}

In this section we describe {\em elementary transformations} of
generalized equations. Let $\Omega $ be a generalized equation  An
elementary transformation $(ET)$ associates to a generalized
equation $\Omega$ a family of generalized equations $ET(\Omega ) =
\{\Omega_1,\ldots ,\Omega _k\}$  and  surjective homomorphisms
$\pi_i : F_{R(\Omega)} \rightarrow F_{R(\Omega_i)}$ such that for
any solution $\delta$ of $\Omega$ and corresponding epimorphism $\pi
_{\delta}:F_{R(\Omega )}\rightarrow F$ there exists $i\in\{1,\ldots
,k\}$ and a solution $\delta _i$ of $\Omega _i$ such that the
following diagram commutes.

\[\begin{diagram}
\label{diag:1} \node{F_{R(\Omega )}} \arrow{e,t}{\pi _i}
\arrow{s,l}{\pi_{\delta}} \node{F_{R(\Omega_i)}}
\arrow{sw,r}{\pi_{\delta _i}} \\
\node{F}
\end{diagram}
\]

\begin{enumerate}
\item[(ET1)] {\em (Cutting a base (see Fig. 3))}. Let $\lambda$ be a base in
$\Omega$ and $p$ an {\em internal} boundary of $\lambda$ with a
boundary connection $(p,\lambda ,q).$ Then we cut the base $\lambda$
in $p$ into two new bases $\lambda_1$ and $\lambda_2$ and cut
$\bar\lambda$ in $q$ into the bases $\bar\lambda_1$, $\bar
\lambda_2$.

\begin{figure}[here]
\centering{\mbox{\psfig{figure=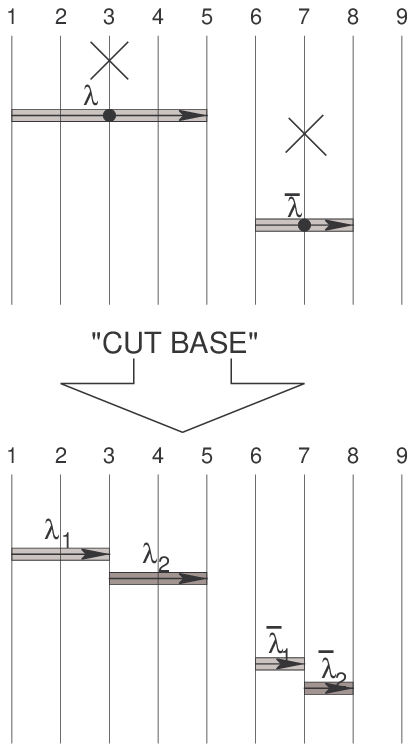}}} \caption{Elementary
transformation (ET1).} \label{ET1}
\end{figure}

\item[(ET2)] {\em (Transfering a base (see Fig. 4))}. If a base $\lambda$
of $\Omega$ contains a base $\mu$ (that is, $\alpha(\lambda) \leq
\alpha (\mu) < \beta(\mu) \leq \beta(\lambda)$) and all boundaries
on $\mu$ are $\lambda$-tied by boundary connections then we transfer
$\mu$ from its location on the base $\lambda$ to the corresponding
location on the base $\bar\lambda$.

\begin{figure}[here]
\centering{\mbox{\psfig{figure=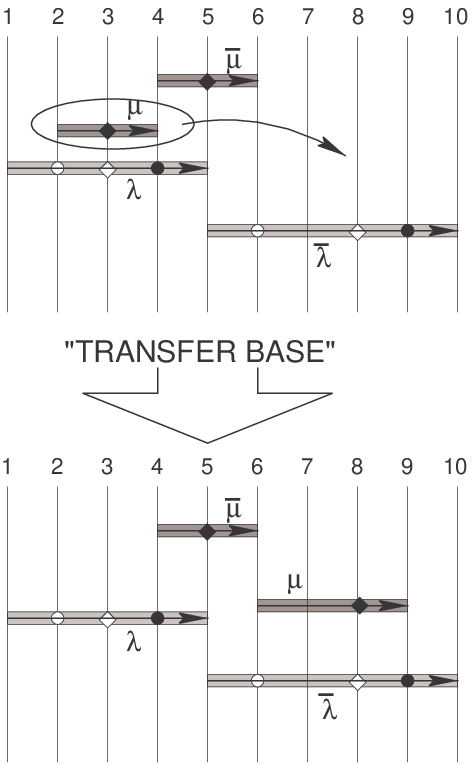}}} \caption{Elementary
transformation (ET2).} \label{ET2}
\end{figure}

\item[(ET3)] {\em (Removal of a pair of  matched bases (see Fig. 5))}. If
the bases $\lambda$ and $\bar\lambda$ are {\em  matched} (that is,
$\alpha(\lambda) = \alpha(\bar\lambda), \beta(\lambda) =
\beta(\bar\lambda)$) then we remove $\lambda, \bar\lambda$ from
$\Omega$.

\begin{remark}
Observe, that for $i = 1,2,3,$ $ ETi(\Omega)$ and $\Omega$ have the
same set of variables $H$, and the identity map $F[H] \rightarrow
F[H]$ induces an isomorphism $F_{R(\Omega)} \rightarrow
F_{R(\Omega')}$. Moreover, $\delta$ is a solution of $\Omega$ if and
only if $\delta$ is a solution of $\Omega'$.
\end{remark}

\begin{figure}[here]
\centering{\mbox{\psfig{figure=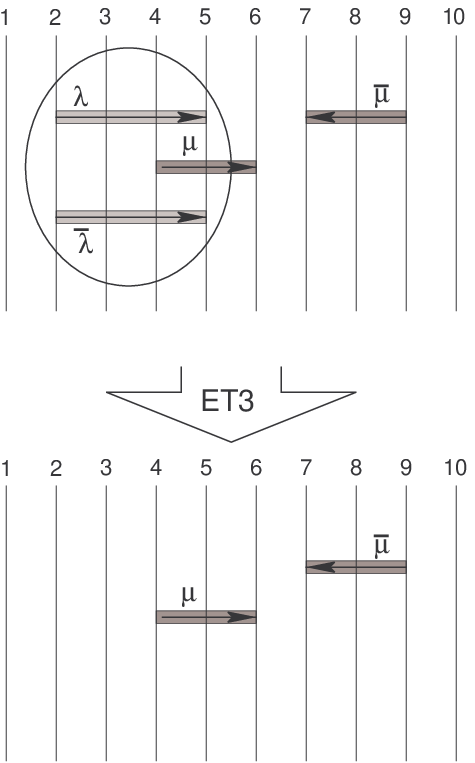}}} \caption{Elementary
transformation (ET3).} \label{ET3}
\end{figure}

\item[(ET4)] {\em (Removal of a lone base (see Fig. 6))}. Suppose, a base
$\lambda$ in $\Omega$ does not {\em intersect} any other base, that
is, the items $h_{\alpha(\lambda)}, \ldots, h_{\beta(\lambda)-1}$
are contained only on the  base $\lambda$. Suppose also that all
boundaries in $\lambda$ are $\lambda$-tied, i.e., for every $i$
($\alpha(\lambda) \leq i\leq \beta -1$) there exists a boundary
$b(i)$ such that $(i,\lambda ,b(i))$ is a boundary connection in
$\Omega$. Then we remove the pair of bases $\lambda$ and
$\bar\lambda$ together with all the boundaries $\alpha(\lambda)+1,
\ldots, \beta(\lambda)-1$ (and rename the rest $\beta(\lambda) -
\alpha(\lambda) - 1$ boundaries corespondignly).

We define the homomorphism $\theta: F_{R(\Omega)} \rightarrow
F_{R(\Omega')}$ as follows:
\[
\theta(h_j) = h_j\ {\rm if}\ j < \alpha(\lambda)\ {\rm or}\ j \geq
\beta(\lambda)
\]
\[
\theta(h_i) = \left\{\begin{array}{ll}
h_{b(i)} \ldots h_{b(i+1)-1}, & if\ \varepsilon(\lambda) = \varepsilon(\bar\lambda),\\
h_{b(i)}^{-1} \ldots h_{b(i+1)-1}^{-1},& if\ \varepsilon(\lambda) =
-\varepsilon(\bar\lambda)
\end{array}
\right.
\]
for $\alpha (\lambda) \leq i \leq \beta(\lambda)-1$. It is not hard
to see that $\theta$ is an isomorphism.

\begin{figure}[here]
\centering{\mbox{\psfig{figure=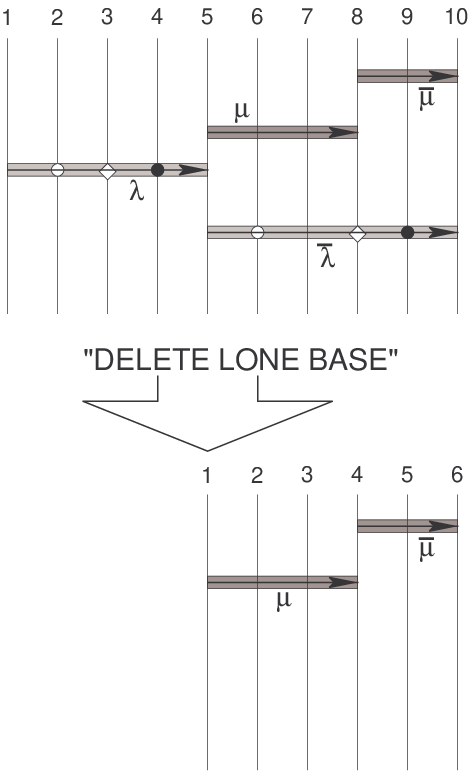}}} \caption{Elementary
transformation (ET4).} \label{ET4}
\end{figure}

\item[(ET5)] {\em (Introduction of a boundary (see Fig. 7))}. Suppose a
boundary $p$ in a base $\lambda$ is not $\lambda$-tied. The
transformation (ET5) $\lambda$-ties it. To this end, suppose
$\delta$ is a solution of $\Omega$. Denote $\lambda ^{\delta}$ by
$U_\lambda$, and let $U'_\lambda$ be the beginning of this word
ending at $p$. Then we perform one of the following transformations
according to where the end of $U'_\lambda$ on $\bar\lambda$ might be
situated:

\begin{enumerate}
\item If the end of $U'_\lambda$ on $\bar\lambda$ is situated on the boundary
$q$, we introduce the boundary connection $\langle p, \lambda ,q
\rangle$. In this case the corresponding homomorphism $\theta_q :
F_{R(\Omega)} \rightarrow F_{R(\Omega_q)}$ is induced by the
identity isomorphism on $F[H]$. Observe that $\theta_q$ is not
necessary an isomorphism.

\item If the end of $U'_\lambda$ on $\bar\lambda$ is situated
between $q$ and $q+1$, we introduce a new boundary $q'$ between $q$
and $q+1$ (and rename all the boundaries); introduce a new boundary
connection $(p,\lambda,q')$. Denote the resulting equation by
$\Omega _q'$. In this case the corresponding homomorphism
$\theta_{q'}: F_{R(\Omega)} \rightarrow F_{R(\Omega_{q'})}$ is
induced by the map $\theta_{q'}(h) = h$, if $h \neq h_q$, and
$\theta_{q'}(h_q) = h_{q'} h_{q'+1}$. Observe that $\theta_{q'}$ is
an isomorphism.
\end{enumerate}
Obviously, the is only a finite number of possibilities such that
for any solution $\delta$ one of them takes place.

\begin{figure}[here]
\centering{\mbox{\psfig{figure=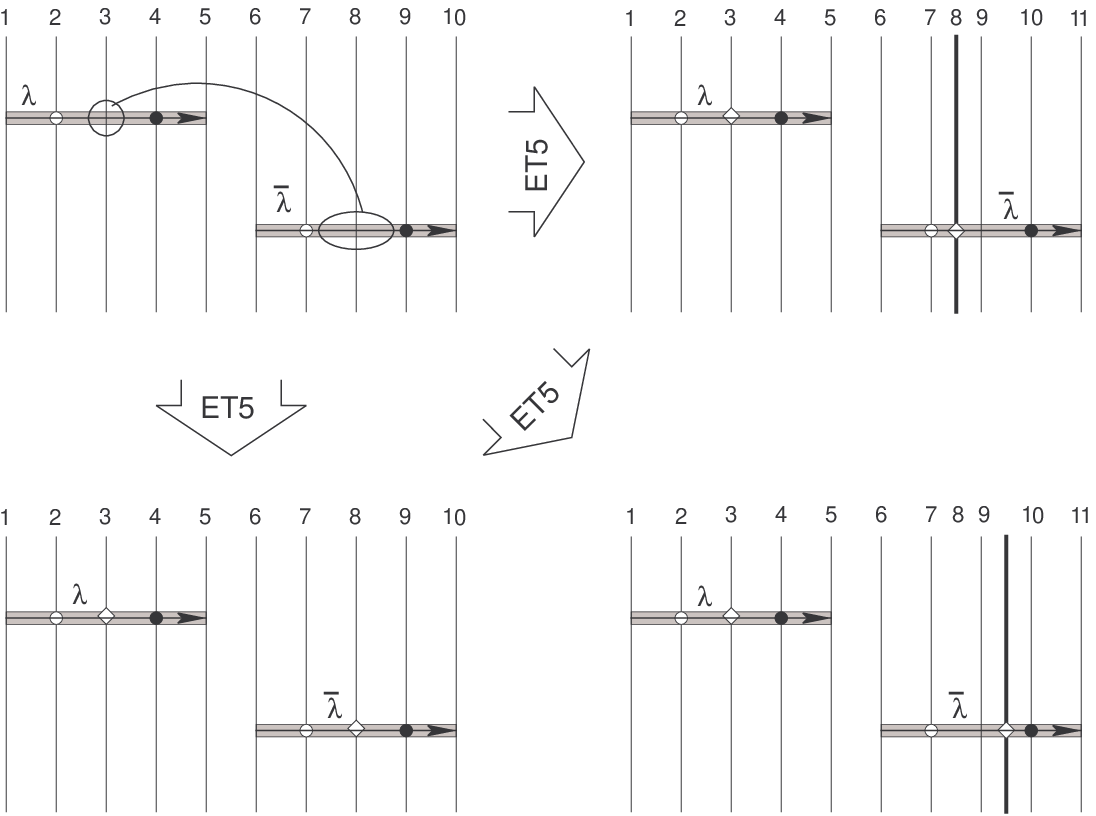}}} \caption{Elementary
transformation (ET5).} \label{ET5}
\end{figure}
\end{enumerate}

\subsection{Derived transformations and auxiliary transformations}
\label{se:5.2half}

In this section we define  complexity of a generalized equation and
describe several useful ``derived'' transformations of generalized
equations. Some of them can be realized as finite sequences of
elementary transformations, others result in equivalent generalized
equations but cannot be realized by finite sequences of elementary
moves.

A boundary is open if it is an internal boundary of some base,
otherwise it is closed. A section $\sigma =[i,\ldots ,i+k]$ is said
to be closed if boundaries $i$ and $i+k$ are closed and all the
boundaries between them are open.

Sometimes it will be convenient to subdivide all sections of
$\Omega$ into {\bf active} and {\bf non-active sections}. Constant
section will always be non-active.  A variable $h_q$ is called free
is it meets no base. Free variables are transported to the very end
of the interval behind all items in $\Omega$ and become non-active.
\begin{enumerate}

\item[(D1)] {\em (Deleting a complete base)}. A base $\mu$ of $\Omega$ is
called {\em complete} if there exists a closed section $\sigma$ in
$\Omega$ such that $\sigma = [\alpha(\mu), \beta(\mu)]$.

Suppose $\mu$ is a complete base of $\Omega$ and $\sigma$ is a
closed section  such that $\sigma = [\alpha(\mu),\beta(\mu)]$. In
this case using ET5, we transfer all bases from $\mu$ to $\bar\mu$;
using ET4, we remove the lone base $\mu$ together with the section
$\sigma(\mu)$.

{\bf Complexity.} Denote by $\rho $ the number of variables $h_i$ in
all (active) sections of $\Omega$, by $n = n(\Omega)$ the number of
bases in (active) sections of $\Omega$, by $n(\sigma )$ the number
of bases in a closed section $\sigma$.

The {\em complexity} of an equation $\Omega $ is the number
$$\tau = \tau (\Omega) = \sum_{\sigma \in A\Sigma_\Omega} max\{0,
n(\sigma)-2\},$$ where $A\Sigma_\Omega$ is the set of all active
closed sections.

\item[(D2)] {\it (Linear elimination)}.
 Let $\gamma (h_i)$ denote the number of bases met by $h_i$. A base $\mu
\in BS(\Omega )$ is called {\em eliminable} if at least one of the
following holds:
\begin{enumerate}
\item[(a)] $\mu$ contains an  item $h_i$ with $\gamma(h_i)=1$,

\item[(b)] at least one of the boundaries $\alpha (\mu),\beta (\mu)$
is different from $1,\rho +1 $, does not touch any other base
(except $\mu$) and is not connected by any boundary connection.
\end{enumerate}

We denote this boundary by $\epsilon$. A linear elimination for
$\Omega$ works as follows.

Suppose the base $\mu$ is removable because it satisfies condition
(b). We first cut $\mu$  at the nearest to $\epsilon$
$\mu$-connected boundary and denote it by $\tau$.  If there is no
such a boundary we denote by $\tau$ the other boundary of $\mu$.
Then we remove the base obtained from $\mu$ between $\epsilon$ and
$\tau$ together with its dual (maybe this part is the whole base
$\mu$), and remove the boundary $\epsilon$. Denote the new equation
by $\Omega '$.


Suppose the base is removable because it satisfies condition (a).

Suppose first that $\gamma (h_i)=1$ for  the leftmost item $h_i$ on
$\mu$. Denote by $\epsilon$ the left boundary of $h_i$. Let $\tau$
 be the nearest to $\epsilon$ $\mu$-connected boundary  (or the
 other terminal boundary of $\mu$ if there are no $\mu$-connected
 boundaries). We remove the base obtained from
$\mu$ between $\epsilon$ and $\tau$ together with its dual (maybe
this part is the whole base $\mu$), and remove $h_i$.

We make a mirror reflection of this transformation if $\gamma
(h_i)=1$ for the rightmost item $h_i$ on $\mu$.

Suppose now that $h_i$ is not the leftmost or the rightmost item on
$\mu$.
 Let $\epsilon$ and $\tau$ be the nearest to $h_i$ $\mu$-connected boundaries on the left
 and on the right of $h_i$ (each of them can be a terminal boundary of $\mu$).
 We cut $\mu$ at the boundaries $\epsilon$ and $\tau$, remove the
base between $\epsilon$ and $\tau$ together with its dual and remove
$h_i$.

\begin{lemma} Linear elimination does not increase
the complexity of $\Omega$, and the number of items decreases.
Therefore the linear elimination process stops after finite number
of steps.\end{lemma}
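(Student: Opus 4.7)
The plan is to establish the three assertions of the lemma---non-increase of complexity, strict decrease of the number of items, and termination---by tracing the effect of each move prescribed by the procedure. Termination will be immediate once the second assertion is in hand, since $\rho$ is a nonnegative integer that cannot decrease indefinitely.

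First I would handle the item count. In case~(b), the boundary $\epsilon$ is explicitly deleted; deleting a boundary fuses the two items it separated into a single item, so $\rho$ drops by one. In case~(a) the item $h_i$ itself is explicitly removed, so $\rho$ again drops by one; the preliminary cuts of $\mu$ at the $\mu$-connected boundaries nearest to $h_i$ introduce no new items, because a cut of a base only inserts new base endpoints, not new variables. Thus in every execution of linear elimination $\rho$ strictly decreases by at least one.

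For the complexity, the key observation is that cutting $\mu$ at a $\mu$-connected boundary $\tau$ that lies inside the closed section $\sigma$ containing $\mu$ replaces one base of $\sigma$ by two, while the subsequent removal of one of these two pieces (together with its dual in whichever closed section $\bar\mu$ belongs to) restores each affected section's base count to its previous value. Hence the cut-and-remove phase, taken by itself, does not change $n(\sigma)$ for any section $\sigma$. The only way $\tau(\Omega)$ could then grow is if the final step deletes a closed boundary (namely $\epsilon$ in case~(b), or the boundary freed by the removal of $h_i$ in case~(a)), merging two previously distinct closed sections $\sigma$ and $\sigma'$. But this merge occurs exactly when the two removed dual bases straddled the deleted wall, so after merging the new section carries $n(\sigma)+n(\sigma')-2$ bases. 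A one-line check for $f(n)=\max\{0,n-2\}$ gives
\begin{equation*}
f\bigl(n(\sigma)+n(\sigma')-2\bigr)\le f(n(\sigma))+f(n(\sigma')),
\end{equation*}
and summing over all active closed sections yields $\tau(\Omega')\le\tau(\Omega)$. If the two sections coincide, or one of them is non-active, or no closed boundary is in fact deleted, the bound is trivially true.

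The only real obstacle is bookkeeping: one must enumerate the sub-cases---$h_i$ leftmost, rightmost, or internal on $\mu$ in case~(a); the existence or non-existence of a $\mu$-connected boundary $\tau$ in case~(b); and $\mu,\bar\mu$ lying in the same or in different closed sections---and verify in each that the two removed bases contribute to the correct sides of the deleted wall so that the subadditivity above applies. No ingredient beyond this case check is needed, and the termination statement then follows at once from the strict decrease of $\rho$.
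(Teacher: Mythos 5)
Your argument diverges from the paper's at the critical sub-case, and it has a gap there. The core claim you use to control the complexity -- that ``the cut-and-remove phase, taken by itself, does not change $n(\sigma)$ for any section $\sigma$'' -- is false precisely in the hardest sub-case of (a), where $h_i$ is neither the leftmost nor the rightmost item on $\mu$. There the prescription is to cut $\mu$ at \emph{two} boundaries $\epsilon$ and $\tau$, producing three pieces, and then to remove the middle piece together with its dual. Counting: the pair $\{\mu,\bar\mu\}$ becomes two pairs after the cuts and removal, so the total number of bases goes \emph{up by two}, not zero. Your one-cut accounting simply does not cover this case, even though you flag it in your ``bookkeeping'' list at the end.

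You then compound the issue by analyzing the wrong topological event. In the internal sub-case both boundaries of $h_i$ were open (internal to $\mu$) before the move; after $\mu_2$ and $h_i$ are deleted the merged boundary no longer meets any base, so it becomes a new \emph{closed} boundary and the single closed section $\sigma$ that contained $\mu$ is \emph{split} into two closed sections $\sigma_1,\sigma_2$ with $n(\sigma_1)+n(\sigma_2)=n(\sigma)+2$. You instead posit a \emph{merge} of two pre-existing sections and invoke the subadditivity $f(n+n'-2)\le f(n)+f(n')$; that inequality is fine, but it is aimed at a situation that does not occur here and does not bound the complexity in the actual split. (The merge picture is correct for case (b) and for the boundary sub-cases of (a), but not for the internal one.) The paper's proof handles the internal case by a different observation: the split produces two sections \emph{each containing at least two bases}, so that $\max\{0,n_1-2\}+\max\{0,n_2-2\}=(n_1-2)+(n_2-2)=n(\sigma)-2$, and the two extra bases are exactly absorbed. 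Without that lower bound on $n_1$ and $n_2$ (e.g.\ if one half ended up with a single base) the complexity could actually increase, so some version of this argument is unavoidable; your proposal never supplies it. Your treatment of the item count and of termination is correct and matches the paper.
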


{\it Proof.} The input of the closed sections not containing $\mu$
into the complexity does not change. The section than contained
$\mu$ could be divided into two. In all cases except the last one
the total number of bases does not increase, therefore the
complexity cannot increase too. In the last case the number of bases
is increased by two, but the section is divided into two closed
sections, and each section contains at least two bases. Therefore
the complexity is the same. The number of items every time is
decreased by one.

We repeat linear elimination until no eliminable bases are left in
the equation. The resulting generalized equation is called a {\it
kernel} of $\Omega$ and we denote it by $Ker(\Omega)$. It is easy to
see that $Ker(\Omega)$ does not depend on a particular linear
elimination process. Indeed, if $\Omega$ has two different
eliminable bases $\mu_1, \mu_2$, and deletion of a part of $\mu_i$
results in an equation $\Omega_i$ then  by induction (on the number
of eliminations) $Ker(\Omega_i)$ is uniquely defined for $i = 1,2$.
Obviously, $\mu_1$ is still eliminable in $\Omega_2$, as well as
$\mu_2$ is eliminable in $\Omega_1$. Now eliminating $\mu_1$ and
$\mu_2$ from $\Omega_2$ and $\Omega_1$ we get one and  the same
equation $\Omega_0$. By induction $Ker(\Omega_1) = Ker(\Omega_0) =
Ker(\Omega_2)$ hence the result.

The following statement becomes obvious.
\begin{lemma} The generalized equation $\Omega$ (as a
system of equations over $F$) has a solution if and only if
$Ker(\Omega)$ has  a solution.
\end{lemma}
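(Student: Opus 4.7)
The plan is to proceed by induction on the number of linear elimination steps used to produce $Ker(\Omega)$ from $\Omega$. By Lemma 1 this process terminates, so it suffices to prove the biconditional for a single step: if $\Omega'$ is obtained from $\Omega$ by removing an eliminable base $\mu$ (possibly after an ET1-style cut at a boundary $\tau$), then $\Omega$ is solvable if and only if $\Omega'$ is.

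First I would dispatch the forward direction. The preliminary cut via ET1 induces an isomorphism of coordinate groups, so any solution of $\Omega$ passes unchanged to the cut equation. The subsequent deletion throws away one basic equation (for the sub-base between $\epsilon$ and $\tau$) and removes exactly those items and boundaries that appear in no other equation of $\Omega$; hence every equation of $\Omega'$ is already an equation satisfied by the original solution, and restriction yields a solution of $\Omega'$.

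The main content is the backward direction, where I would split into cases (a) and (b). In case (a), an item $h_i$ with $\gamma(h_i)=1$ lies under no base other than $\mu$ and is not involved in any boundary connection on a different base, so the unique equation of $\Omega$ mentioning $h_i$ is the basic equation of $\mu$. Given $\delta'$ on the other items, this equation has the form $W(h_i) = V$ with $V$ a word in values already specified, and it has a unique solution for $h_i$ in $F$, extending $\delta'$ to a solution of $\Omega$. In case (b), the boundary $\epsilon$ is a terminal boundary of $\mu$ that is not $\mu$-connected and meets no other base; the removed portion of $\mu$ between $\epsilon$ and $\tau$, together with the dual segment on $\bar\mu$, interacts with no other constraint of $\Omega$. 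I would therefore assign to the items between $\epsilon$ and $\tau$ the letters of the word read by $\bar\mu$ on the dual segment under $\delta'$, fulfilling exactly the deleted basic equation.

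The expected obstacle is checking in case (b) that the newly assigned items collide with no other equation or boundary connection; this rests crucially on the hypothesis that $\epsilon$ neither meets another base nor is $\mu$-connected, so the only constraint on the new items is the single basic equation I am arranging to satisfy. A secondary technical nuisance is the requirement in Definition 3 that solutions consist of nonempty reduced words; this can be handled by inserting cancelling letter pairs or by working with a sufficiently generic extension of $\delta'$, neither of which affects the solvability conclusion.
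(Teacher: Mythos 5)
Your overall plan — induction on the number of elimination steps, with a single-step restriction/extension argument — is a reasonable elementary route to the claim. For comparison, the paper treats this lemma as immediate and supplies no explicit proof; the real justification is the group-level discussion that follows, which shows via Tietze transformations that $F_{R(\Omega)} \simeq F_{R(\overline{Ker(\Omega)})} \ast F(Z)$ (Lemma~\ref{7-10}), after which solvability is read off from the existence of $F$-homomorphisms. Your argument is essentially the same Tietze move carried out at the level of tuples rather than coordinate groups, and the forward direction and case~(a) of the backward direction are in order, modulo the bookkeeping that in case~(b) ``restriction'' really means concatenating the two halves of the merged item.

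There is, however, a genuine error in your case~(b). You claim that the removed portion of $\mu$ between $\epsilon$ and $\tau$, together with the dual segment on $\bar\mu$, ``interacts with no other constraint'' and that you may therefore freely assign values to the items between $\epsilon$ and $\tau$. That is false: linear elimination in case~(b) removes only the sub-base $[\epsilon,\tau]$ of $\mu$, its dual, and the single boundary $\epsilon$; the items $h_{\epsilon},\ldots,h_{\tau-1}$ are \emph{not} deleted and can still be covered by other bases and tied by other boundary connections, as can the items under $\bar\mu_1$. The only new freedom gained when passing from $\Omega'$ back to $\Omega$ is how to split the value of the merged item $h_{\epsilon-1}h_{\epsilon}$ into the two values of $h_{\epsilon-1}$ and $h_{\epsilon}$; this split is then \emph{forced} by the deleted basic equation $h_{\epsilon}h_{\epsilon+1}\cdots h_{\tau-1} = W_{\bar\mu_1}$ (whose right-hand side is already determined by $\delta'$), and one must check that no other equation of $\Omega$ distinguishes $h_{\epsilon-1}$ from $h_{\epsilon}$ — which holds precisely because $\epsilon$ meets no other base and carries no boundary connection. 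Reassigning the interior items as you propose would generally break constraints already present in $\Omega'$. Finally, your handling of the ``nonempty reduced words'' requirement does not work as stated: inserting cancelling letter pairs violates the condition that $L_i(U), R_i(U)$ be reduced as written, and genericity does not obviously repair the forced value of $h_{\epsilon}$. The phrase ``as a system of equations over $F$'' in the lemma signals that solvability is meant in the group sense, i.e.\ nonemptiness of $Hom_F(F_{R(\Omega)},F)$, and under that reading the forced split always succeeds; this is also what the paper's isomorphism of coordinate groups establishes.
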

So linear elimination replaces $\Omega$ by $Ker(\Omega)$.

 Let us consider what happens on the group level in the process
of linear elimination. This is necessary only for the description of
all solutions of the equation.

We say that a variable $h_i$ {\it belongs to the kernel} ($h_i \in
Ker(\Omega)$), if either $h_i$ belongs to at least one base in the
kernel, or it is constant.

Also, for an equation $\Omega$ by $\overline{\Omega}$ we denote the
equation which is obtained from $\Omega$ by deleting all free
 variables. Obviously,
$$F_{R(\Omega)} = F_{R(\overline {\Omega})} \ast F(\bar Y)$$
where $\bar Y$ is the set of free variables in $\Omega$.

We start with the case  when a part of just one base is eliminated.
Let $\mu$ be an eliminable base in $\Omega = \Omega(h_1, \ldots,
h_\rho)$. Denote by $\Omega_1$ the equation resulting from $\Omega$
by eliminating $\mu$.

\begin{enumerate}
\item Suppose  $h_i \in \mu$ and $\gamma(h_i) = 1$. Let $\mu=\mu_1\ldots \mu_k$, where $\mu_1,\ldots ,\mu_k$ are the parts
between $\mu$-connected boundaries. Let $h_i\in\mu_j$. Replace the
basic equation corresponding to $\mu$ by the  equations
corresponding to $\mu_1,\ldots ,\mu_k$. Then the variable $h_i$
occurs only once in $\Omega$ - precisely in the equation $s_{\mu _j}
= 1$ corresponding to $\mu _j$. Therefore, in the coordinate group
$F_{R(\Omega)}$ the relation $s_{\mu _j}= 1$ can be written as $h_i
= w$, where $w$ does not contain $h_i$. Using Tietze transformations
we can rewrite the presentation of $F_{R(\Omega)}$ as
$F_{R(\Omega')}$, where $\Omega'$ is obtained from $\Omega$ by
deleting $s_{\mu _j}$ and the item $h_i$. It follows immediately
that
$$F_{R(\Omega_1)} \simeq  F_{R(\Omega')} \ast \langle h_i \rangle$$
and
\begin{equation}
\label{eq:ker1} F_{R(\Omega)} \simeq F_{R(\Omega')} \simeq
F_{R(\overline{\Omega_1})} \ast F(B)
\end{equation}
for some free or trivial group $F(B)$ .

\item Suppose now that $\mu$ satisfies  case b) above with respect to
a boundary $i$. Let $\mu=\mu_1\ldots \mu_k$. Replace the equation
$s_\mu = 1$ and the boundary equations corresponding to the boundary
connections through $\mu$ by the equations $s_{\mu_i}$, $i=1,\ldots
,k.$ Then in the equation $s_{\mu_k} = 1$ the variable $h_{i-1}$
either occurs only once or it occurs precisely twice and in this
event the second occurrence of $h_{i-1}$ (in $\bar\mu$) is a part of
the subword $(h_{i-1}h_i)^{\pm 1}$. In both cases it is easy to see
that the tuple
$$(h_1, \ldots, h_{i-2}, s_{\mu_k}, h_{i-1} h_i, h_{i+1}, \ldots,h_\rho)$$
forms a basis of the ambient free group generated by $(h_1, \ldots,
h_\rho)$ and constants from $A$. Therefore, eliminating the relation
$s_{\mu_k} = 1$, we can rewrite the presentation of $F_{R(\Omega)}$
in generators $\bar Y = (h_1, \ldots, h_{i-2}, h_{i-1} h_i, h_{i+1},
\ldots, h_\rho)$. Observe also  that any other basic  or boundary
equation $s_\lambda = 1$ ($\lambda \neq \mu$) of $\Omega$ either
does not contain variables $h_{i-1}, h_i$ or it contains them as
parts of the subword $(h_{i-1}h_i)^{\pm 1}$, that is, any such a
word $s_\lambda$ can be expressed as a word $w_\lambda(\bar Y)$ in
terms of generators $\bar Y$. This shows that
$$F_{R(\Omega)} \simeq G(\bar Y )_{R(w_\lambda(\bar Y) \mid \lambda \neq
\mu)} \simeq F_{R(\Omega')},$$ where $\Omega'$ is a generalized
equation obtained from $\Omega_1$ by deleting the boundary $i$.
Denote by $\Omega'$ an equation obtained from $\Omega'$ by adding a
free variable $z$ to the right end of $\Omega'$. It follows now that
$$F_{R(\Omega_1)} \simeq  F_{R(\Omega'')} \simeq F_{R(\Omega)} \ast
\langle z \rangle$$ and
\begin{equation}
\label{eq:ker2} F_{R(\Omega)} \simeq F_{R(\overline{\Omega'})} \ast
F(Z)
\end{equation}
or some free group $F(Z)$. Notice that all the groups and equations
which occur above can be found effectively.
\end{enumerate}

By induction on the number of steps in a cleaning process we obtain
the following lemma.
\begin{lemma}
\label{7-10} $F_{R(\Omega)} \simeq F_{R(\overline{Ker(\Omega)})}
\ast F(Z),$ where $F(Z)$ is a free group on $Z$.
\end{lemma}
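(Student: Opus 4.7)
The plan is to induct on the length of a linear elimination sequence that reduces $\Omega$ to its kernel $Ker(\Omega)$; equivalently, on the number of items $\rho(\Omega)$, which by the preceding lemma strictly decreases at each elimination step and hence gives well-founded induction.

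For the base case, if $\Omega$ admits no eliminable base then $Ker(\Omega) = \Omega$, and the identity $F_{R(\Omega)} \cong F_{R(\overline{\Omega})} \ast F(\bar Y)$ from the paragraph defining $\overline{\Omega}$ immediately yields the claim with $Z = \bar Y$.

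For the inductive step, pick an eliminable base $\mu$ of $\Omega$ and perform one linear elimination, producing an intermediate equation $\Omega^\ast$ (this is $\Omega_1$ in case (a) of the preceding discussion and $\Omega'$ in case (b)). The Tietze-transformation arguments of cases (1) and (2), culminating in equations (\ref{eq:ker1}) and (\ref{eq:ker2}), give
\begin{equation*}
F_{R(\Omega)} \cong F_{R(\overline{\Omega^\ast})} \ast F(B)
\end{equation*}
for some (possibly trivial) free group $F(B)$. Since $\rho(\overline{\Omega^\ast}) < \rho(\Omega)$, the induction hypothesis applies to $\overline{\Omega^\ast}$ and yields
\begin{equation*}
F_{R(\overline{\Omega^\ast})} \cong F_{R(\overline{Ker(\overline{\Omega^\ast})})} \ast F(Z').
\end{equation*}
Combining the two decompositions gives the desired form with $Z = Z' \sqcup B$, provided one can identify $\overline{Ker(\overline{\Omega^\ast})}$ with $\overline{Ker(\Omega)}$.

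The principal obstacle is precisely this final identification, i.e., verifying that the bar operation and the kernel commute with the choice of elimination path. Two facts are needed: first, $Ker(\Omega^\ast) = Ker(\Omega)$, which follows from the confluence of the linear elimination process established in the discussion defining $Ker(\Omega)$ (the diamond argument showing that any two eliminations can be completed to a common equation, so $Ker$ is independent of the order in which eliminable bases are removed); and second, $\overline{Ker(\overline{\Omega^\ast})} = \overline{Ker(\Omega^\ast)}$, which holds because free variables, being transported behind all items and meeting no base, are inert under linear elimination and can be stripped before or after computing the kernel without affecting the outcome. With these two observations in hand the induction closes and the free factors $F(B)$ accumulated along the elimination path, together with the free variables stripped at each stage, assemble into the single free group $F(Z)$ appearing in the statement.
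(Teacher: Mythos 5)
Your proof is correct and takes essentially the same route as the paper: both induct along the linear elimination sequence, use the single-step isomorphisms (\ref{eq:ker1}) and (\ref{eq:ker2}), and reduce the closing of the induction to the commutation of the bar operation with $Ker$ (which the paper states as $\overline{Ker(\Omega_j)} = \overline{Ker(\overline{\Omega_j})}$) together with the path-independence of the kernel. Your version is a bit more explicit about the well-founding (via the decreasing item count $\rho$) and about invoking the confluence argument to get $Ker(\Omega^{\ast}) = Ker(\Omega)$, but the substance matches the paper's proof.
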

{\it Proof.} Let
$$\Omega = \Omega_0 \rightarrow \Omega_1 \rightarrow \ldots \rightarrow
\Omega_l = Ker(\Omega)$$ be a linear elimination process for
$\Omega$. It is easy to see (by induction on $l$) that for every $j
\in [0,l-1]$
$$\overline{Ker(\Omega_j)} = \overline{Ker(\overline{\Omega_j})}.$$
Moreover, if $\Omega_{j+1}$ is obtained from $\Omega_j$ as in the
case 2 above, then (in the notation above)
$$\overline{Ker(\Omega_j)_1} = \overline{Ker(\Omega_j')} .$$
Now the statement of the lemma follows from the remarks above and
equalities (\ref{eq:ker1}) and (\ref{eq:ker2}).

\hfill $\square$
\end{enumerate}

\subsection{Rewriting process for $\Omega$}
\label{se:5.2}

In this section we describe a rewriting process for a generalized
equation $\Omega$.

\subsubsection{Tietze Cleaning and Entire Transformation}

In the rewriting process of generalized equations there will be two
main sub-processes:

 {\bf 1. Titze cleaning.} This process consists of repetition of the following four
transformations performed consecutively:
\begin{enumerate}
\item[(a)] Linear elimination,
\item[(b)] deleting all pairs of matched bases,
\item[(c)] deleting all complete bases,
\item[(d)] moving all free variables to the right and declare them non-active.
\end{enumerate}

{\bf 2. Entire transformation}.  This process is applied if $\gamma
(h_i)\geq 2$ for each $h_i$ in the active sections. We need a few
further definitions. A base $\mu$ of the equation $\Omega$ is called
a {\it leading} base if $\alpha(\mu) = 1$. A leading base is said to
be {\it maximal} (or a {\it carrier}) if $\beta(\lambda) \leq
\beta(\mu),$ for any other leading base $\lambda $. Let $\mu $ be a
carrier base of $\Omega.$ Any active base $\lambda \neq \mu$ with
$\beta(\lambda) \leq \beta(\mu)$ is called a {\it transfer} base
(with respect to $\mu$).

Suppose now that $\Omega$ is a generalized equation with
$\gamma(h_i) \geq 2$ for each $h_i$ in the active part of $\Omega$.
An {\em entire transformation} is a sequence of elementary
transformations which are performed as follows. We fix a carrier
base $\mu$ of $\Omega$. We transfer all transfer bases from $\mu$
onto $\bar\mu$. Now, there exists some $i < \beta (\mu)$ such that
$h_1,\ldots ,h_i$ belong to only one base $\mu$, while $h_{i+1}$
belongs to at least two bases. Applying (ET1) we cut $\mu$ along the
boundary $i+1$. Finally, applying (ET4) we delete the section
$[1,i+1]$.

Notice that neither process increases complexity.

\subsubsection{Solution tree}
 Let $\Omega $ be a
generalized equation. We construct a solution tree  $T(\Omega)$
(with associated structures), as a rooted tree oriented from the
root $v_0$, starting at $v_0$ and proceeding by induction on the
distance $n$ from the root.

If
$$v \rightarrow v_1 \rightarrow \cdots \rightarrow v_s \rightarrow u$$
is a path in $T(\Omega )$, then by $\pi (v,u)$ we denote composition
of corresponding epimomorphisms
$$\pi(v,u) = \pi(v,v_1) \cdots \pi(v_s,u).$$

If $v \rightarrow v'$ is an edge then there exists a finite sequence
of elementary or derived transformations from $\Omega_v$ to
$\Omega_{v'}$ and the homomorphism $\pi(v,v')$ is composition of the
homomorphisms corresponding to these transformations. We also assume
that active [non-active] sections in $\Omega_{v'}$ are naturally
inherited from $\Omega_v$, if not said otherwise.

Suppose a path in $T(\Omega)$ is constructed by induction up to a
level $n$, and suppose $v$ is a vertex at distance $n$ from the root
$v_0$. We describe now how to extend the tree from $v$.

We apply the Tietze cleaning at the vertex $v_n$ if it is possible.
If it is impossible ($\gamma (h_i)\geq 2$ for any $h_i$ in the
active part of $\Omega _{v}$), we apply the entire transformation.
Both possibilities  involve either creation of new boundaries and
boundary connections or creation of new boundary connections without
creation of new boundaries, and, therefore, addition of new
relations to $F_{R(\Omega _{v})}$. The boundary connections can be
made in few different ways, but there is a finite number of
possibilities. According to this, different resulting generalized
equations are obtained, and we draw edges from $v$ to all the
vertices corresponding to these generalized equations.

{\bf Termination condition:} 1. $\Omega_v$ does not contain active
sections. In this case the vertex $v$ is called a {\it leaf} or an
{\it end vertex}. There are no outgoing edges from $v$.

2. $\Omega _v$ is inconsistent. There is a base $\lambda$ such that
$\bar\lambda $ is oriented the opposite way and overlaps with
$\lambda$, or the equation implies an inconsistent constant
equation.

\subsubsection{Quadratic case}

Suppose $\Omega_v$ satisfies the condition $\gamma _i = 2$ for each
$h_i$ in the active part. Then $F_{R(\Omega _v)}$ is isomorphic to
the free product of a free group and a coordinate group of a
standard quadratic equation (to be defined below) over the
coordinate group $F_{R(\Omega ')}$ of the equation $\Omega '$
corresponding to the non-active part. In this case entire
transformation can go infinitely along some path in $T(\Omega )$,
and, since the number of bases if fixed, there will be vertices $v$
and $w$ such that $\Omega _v$ and $\Omega _w$ are the same. Then the
corresponding epimorphism $\pi :F_{R(\Omega _v)}\rightarrow
F_{R(\Omega _w)}$ is an automorphism of $F_{R(\Omega _v)}$ that
decreases the total length of the interval. For a minimal solution
of $F_{R(\Omega _v)}$ the process will stop.

\begin{definition}
A standard quadratic equation over the group $G$ is an equation of
the one of the following forms (below $d,c_i$ are nontrivial
elements from $G$):
\begin{equation}\label{eq:st1}
\prod_{i=1}^{n}[x_i,y_i] = 1, \ \ \ n > 0;
\end{equation}
\begin{equation}\label{eq:st2}
\prod_{i=1}^{n}[x_i,y_i] \prod_{i=1}^{m}z_i^{-1}c_iz_i d = 1,\ \ \
n,m \geq 0, m+n \geq 1 ;
\end{equation}
\begin{equation}\label{eq:st3}
\prod_{i=1}^{n}x_i^2 = 1, \ \ \ n > 0;
\end{equation}
\begin{equation}\label{eq:st4}
\prod_{i=1}^{n}x_i^2 \prod_{i=1}^{m}z_i^{-1}c_iz_i d = 1, \ \ \ n,m
\geq 0, n+m \geq 1.
\end{equation}

Equations (\ref{eq:st1}), (\ref{eq:st2}) are called {\em
orientable}, equations (\ref{eq:st3}), (\ref{eq:st4}) are called
{\em non-orientable}. Number $n$ is called a {\em genus} of the
equation (notation $gen (S).$)
\end{definition}

The proof of the following fact can be found in \cite{LS}.

\begin{lemma} \label{EC1}
Let $W$ be a strictly quadratic word over $G$. Then there is a $G$-
automorphism $f \in Aut_G(G[X])$ such that  $W^f$ is a standard
quadratic word over $G.$
\end{lemma}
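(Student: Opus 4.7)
The plan is to prove the lemma by induction on the length of the strictly quadratic word $W$, using elementary $G$-automorphisms of $G[X]$ of Nielsen-type form $x \mapsto u x v$ (with $u,v \in G[X \setminus \{x\}]$) to normalize $W$ step by step. Throughout, I will work up to cyclic permutation, which is achieved by a conjugating automorphism.

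First, I would isolate the constants. Each occurrence of an element $c \in G$ in $W$ lies between two variable letters; by a substitution of the form $x \mapsto x c$ on an adjacent variable one can absorb $c$ into a variable, at the cost of creating a new constant factor elsewhere. Iterating, all constants can be pushed past the "variable part" toward the tail of $W$. Standard manipulations then bring them into the conjugated form $\prod_{i=1}^{m} z_i^{-1} c_i z_i \cdot d$, where new variables $z_i$ arise from the positions where constants had to be conjugated through blocks of variables. This reduces the problem to normalizing the pure (coefficient-free) strictly quadratic part.

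Next, for the coefficient-free part I would pick any variable $x$ and analyze the two occurrences of $x$. If they appear with the same sign, after cyclic permutation write $W = x U x V$; the substitution $x \mapsto x U^{-1}$ turns this into $x^2 \cdot (U V)$, and induction on the number of variables (applied to $UV$, which is strictly quadratic in the remaining variables) yields a standard form. If the two occurrences have opposite signs, write $W = x U x^{-1} V$; some variable $y$ must have exactly one occurrence in $U$ and one in the other block, and one can apply substitutions $x \mapsto x \alpha$, $y \mapsto y \beta$ to extract a commutator $[x,y]$ at the front, reducing to a shorter strictly quadratic word in fewer variables to which induction applies.

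The main obstacle is the mixed case: the inductive step may produce both a square $x^2$ and a commutator $[y,z]$, whereas the four standard forms never mix them. The key algebraic identity (the one underlying the topological fact that a Klein bottle plus a handle equals three projective planes) is that $[y,z] \cdot x^2$ is $G$-automorphic to a product of three squares $y_1^2 y_2^2 y_3^2$. Applying this identity repeatedly absorbs every commutator whenever at least one square is present, forcing the final form to be purely non-orientable in that case. The remaining cases (all pairs same sign, or all pairs opposite signs) then fall directly into forms (\ref{eq:st1})--(\ref{eq:st4}), and the required automorphism $f$ is the composition of all substitutions performed along the way.
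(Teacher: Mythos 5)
The paper does not give its own proof of this lemma---it simply refers to \cite{LS}---so there is no ``paper's proof'' to compare against; what you have reconstructed is essentially the standard Lyndon--Schupp normalization argument, and it is the right approach. You correctly identify the three pillars: (i) elementary Nielsen-type $G$-automorphisms $x\mapsto uxv$ to collect the constants into the block $\prod_i z_i^{-1}c_iz_i\,d$ and to pull a leading $x^2$ or $[x,y]$ to the front, (ii) induction on the number of variables, and (iii) the ``handle plus cross-cap equals three cross-caps'' identity $[y,z]\,x^2\sim y_1^2y_2^2y_3^2$ to absorb commutators whenever a square is present, which is exactly what forces the final form to be one of the four types (\ref{eq:st1})--(\ref{eq:st4}) rather than a mixture.

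Two small points of care. First, in the opposite-sign case $W=xUx^{-1}V$ you assert that ``some variable $y$ must have exactly one occurrence in $U$ and one in the other block,'' but this can fail: it is perfectly possible that every variable appearing in $U$ has both of its occurrences inside $U$, so that $U$ and $V$ are disjoint strictly quadratic subwords. In that situation there is no linking $y$, and the right move is instead to conjugate every variable occurring in $U$ by $x^{-1}$ (this is a genuine element of $Aut_G(G[X])$ since it fixes $G$ pointwise), turning $xUx^{-1}V$ into $UV$ and eliminating $x$ from the word; one then recurses on the shorter word $UV$. Second, be careful with the claim that ``cyclic permutation is achieved by a conjugating automorphism'': the inner automorphism $g\mapsto u^{-1}gu$ of $G[X]$ does not fix $G$ pointwise when $u\neq 1$, so it is not a $G$-automorphism. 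Cyclic shifts are nevertheless available, but they must be realized by conjugating the \emph{variables} (e.g.\ $y\mapsto x^{-\varepsilon}yx^{\varepsilon}$ for all $y\neq x$, which does lie in $Aut_G(G[X])$); on the coefficient-free part this has the same net effect as conjugating the whole word, and the case of a leading constant is already absorbed into your constant-collection step. With these two repairs the induction closes and the argument is complete.
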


\subsubsection{Entire transformation goes infinitely}

Let now $\gamma (h_i)\geq 2$ for all $h_i$ in the active part, and
for some $h_i$ the inequality is strict. Let $(\Omega ,\delta )$ be
a generalized equation with a solution.  Define the {\bf excess}
$\psi$ of $(\Omega ,\delta ) $:

$$\psi =\Sigma _{\lambda} (\lambda ^{\delta})-2|I^{\delta}|,$$
where $\lambda$ runs through the set of bases participating in
entire transformation and $I^{\sigma}$ is the segment between the
initial point of the interval and the leftmost point of the base
that never participates (as carrier or transfer base).

It is possible that the cleaning after the entire transformation
decreases complexity. This occurs if some base is transferred onto
its dual and removed by (ET3). Otherwise, we use the same name for a
base of $\Omega _i$ and the reincornation of this base in $\Omega
_{i+1}.$ If we cannot apply Tietze cleaning after the entire
transformation, then we successively apply entire transformation. It
is possible that the entire transformation sequence for $\Omega $
goes infinitely, and the complexity does not decrease. If we apply
the entire transformation to $(\Omega ,\delta )$ and the complexity
does not decrease, then $\psi$ does not change.

We say that  bases $\mu$ and its dual of the equation $\Omega$ form
an overlapping pair  if $\mu$ intersects with its dual $\bar\mu$.

If $\phi _1$ and $\phi_2$ are two solutions of a generalized
equation $\Omega$ in $F(A,Y)$, then we define $\phi _1<\phi _2$ if
$\phi _2=\sigma\phi _1\pi$, where $\sigma$ is a canonical
automorphism of $F_{R(\Omega )}$ and $\pi$ is an endomorphism of
$F(A,Y)$, and $\sum _{i=1}^{\rho}(h_i)^{\phi_1}<\sum_{i=1}^{\rho}
(h_i)^{\phi_2}$. Then we can define minimal solutions of a
generalized equation.

\begin{theorem} \cite{KMIrc} Let $(\Omega, \delta )$ be a
generalized equation with a minimal solution. Suppose $(\Omega,
\delta )=(\Omega _0,\delta _0),\  (\Omega _1,\delta _1), \ldots $ be
the generalized equations (with solutions) formed by the entire
transformation sequence. Then one can construct a number $N=N(\Omega
)$ such that the sequence ends after at most $N$ steps.\end{theorem}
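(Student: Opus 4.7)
The plan is to split the sequence according to whether the complexity $\tau$ strictly decreases at each step. Since $\tau$ is a non-negative integer bounded above by $\tau(\Omega_0)$, complexity-decreasing steps occur at most $\tau(\Omega_0)$ times. It therefore suffices to bound the length of any maximal run of consecutive entire transformations along which $\tau$ is constant; by the observation preceding the theorem, the excess $\psi$ is also constant on such a run. The proof reduces to bounding the length of such a run.

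The next step is a pigeonhole argument. During a constant-complexity run the underlying combinatorial generalized equation (the collection of bases, their dualities and orientations, and the pattern of boundary connections) can take only finitely many shapes, because the number of bases and the number of items cannot increase under entire transformations and there are only finitely many combinatorial generalized equations with such bounded parameters. Hence, if the run is long enough, there exist indices $i<j$ for which $\Omega_{v_i}$ and $\Omega_{v_j}$ are combinatorially identical under a natural identification. The composition $\pi(v_i, v_j)$ then descends to an $F$-endomorphism of $F_{R(\Omega_{v_i})}$, which a closer analysis should identify as a canonical $F$-automorphism $\sigma$ built from Dehn twists on the elementary abelian splittings produced by the carrier bases of the intervening steps (which necessarily recur because the combinatorial pattern does).

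Given $\sigma$, the contradiction comes from comparing lengths. Between steps $i$ and $j$ each entire transformation deletes a non-empty leading section of the active part --- non-empty precisely because complexity constancy forbids trivial removals --- so $\sum_k |h_k^{\delta_{v_j}}| < \sum_k |h_k^{\delta_{v_i}}|$. Pulling $\sigma$ back through the epimorphisms along the path $v_0\to v_i$ produces a canonical automorphism $\widetilde\sigma$ of $F_{R(\Omega)}$ such that $\widetilde\sigma^{-1}\circ\delta$ is a solution of $\Omega$ with strictly smaller total length than $\delta$, contradicting minimality. The explicit bound $N(\Omega)$ then follows from $\tau(\Omega)$, the number of bases, and an effective count of combinatorial generalized equations of complexity at most $\tau(\Omega)$.

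The main obstacle I anticipate is verifying that the endomorphism coming from $\pi(v_i,v_j)$ is a \emph{canonical} automorphism in the sense required by the definition of minimality, rather than an arbitrary $F$-endomorphism. This calls for extracting explicit elementary abelian splittings of $F_{R(\Omega_{v_i})}$ from the repeated appearances of the same carrier bases, via a Nielsen-type periodicity analysis: a long stretch of entire transformations with the same carrier $\mu$ forces $\mu^{\delta_{v_i}}$ to be a high power of a short word, and this periodicity is precisely what produces the abelian splitting supporting the relevant Dehn twist. Once this periodicity-to-splitting step is in place, the counting is routine and $N(\Omega)$ can be written down explicitly.
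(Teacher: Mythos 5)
Your proposal is a plausible-looking outline, but it differs substantially from the paper's proof and contains gaps at exactly the points where the paper does its hard work.

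\textbf{The pigeonhole step is not justified.} You assert that during a constant-complexity run ``the number of bases and the number of items cannot increase under entire transformations and there are only finitely many combinatorial generalized equations with such bounded parameters,'' so some $\Omega_{v_i}=\Omega_{v_j}$ must recur. But the transfer step of the entire transformation may invoke (ET5), which subdivides an item $h_q$ into $h_{q'}h_{q'+1}$ and thus \emph{increases} the number of boundaries; nothing forces this to be compensated by the deletion of $[1,i+1]$. In the paper this pigeonhole is used, but only for the \emph{quadratic part} ($\gamma(h_i)=2$), where the structure is rigid enough to make it work; for the non-quadratic part the number of items is controlled by a completely different mechanism, namely the bound on the exponent of periodicity (Bulitko's lemma and Lemma 7). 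Your argument applies the pigeonhole to the whole active part without that control, and this is a real gap.

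\textbf{The ``canonical automorphism'' step, which you flag as the main obstacle, is the entire content of the theorem.} Even granting a recurrence $\Omega_{v_i}\cong\Omega_{v_j}$, the surjection $\pi(v_i,v_j)$ is an endomorphism of a residually free group and hence an automorphism, but there is no reason it lies in the fixed group $\mathcal A$ of canonical automorphisms against which minimality of $\delta$ is defined. The periodicity-to-splitting analysis you sketch is precisely Lemmas 6--8 of the paper: bound the exponent of periodicity of a minimal solution, pass to the abelian splittings that periodicity produces, and then quantify how much a $\mu$-reducing path shortens the interval ($\geq|\mu^{\delta}|/10$, Lemma 8). Since those lemmas already yield a direct length count, the pigeonhole becomes unnecessary; conversely, without them your ``contradiction with minimality'' is not available.

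\textbf{The pullback step is also problematic.} You pull $\sigma$ back along $v_0\to v_i$ to a canonical automorphism $\widetilde\sigma$ of $F_{R(\Omega)}$. But the composed map $F_{R(\Omega)}\to F_{R(\Omega_{v_i})}$ is only an epimorphism (ET5 case (a) is not an isomorphism), so an automorphism of the target does not lift to the source, canonical or otherwise. The paper sidesteps this entirely: it bounds $|I^{\delta}|\leq f(\Omega)\psi$ directly (Lemma 5), notes that $\psi$ is invariant along the entire-transformation run, bounds the exponent of periodicity a priori, and then counts $\mu$-reducing paths. This gives an explicit $N(\Omega)$ without needing any repetition or lifting argument. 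To repair your approach you would have to carry out the periodicity analysis and effectively compute the splittings in any case, at which point you would essentially be reproducing the paper's argument rather than shortcutting it.
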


We will prove  the key lemmas.
\begin{lemma} If $\delta$ is a solution minimal with respect to the
subgroup of $\mathcal A$ generated by the canonical Dehn twists
corresponding to the quadratic part of $\Omega$, then one can
construct a recursive function $f=f(\Omega )$ such that
$|I^{\delta}|\leq f\psi.$
\end{lemma}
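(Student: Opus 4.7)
The plan is to decompose the interval $I$ into a quadratic sub-interval (items covered exactly twice by participating bases) and an over-covered sub-interval, bound the over-covered sub-interval directly by $\psi$, and bound the quadratic sub-interval by invoking the classical linear estimate for minimal solutions of standard quadratic equations. Linearity in $\psi$ will then fall out by summing the two pieces.

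First I would rewrite the excess item by item. For $h_i \in I$ let $\gamma'(h_i)$ denote the number of participating bases (carriers or transfer bases at some stage of the entire transformation) that cover $h_i$. Since we are in the active part and the entire transformation has been initiated, $\gamma'(h_i)\geq 2$ on $I$, and
$$\psi \;=\; \sum_{\lambda}|\lambda^\delta| - 2|I^\delta| \;=\; \sum_{i\in I}\bigl(\gamma'(h_i)-2\bigr)|h_i^\delta|.$$
Partition $I = I_{=2}\sqcup I_{>2}$ according to whether $\gamma'(h_i)=2$ or $\gamma'(h_i)\geq 3$. From the formula above, $|I_{>2}^\delta|\leq \psi$, so the non-quadratic portion of $I^\delta$ is already controlled.

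Second, I would carry out the quadratic reduction. The participating bases whose support lies in $I_{=2}$, taken together with their dual bases, cover $I_{=2}$ exactly twice and so determine a standard quadratic equation $Q=1$, obtained by applying Lemma~\ref{EC1} section by section, over the coefficient group frozen from the rest of $\Omega$: namely, items of $I_{>2}$, pieces of bases that straddle $I_{=2}$ and $I_{>2}$, the non-active sections of $\Omega$, and the constants $a_1,\dots,a_m$. The canonical Dehn twists of the quadratic part of $\Omega$ act precisely as the generators of the mapping class group of this $Q$, so the minimality hypothesis in the lemma is the minimality of $\delta|_{I_{=2}}$ under the mapping-class-group action on solutions of $Q=1$ with coefficient values fixed.

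Third, I would invoke the classical linear bound for minimal solutions of a standard quadratic equation (going back to Malcev's original quadratic example, formalised by Makanin and Grigorchuk--Kurchanov): a minimal solution, in the mapping-class-group sense, of a standard quadratic equation of genus $g$ with $m$ conjugating constants has total length at most $c\cdot L_{\text{coef}}$, where $L_{\text{coef}}$ is the total coefficient length and $c$ depends recursively on $g$ and $m$ only. In our situation $g$ and $m$ are read off the combinatorics of $\Omega$, so we get $|I_{=2}^\delta|\leq c(\Omega)\,L_{\text{coef}}$. The coefficients are made out of pieces of $I_{>2}^\delta$ (of total length $\leq \psi$) and pieces sitting in the non-active sections or on constant bases (of total length bounded by a function of $\Omega$). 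Thus $L_{\text{coef}}\leq C_1(\Omega)\,\psi + C_0(\Omega)$, and combining with $|I_{>2}^\delta|\leq \psi$ yields
$$|I^\delta| \;=\; |I_{=2}^\delta| + |I_{>2}^\delta| \;\leq\; f(\Omega)\,\psi$$
for a recursively computable $f(\Omega)$ (the additive $C_0(\Omega)$ is harmless because in the regime $\psi=0$ the quadratic side has no coefficients to meet and minimality forces $|I_{=2}^\delta|=0$; alternatively absorb it by enlarging $f$).

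The hard part will be the reduction in step two: one must verify carefully that after possibly performing a few preparatory transformations, the bases over $I_{=2}$ really do assemble into a standard quadratic equation in the sense of Definition~11, and that the canonical Dehn twists of the quadratic part of $\Omega$ generate the full mapping class group acting on it — only then does the classical minimality estimate apply. Once this reduction is in place, the linear bound in step three is the standard Nielsen-type cut-and-paste argument and the claimed inequality follows by addition.
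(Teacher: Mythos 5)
Your decomposition of $I$ into $I_{=2}$ and $I_{>2}$ is exactly the first move in the paper's proof, and your direct item-by-item bound $|I_{>2}^\delta|\leq\psi$ is a clean observation that the paper leaves implicit. Where you diverge is in how the quadratic part is handled. The paper stays entirely inside the elimination-process framework: it reorganizes $\Omega$ so that quadratic closed sections sit to the left and non-quadratic ones to the right, then runs the entire transformation on the quadratic part and uses minimality (with respect to the canonical Dehn twists) to argue that the process cannot revisit the same configuration, hence terminates after a number of steps $k(\Omega)$ bounded combinatorially by the number of items and the complexity; at each step a quadratic base either forms a matched pair (whose free variable has length $1$ for a minimal solution) or is transferred onto the non-quadratic part of $I$, where its length is $\leq\psi$. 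Summing over the $\leq k(\Omega)$ steps gives the recursive $f$. You instead package the quadratic part as an abstract standard quadratic equation over a coefficient group and invoke a ``classical linear estimate for minimal solutions.'' This is the same mathematics repackaged: the paper's $k(\Omega)$-step termination argument \emph{is} the proof of that estimate, carried out at the level of generalized equations where it is effective. You buy a cleaner statement, but you then owe the verification you flag yourself — that the surviving bases over $I_{=2}$ really do assemble into one of the standard forms of Definition~11, and that the canonical Dehn twists coming from the splitting of $F_{R(\Omega)}$ generate the full group of automorphisms needed for the classical estimate to apply. That is not a side issue; it is the content of the proof.

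One more point deserves care. You write $L_{\text{coef}}\leq C_1(\Omega)\,\psi + C_0(\Omega)$ with $C_0(\Omega)$ accounting for pieces sitting in non-active sections or on constant bases. The constants $a_1,\dots,a_m$ contribute length bounded in terms of $\Omega$, fine — but the non-active sections are not controlled by $\Omega$ alone, their $\delta$-length can be arbitrary. The paper sidesteps this by arranging (via the preliminary cutting along end-points of quadratic sections) that bases leaving the quadratic part land on the non-quadratic part of the \emph{participating} interval $I$, not in the non-active sections, so that all the relevant coefficient material is already covered by the bound $|I_{>2}^\delta|\leq\psi$. If you want your reduction to a standard quadratic equation to close, you need to make the same arrangement explicit, so that no uncontrolled non-active material ends up among your quadratic coefficients.
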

This lemma shows that for a minimal solution the length of the
participating part of the interval is bounded in terms of the
excess. And the excess does not change in the sequence of entire
transformations when the complexity does not decrease.

\begin{proof} We can temporary change generalized equation
$\Omega$  such a way that it consists of one or several quadratic
closed sections (such that $\gamma (h_i)=2$ for any $h_i$) and
non-quadratic sections (such that  $\gamma (h_i)>2$ for any $h_i$).
Indeed, if $\sigma$ is a quadratic section of $\Omega$, we can cut
all bases in $\Omega$ through the end-points of $\sigma$. Moreover,
we will put the non-quadratic sections on the right part of the
interval. Denote by $\Omega_1$ this new generalized equation. We
apply the entire transformation to the pair ($\Omega _1, \delta
_1)$, where $\delta _1$ is obtained from $\delta$, and, therefore,
minimal. We can find a number $k(\Omega )$ such that after $k$
transformations
$$(\Omega _1,\delta _1)\rightarrow\ldots\rightarrow (\Omega _k,\delta _k)$$
all bases situated on the quadratic part will either form matched
pairs or will be transferred to the non-quadratic part. Indeed,
while we transforming the quadratic part we notice that:

1) two equations $\Omega _i$ and $\Omega _j$ for $i<j$ cannot be the
same, because then $\delta _j$ would be shorter than $\delta _i$,
contradicting the minimality.

2) there is only a finite number of possibilities for the quadratic
part since the number of items and complexity does not increase.

The sequence of consecutive quadratic carrier bases is bounded.
Therefore after a bounded number of steps, a quadratic coefficient
base is carrier, and we transfer a transfer base to the
non-quadratic part. For a minimal solution, the length of a free
variable corresponding to a matching pair is 1. And for each base
$\lambda$ transfered to the non-quadratic part, $\lambda ^{\delta}$
is shorter than the interval corresponding to the non-quadratic
part, and, therefore, shorter than $\psi$. This gives a hint how to
compute a function $f(\Omega)$. We can now return to the generalized
equation $\Omega$ and replace its solution by a minimal solution
$\delta $. \end{proof}

The {\bf exponent of periodicity} of a family of reduced words
$\{w_1,\ldots ,w_k\}$ in a free group $F$ is the maximal number $t$
such that some $w_i$ contains a subword $u^t$ for some simple
cyclically reduced word $u$. The exponent of periodicity of a
solution $\delta$ is the exponent of periodicity of the family
$\{h_1^{\delta},\ldots ,h_{\rho}^{\delta}\}.$

We call a solution of a system of equations in the group $F(A,Y)$
{\em strongly minimal} if it is minimal and cannot be obtained from
a shorter solution by a substitution.

\begin{lemma} (Bulitko's lemma).
 Let $S$ be a system of equations over a free group. The exponent
of periodicity of a strongly minimal solution can be effectively bounded.
\end{lemma}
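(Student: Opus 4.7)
The plan is to work with the generalized-equation form $\Omega$ associated to $S=1$ and to show that any sufficiently long periodic segment in a solution forces a length-reducing substitution, which contradicts strong minimality. Using the construction from Section 4.1, I first replace $S(X)=1$ by a finite family of generalized equations $\Omega_1,\dots,\Omega_k$ such that every reduced solution of $S=1$ in $F(A)$ is encoded by a solution of some $\Omega_j$ and, conversely, solutions of $\Omega_j$ yield solutions of $S=1$ of comparable length. The exponent of periodicity of the original solution $\phi$ is bounded above by that of the induced solution $\delta$ of $\Omega_j$, up to an additive constant depending only on $S$, so it suffices to bound the exponent of periodicity of strongly minimal solutions of a fixed generalized equation $\Omega$.

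Assume now that $\delta$ has exponent of periodicity $t$, witnessed by a subword $P=u^t$ of the reduced word $h_1^\delta\cdots h_\rho^\delta$, where $u$ is simple and cyclically reduced of length $p$. For each base $\mu$ whose image $\mu^\delta$ meets $P$ in a segment of length at least $2p$, Fine--Wilf's theorem implies that this segment is itself the restriction of a power of $u$, so $\mu$ carries a well-defined integer \emph{shift} recording the offset of its $u$-pattern from that of its dual $\bar\mu$. The key combinatorial step is: if $t$ is large relative to the number of bases $n(\Omega)$ and the number of items $\rho(\Omega)$, then by pigeonhole two distinct ``periodic positions'' inside $P$ carry the same configuration of (shifts of) bases straddling them, so there exist indices $j<k$ inside $P$ such that cutting $\delta$ along $j$ and $k$ and deleting the intermediate periodic block $u^{k-j}$ yields a consistent assignment to every base and boundary equation of $\Omega$.

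This deletion is realized, on the group side, by the composition of a canonical Dehn twist along a maximal abelian subgroup containing $u$ (which exists because of the periodic structure and the CSA property of $F_{R(\Omega)}$) with an endomorphism of the ambient free group $F(A,Y)$ that kills the block $u^{k-j}$; the resulting solution $\delta'$ satisfies $\sum_i|h_i^{\delta'}|<\sum_i|h_i^\delta|$ and dominates $\delta$ in the sense $\delta'<\delta$ of the ordering used to define strong minimality. This contradicts the strong minimality of $\delta$, so $t$ must be at most the threshold produced by the pigeonhole, which is an explicit recursive function of $n(\Omega)$, $\rho(\Omega)$ and the lengths of the constants in $\Omega$, hence an effective function of $S$.

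The main obstacle is making the deletion in the third step genuinely respect $\Omega$: one has to control the few bases that straddle the two chosen cut points $j$ and $k$, showing that their contribution can be re-routed (either by choosing $j,k$ at positions where no base straddles, which again costs only a pigeonhole factor in $n(\Omega)$, or by absorbing the straddling bases into a bounded corrective automorphism) without destroying the net length saving. This is exactly where Makanin's periodicity analysis, rather than Fine--Wilf alone, is needed: one must bound uniformly the ``boundary effects'' of $P$ so that the saving $(k-j)\cdot p$ strictly exceeds them. Once this is done, the explicit value of $N(S)$ is read off from the pigeonhole thresholds in Steps~2 and~3.
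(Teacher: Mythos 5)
Your proposal takes a genuinely different route from the paper. The paper does not pass through generalized equations at all in this proof: it first rewrites $S$ in triangular form (each equation of length~3), then for each equation $xyz=1$ with a given solution it considers the cancellation triangle and writes the $P$-decompositions of $x^\phi,y^\phi,z^\phi$ (with $P$ a simple cyclically reduced period). Matching maximal stable $P$-occurrences across the three sides of the triangle produces a finite \emph{linear Diophantine} system in the integer exponents $r_{ij}$: almost all equations are of the form $r_{ij}=r_{st}$, with at most one equation per triangle of the shape $r_{1j}+r_{2s}+c=r_{3t}$, $c\in\{2,3\}$, coming from the apex. Since only finitely many such linear systems can arise from $S$, one may replace $\{r_{ij}\}$ by a minimal positive solution $\{q_{ij}\}$ of the same system, substitute $P^{q_{ij}}$ for $P^{r_{ij}}$, and obtain a strictly shorter solution of $S$ from which the original is recovered by a substitution. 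The bound on the exponent of periodicity is then read off from the (effective) bound on minimal positive solutions of the finitely many linear systems. In contrast, you work at the level of a single generalized equation $\Omega$, use Fine--Wilf to assign a shift to each base overlapping the periodic block, and use a pigeonhole on base-configurations to locate a deletable sub-block. Both routes are in the Makanin tradition, but the paper's argument is more elementary: it needs no Fine--Wilf, no pigeonhole over base configurations, and, importantly, no appeal to canonical Dehn twists, CSA, or abelian splittings. Those ingredients belong to the \emph{next} lemma in the paper (the one about abelian splittings obtained from periodic structures), not to Bulitko's lemma.

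One point worth flagging as a near-gap rather than a stylistic difference: your contradiction with strong minimality is phrased as $\delta'<\delta$ via a Dehn twist composed with an endomorphism, but what the deletion of a periodic block actually produces is a shorter solution $\delta'$ from which $\delta$ is recovered by a \emph{substitution} (re-inserting powers of the period). That is precisely the second clause in the definition of strong minimality ("cannot be obtained from a shorter solution by a substitution"), and it is the clause the paper uses; invoking canonical automorphisms and the CSA property of $F_{R(\Omega)}$ is not only unnecessary here but also hard to make precise, since deleting a periodic block from the \emph{values} $h_i^\delta$ is not in any obvious way a canonical automorphism of the coordinate group. If you keep the generalized-equation route, the cleanest fix is to drop the Dehn-twist framing entirely and argue directly that $\delta$ factors as a substitution applied to the shortened $\delta'$, which already contradicts strong minimality.
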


{\em Proof.}  Let $P$ be a simple cyclically reduced word. A {\em
$P$-occurrence} in a word $w$ is an occurrence in $w$ of a word
$P^{\ep t}$, $\ep=\pm1$, $t\ge1$. We call a $P$-occurrence $v_1\cdot
P^{\ep t} \cdot v_2$ {\em stable} if $v_1$ ends with $P^\ep$ and
$v_2$ starts with $P^\ep$. Clearly, every stable $P$-occurrence lies
in a maximal stable $P$-occurrence. Two distinct maximal stable
$P$-occurrences do not intersect.

A {\em $P$-decomposition ${\mathcal D}_P(w)$ of a word $w$} is the
unique representation of $w$ as a product
$$
  v_0 \cdot P^{\ep_1 r_1} \cdot v_1 \cdot \ldots \cdot P^{\ep_m r_m} \cdot v_m
$$
where the occurrences of $P^{\ep_i r_i}$ are all maximal stable
$u$-occurrences in $w$. If $w$ has no stable $P$-occurrences then,
by definition, its $P$-decomposition is trivial, that is, it has one
factor which is $w$ itself.

By adding new variables we can transform  the system $S$ is the
triangular form, namely, such that each equation has length 3. If we
have equation $xyz=1$ with solution $x^{\phi}, y^{\phi}, z^{\phi}$,
then the cancellation table for this solution looks as the triangle
 in Fig. 2.

Let $$
 x^{\phi}= v_{10} \cdot P^{\ep_{11} r_{11}} \cdot v_{11} \cdot \ldots \cdot P^{\ep_{1,m} r_{1,m}} \cdot
 v_{1,m},
$$
$$
 y^{\phi}= v_{20} \cdot P^{\ep_{21} r_{21}} \cdot u_{21} \cdot \ldots \cdot P^{\ep_{2,n} r_{2,n}} \cdot
 v_{2,n},
$$

$$
 z^{\phi}= v_{30} \cdot P^{\ep_{31} r_{31}} \cdot v_{31} \cdot \ldots \cdot P^{\ep_{3,k} r_{3,k}} \cdot v_{3,k}
$$
be corresponding $P$-decompositions. From the cancellation table we
will have a system of equations on the natural numbers $r_{ij},\
i=1,2,3, j=1,\ldots max\{k,m,n\}.$ All equations except, maybe, one
will have form $r_{ij}=r_{st}$ for some pairs $i,j$ and $s,t$ and
one equation may correspond to the middle of the triangle. If the
middle of the triangle is inside a stable $P$-occurrence in
$z^{\phi}$, then the equation would either have form
$r_{1j}+r_{2s}+2=r_{3t}$ or $r_{1j}+r_{2s}+3=r_{3t}$. Notice that
since $x^{\phi}, y^{\phi}, z^{\phi}$ are reduced words, the middle
of the triangle cannot be inside a stable $P$-occurrence for more
than one variable.

If we replace a solution $r_{ij},\ i=1,2,3, j=1,\ldots
,max\{k,m,n\}$ of this system of equations by another positive
solution, say $q_{ij},\ i=1,2,3, j=1,\ldots ,max\{k,m,n\}$ and
replace in the solution $x^{\phi}, y^{\phi}, z^{\phi}$ stable
$P$-occurrences $P^{r_{ij}}$ by $P^{q_{ij}}$ we will have another
solution of the equation $xyz=1$.

Now, instead of one equation $xyz=1$ we take a system of equations
$S$. We obtain a corresponding linear system for natural
numbers $r_{ij}$'s. Let $R$ be the family of variables $r_{ij}$'s
that occur in the linear equations of length 3.  The number of such
equations is not larger than the number of triangles, that is the
number of equations in the system $S$. Therefore $R$ is a finite
family. Consider a system of all linear equations on $R$. It depends
on the particular solution of $S$, but there is a finite number of
possible such systems. We now can replace values of variables from
$R$ by a minimal positive solution, say $\{q_{ij}\}$, of the same
linear system (if $r_{ij}$ does not appear in any linear equation we
replace it by $q_{ij}=1$) and replace in the solution of the system
$S$ stable $P$-occurrences $P^{r_{ij}}$ by $P^{q_{ij}}$. We obtain
another solution of the system $S$. The length of a minimal positive
solution $\{q_{ij}\}$ of the linear system is bounded as in the
formulation of the lemma. The lemma is
proved.

\begin{lemma}
Suppose $F_{R(\Omega )}$ is not a free product with an abelian
factor, and there are solutions of $\Omega$ with unboundedly large
exponent of periodicity.   One can effectively find a number $M$ and
an abelian splitting of  $F_{R(\Omega )}$  (or a quotient obtained
from $F_{R(\Omega )}$  by adding commutation transitivity condition
for certain subgroups) as an amalgamated product with abelian vertex
group or as an HNN-extension (or both), such that the exponent of
periodicity of a minimal solution of $\Omega$ with respect to the
group of canonical automorphisms corresponding to this splitting and
the quadratic part (if exists) is bounded by $M$.
\end{lemma}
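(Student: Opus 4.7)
The plan is to extract from the unbounded exponent of periodicity genuine commutation relations inside $F_{R(\Omega)}$, organise them into a maximal abelian subgroup (imposing commutation transitivity where necessary), and then use the Dehn twists associated with the resulting abelian splitting to truncate arbitrarily long periodic powers from any minimal solution.

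First I would fix the period. By Bulitko's lemma applied to the triangulated form of $\Omega$, only finitely many simple cyclically reduced words $P$, of length bounded by an explicitly computable function of $\Omega$, can arise as periods of stable occurrences in solutions of $\Omega$. Passing to a subsequence, I may therefore assume a single $P$ such that for every $N$ some solution $\delta_N$ of $\Omega$ contains a maximal stable $P$-occurrence of length at least $N$. Marking the items $h_i$ on which these long stable $P$-powers sit, I identify a $P$-periodic strip: a set of sections of the interval that for all sufficiently large $N$ carry only $P$-periodic content.

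Next I would inspect the bases that lie inside the strip. For $N$ large, any base $\mu$ lying deep in the strip gives a basic equation $u^{\varepsilon(\mu)} = v^{\varepsilon(\bar\mu)}$ with $u,v$ long subwords of high powers of $P$; in a free group this forces $u$, $v$, and $P$ into a common cyclic subgroup. Lifting to $F_{R(\Omega)}$, each such base contributes an element that must centralise a conjugate of $P$. I collect all these elements, together with $P$ itself, and impose commutation transitivity to obtain a quotient $\widetilde{G}$ of $F_{R(\Omega)}$ in which they generate a single maximal abelian subgroup $C$. Cutting the interval along the boundary of the periodic strip bipartitions the bases of $\Omega$ into those inside and those outside the strip; the corresponding graph-of-groups decomposition of $\widetilde{G}$ is an amalgamated product or HNN-extension whose edge group is abelian and lies inside $C$. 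The hypothesis that $F_{R(\Omega)}$ is not a free product with an abelian factor is used here precisely to exclude the degenerate case in which the strip would decouple into a free abelian direct summand, ensuring that the splitting obtained is non-trivial.

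The canonical automorphisms of this splitting then include the Dehn twists $\phi_{P^k}$, $k\in\mathbb{Z}$. Given a solution $\delta$ whose items inside the strip display $P^t$, the twist $\phi_{P^{-k}}$ replaces $P^t$ by $P^{t-k}$ in the values of items on one side of the edge, while leaving the non-periodic part of $\delta$ untouched. For $\delta$ minimal with respect to these twists and to the Dehn twists coming from the quadratic part, the exponent of $P$ is therefore bounded by the largest non-periodic prefix or suffix of a base crossing the boundary of the strip, giving a computable bound $M=M(\Omega)$. I expect the main obstacle to lie in the previous step: verifying that the commutation relations produced by different periodic bases coalesce into a single abelian subgroup $C$ rather than several conflicting ones, and that the CT-quotient $\widetilde{G}$ remains sufficiently discriminated by solutions of $\Omega$ that minimality for $\Omega$ translates into minimality over the new splitting. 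Controlling the boundary connections across the strip, so that no periodic base half-escapes and produces conflicting centraliser requirements, is the delicate combinatorial point.
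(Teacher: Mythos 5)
The paper does not actually prove this lemma in the text you were given: it states only that \emph{``The proof of this lemma uses the notion of a periodic structure and can be found in (\cite{Imp}, Lemma 22) or in \cite{KMIrc}.''} So there is no in-paper argument to compare against word for word; what I can do is compare your sketch with the framework the paper points to. Your ``$P$-periodic strip'' is essentially the periodic structure of Makanin--Razborov, and the overall route -- extract a period, identify the periodically occupied portion of the interval, read off forced commutation relations from the bases living there, pass to a CT-quotient if needed so those relations define a single abelian vertex group, cut the interval along the boundary of the periodic part to get the splitting, then shorten by Dehn twists -- is indeed the right skeleton.

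That said, there are concrete problems with the sketch. First, Bulitko's lemma (as stated in the paper) bounds the \emph{exponent of periodicity of a strongly minimal solution}; it says nothing about bounding the \emph{length of the period} $P$, which is what you invoke it for. The bound on the lengths of relevant periods comes from the structure of the generalized equation itself (lengths of bases, number of items), not from Bulitko's lemma, so this step is both misattributed and unargued. Second, the crucial combinatorial content of a periodic structure is exactly what lets you go from ``each base inside the strip individually forces a commutation with a conjugate of $P$'' to ``all of them together generate one coherent abelian edge/vertex group,'' and this requires tracking how the period propagates across boundary connections and dual bases, not merely observing that each basic equation lies inside a power of $P$. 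You flag this yourself as ``the delicate combinatorial point,'' and it is; without it the CT-quotient $\widetilde G$ is not shown to be proper and nontrivial, nor is it shown that minimal solutions of $\Omega$ give minimal solutions for the new splitting. Third, the final sentence -- that minimality for the Dehn twists immediately bounds the exponent of $P$ by the size of the non-periodic prefix or suffix of a crossing base -- is a plausibility argument, not a proof; the actual bound $M$ in the cited references comes out of a careful analysis of the periodic structure, combined with Bulitko-type bounds, and is where the ``(if exists)'' clause about the quadratic part enters. In short: correct framework, but the two load-bearing steps (bounding the period and verifying the periodic structure yields a single abelian splitting with the required minimality transfer) are exactly the ones left as acknowledged gaps.
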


The proof of this lemma uses the notion of a periodic
structure and can be found in (\cite{Imp}, Lemma 22) or in
\cite{KMIrc}.

 Consider an infinite path in $T(\Omega)$ corresponding to an
infinite sequence in entire transformation
\begin{equation}
\label{3.6} r = v_1 \rightarrow v_2 \rightarrow \cdots \rightarrow
v_m.
\end{equation}

 Let $\delta$ be a
solution of $\Omega $. The following lemma gives the way to
construct a function $f_1$ depending on $\Omega $ such that for any
number $M$, if the sequence of entire transformations for $(\Omega
,\delta )$ has $f_1(M)$ steps, then $|I^{\delta}|>M\psi .$

Denote by $\mu_i$ the carrier base of the equation $\Omega_{v_i}$.
The path (\ref{3.6}) will be called $\mu$-reducing if $\mu_1 = \mu$
and one of the following holds:

1. $\mu_2$ does not overlap with its double and $\mu$ occurs in the
sequence $\mu_1, \ldots, \mu_{m-1}$ at least twice.

2.  $\mu_2$  overlaps with its double and $\mu$ occurs in the
sequence \newline $\mu_1, \ldots, \mu_{m-1}$ at least $M+2$ times,
where $M$ is the exponent of periodicity of $\delta$.

The following lemma is just an easy exercise.
\begin{lemma} In a $\mu$-reducing path the length of $I^{\delta}$ decreases at
least by $|\mu ^{\delta}|/10$.\end{lemma}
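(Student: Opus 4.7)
The plan is to measure the decrease in $|I^{\delta}|$ step-by-step along the $\mu$-reducing path and then sum. The key quantitative fact is that in a single entire transformation with carrier base $\lambda$, the section $[1,i+1]$ deleted by (ET4) has $\delta$-length exactly equal to the distance, measured along $\lambda$, from $\alpha(\lambda)=1$ to the leftmost endpoint of a transferred base on $\lambda$. So each carrier step contributes to the decrease in $|I^{\delta}|$ the length of the ``free'' prefix of the current carrier, and the two cases of the definition of $\mu$-reducing path correspond to two different mechanisms that force this prefix to be, on average, a definite fraction of $|\mu^{\delta}|$.

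For Case 1 (where $\mu_{2}$ does not overlap $\bar\mu_{2}$), I would first show that the $\mu_{2}$-step alone removes a substantial portion of the interval: since $\mu_{2}$ and $\bar\mu_{2}$ are disjoint bases that both touch the new left end of the interval after the first transformation, the cut length produced at the $\mu_{2}$-step is at least a fixed fraction of $\min(|\mu_{2}^{\delta}|,|\bar\mu_{2}^{\delta}|)$. For $\mu$ to return as carrier, the surviving right part of $\mu$ must migrate all the way to the leading position, which forces the cumulative intermediate deletions to cover the piece of the interval separating the first and second locations of $\mu$. Combining this with the free prefixes chopped off at the two $\mu$-carrier steps themselves, the total decrease in $|I^{\delta}|$ exceeds $|\mu^{\delta}|/10$.

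For Case 2 (where $\mu_{2}$ overlaps $\bar\mu_{2}$), the overlap forces a periodic structure on the leading portion of the interval: the standard argument (as in Lemma 7 and the Bulitko bound) gives that $\mu^{\delta}$ is essentially a power $P^{k}$ of a simple cyclically reduced period $P$, with $k\le M$. Each subsequent occurrence of $\mu$ as carrier chops exactly one copy of $P$ off the left of the interval, because the next transferred base is shifted by one period. Since $\mu$ appears at least $M+2$ times, the total deleted length is at least $M|P|\ge|\mu^{\delta}|$, well above the required $|\mu^{\delta}|/10$. The slack of two periods over the $k\le M$ bound absorbs the partial periods that may be consumed by intermediate non-$\mu$ carriers or lost at the boundary where $\mu$ first loses its periodic alignment.

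The main obstacle is the geometric bookkeeping in Case 1: one must track carefully how $\mu$, $\bar\mu$, and the neighboring bases migrate under ET1--ET5 between two consecutive appearances of $\mu$ as carrier, and verify that the intermediate carrier steps actually delete a constant fraction of $|\mu^{\delta}|$ rather than merely shuffling bases. Once the periodic pattern produced by the overlap is identified, Case 2 reduces to a clean counting argument, so the entire proof hinges on making the disjointness hypothesis in Case 1 quantitative.
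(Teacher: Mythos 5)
Your two-case split (non-overlapping vs.\ overlapping $\mu_2$, $\bar\mu_2$) matches the paper's organization, and your Case~2 correctly identifies the periodic structure and exponent-of-periodicity bound as the central tools. However, both cases as you present them have gaps that keep this from being a proof.

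In Case~1 you explicitly concede that you cannot close the ``geometric bookkeeping'' and only gesture at intermediate deletions covering the gap between the two $\mu$-carrier positions. The paper closes this case with a single clean observation you are missing: when $\mu_2$ carries at the second step, the transfer base $\mu$ is moved onto $\bar\mu_2$, and because at most half of $\mu_2$ overlaps its dual, the left endpoint of (the reincarnation of) $\mu$ lands to the right of the midpoint of $\mu_2$. For $\mu$ to become a leading base again, the left endpoint of the interval must advance at least to that midpoint; this forced deletion is already more than half of $|\mu^{\delta}|$, independently of what the intermediate carriers do. (The paper also separately handles the sub-case $\mu_1=\mu_2$, where the first cut alone removes more than half of $\mu$.) Your appeal to a ``fixed fraction of $\min(|\mu_2^{\delta}|,|\bar\mu_2^{\delta}|)$'' and to cumulative intermediate cuts is not quantitatively anchored to $|\mu^{\delta}|$, which is what the lemma requires.

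In Case~2 your counting is too optimistic. You assert that each occurrence of $\mu$ as carrier chops exactly one copy of $P$ and that the total deletion is at least $M|P|\ge|\mu^{\delta}|$, but neither claim is established. The periodic alignment can break before $\mu$ has carried $M+2$ times (e.g.\ the carrier between two appearances of $\mu$ may be too short relative to $|P|$, or the left end of the interval may pass the end of the periodic prefix); in that case the per-step deletion is no longer a full period. The paper handles precisely this possibility: writing $\mu^{\delta(2)}=P^rP_1$ with $r\le M$, it shows that either an alignment condition fails at some step --- in which case the interval has already lost at least $(r-2)|P|$ --- or each return of $\mu$ does delete a period; combining the branches gives only the weaker bound of about $\tfrac15|\mu^{\delta(2)}|\ge\tfrac1{10}|\mu^{\delta}|$, not $|\mu^{\delta}|$. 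Without that dichotomy your argument does not go through, and the stronger constant you claim is not attainable in general.
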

\begin{proof} Case 1. $\mu=\mu _1\neq \mu _2$, and not more than half of
$\mu _2$ overlaps with its double. Then after two steps the leftmost
boundary of the reincornation of $\mu$ will be to the right of the
middle of $\mu _2$. Therefore by the time when the reincornation of
$\mu$ becomes a carrier, the part from the beginning of the interval
to the middle of $\mu _2$ will be cut and removed. This part is
already longer than half of $\mu .$

Case 2.  $\mu=\mu _1= \mu _2$, and $\mu _2$ (second reincornation of
$\mu$) does not overlap with its double. Then on the first step we
cut the part of the interval that is longer than half of $\mu$.

Case 3. $\mu _2$ overlaps with its double. Denote by $\mu ^{\delta
(i)}$ the value of the  reincornation of $\mu^{\delta}$ on step $i$
and by $[1,\sigma]^{\delta (i)}$ the word corresponding to the
beginning of the interval until boundary $\sigma$ on step $i$. Then
$[1,\alpha (\bar \mu _2))]^{\delta (2)}=P^d$ for some cyclically
reduced word $P$ which is not a proper power and  $\mu ^{\delta
(2)}$,  $\mu_2 ^{\delta (2)}$ are beginnings of $[1,\beta
(\bar\mu_2)]^{\delta (2)}$ which is a beginning of $P^{\infty}$.

 We have
 \begin{equation}\label{1}
 \mu ^{\delta (2)}=P^rP_1 , r\leq
 M\end{equation}

Let $\mu _{i_1}=\mu_{i_2}=\mu$ for $i_1<i_2$ and $\mu _i\neq\mu$ for
$i_1<i<i_2$. If \begin{equation}\label{2} | \mu _{i_1+1}^{\delta
(i_1+1)}|\geq 2|P|\end{equation} and $[1,\rho _{i_1+1}+1]^{\delta
(i_1+1)}$ begins with a cyclic permutation of $P^3$, then
$$|[1,\alpha(\bar\mu_{i_1+1})]^{\delta (i_1+1}|\geq |P|.$$
The base $\mu$ occurs in the sequence $\mu _1,\ldots ,\mu_{m-1}$ at
least $r+1$ times, so either (\ref{2}) fails for some $i_1\leq m-1$
or the part of the interval that was removed after $m-1$ steps is
longer than $max\{|r-3||P|,|P|\}$.

If (\ref{2}) fails, then $|[1,\alpha (\mu _{i_1})]^{\delta
(i_1)}|\geq (r-2)|P|.$ So everything is reduced to the case when the
part of the interval that was removed after $m-1$ steps is longer
than $max\{|r-3||P|,|P|\}$. Together with (\ref{1}) this implies
that in $m-1$ steps the length of the interval was reduced at least
by $\frac{1}{5}|\mu ^{\delta (2)}|$ which is not less than
$\frac{1}{10}|\mu^{\delta}|.$
\end{proof}

We can now finish the proof of Theorem 9. Let $L$ be the family of
bases such that every base $\mu\in L$  occurs infinitely often as a
leading base. Suppose a number $m$ is so big that for every base
$\mu$ in $L$, a $\mu$-reducing path occurs more than $20nf$ times
during these $m$ steps. Since $\Sigma |\mu ^{\delta _m}|\geq \psi $,
where we sum over all the participating bases, at least for one base
$\lambda\in L$ , $|\lambda ^{\delta _m}|\geq \psi /2n$. Moreover,
$|\lambda ^{\delta _i}|\geq |\lambda ^{\delta _m}|\geq \psi /2n$ for
all $i\leq m$. Since a $\lambda$-reducing path occurs more than
$20nf$ times, the length of the interval would be decreased in $m$
steps by more than it initially was. This gives a bound on the
number of steps in the entire transformation sequence for $(\Omega
,\delta )$ for a minimal solution $\delta$. Theorem 9 has been
proved.

We will now prove Theorem 4. We replace in the tree $T(\Omega )$
every infinite path corresponding to an infinite sequence of entire
transformation of generalized equations beginning at $\Omega _{v_i}$
by a loop corresponding to automorphisms of $F_{R(\Omega _{v_i})}$
and  finite sequence of transformations for a minimal solution of
$\Omega _{v_i}$.  At the end of this sequence of transformations we
obtain a generalized equation $\Omega _{v_j}$ such that either it
has smaller complexity than $\Omega _{v_i}$ or $F_{R(\Omega
_{v_j})}$ is a proper quotient of $F_{R(\Omega _{v_i})}$. Any proper
chain of residually free quotients is finite. Therefore we obtain a
finite graph (the only cycles are loops corresponding to
automorphisms) and its maximal subtree . The equation $S(X)=1$ has a
non-trivial solution if and only if we are able to construct
$T_{sol}(\Omega )$ at least for one of the generalized equations
corresponding to the system $S(X)=1$.

Let $v_0\rightarrow v_1\rightarrow \ldots v_k$ be a path in
$T_{sol}(\Omega )$ from the root to a leaf. Let $v_{i-1}$ ($i\geq
1$) be the first vertex such that there is a loop corresponding to
automorphisms of $F_{R(\Omega _{v_{i-1}})}$ attached to $v_{i-1}$.
And let   $v_{j}$ be the next such vertex or (if there is no such a
second vertex) $v_j=v_k$. All the homomorphisms from $F_{R(\Omega
)}$ to $F$ in the fundamental set corresponding to the path from
$v_0$ to $v_k$ factor through a free product of $F_{R(\Omega
_{v_{i-1}})}$ and, maybe, some free group (that occured as a result
of Titze cleaning when going from $\Omega _{v_0}$ to $\Omega
_{v_{i-1}}$). All the homomorphisms from $F_{R(\Omega _{v_{i-1}} )}$
to $F$ in the fundamental set corresponding to the path from
$v_{i-1}$ to $v_k$ are obtained by the composition of a canonical
automorphism $\sigma$ of $F_{R(\Omega _{v_{i-1}})}$, canonical
epimorphism $\pi=\pi _i\ldots\pi _j$ from $F_{R(\Omega _{v_{i-1}})}$
onto $F_{R(\Omega _{v_j})}$ and a homomorphism from the fundamental
set of homomorphisms from $F_{R(\Omega _{v_j})}$ to $F$. The
composition $\sigma\pi $ is a solution of some system of equations,
denoted by $S_1(H_1,H_2,H_3,H^{\pi},A)=1$, over $F_{R(\Omega
_{v_j})}$. (Notice that by $H$ we denote a generating set of
$F_{R(\Omega _{v_{i-1}})}$ modulo $F(A)$). Therefore $H^{\pi}$ and
$A$ are the sets of coefficients of this system.) The system
$S_1(H_1,H_2,H_3,H^{\pi},A)=1$ consists of three types of
subsystems:

1. Quadratic system in variables $h\in H_1$, where $H_1$ is the
collection of items in the quadratic part of $\Omega _{v_{i-1}}$.
This system is obtained from $\Omega$ by replacing in each basic,
and boundary equation each variable $h$ in the non-quadratic part by
the coefficient $h^{\pi}$.

2. For each splitting of $F_{R(\Omega _{v_{i-1}})}$ as an
amalgamated product with a free  abelian vertex group of rank $ k$
from the second part of Lemma 6, we reserve $k$ variables
$x_1,\ldots ,x_k\in H_2$ and write commutativity equations
$[x_i,x_j]=1$ for $i,j=1,\ldots,k$ and, in addition, equations
$[x_i,u^{\pi}]=1$ for each generator $u$ of the edge group.

 3. For each variable $x\in
H_3$ there is an equation $xu^{\pi}x^{-1}=v^{\pi}$, where $u$ is a
generator of an edge group corresponding to a splitting of
$F_{R(\Omega _{v_{i-1}})}$ as an HNN-extension from the third part
of Lemma 6 and $x$ corresponds to the stable letter of this
HNN-extension.

Notice that $\sigma\pi$ is a solution of the system
$S_1(H_1,H_2,H_3,H^{\pi},A)=1$  over the group $F_{R(\Omega
_{v_j})}$.

4. For each $x\in H_3$ and corresponding edge group we introduce a
new variable $y$ and equations $[y,u^{\pi}]=1$, where $u$ is a
generator of the edge group. Let $H_4$ be the family  of these new
variables.

Denote by $S_2(H_1,H_2,H_4,H^{\pi},A)=1$ the system of equations 1,2
and 4. This system is NTQ over $F_{R(\Omega _{v_j})}$. Make a
substitution $x=x^{\pi}y$. Using this substitution every solution
$\sigma\pi$ of the system $S_1(H_1,H_2,H_3,H^{\pi},A)=1$ in the
group $F_{R(\Omega _{v_j})}$ can be obtained from a solution of the
NTQ system $S_2(H_1,H_2,H_4,H^{\pi},A)=1$.

We made the induction step. Since $T_{sol}(\Omega )$ is finite, the
proof of Theorem 4 can be completed by induction.

It is clear that Theorem 9 is the main technical result required for
the proof of Theorem 8 or, in other terminology, the  base for our
``shortening argument''. The proof of Theorem 9 is technically
complicated because everything is done effectively (the algorithms
are given). For comparison we will give a non-constructive proof of
Theorem 9 using the following lemma.

\begin{lemma} Let $\Omega _0,\ \Omega _1, \ldots $ be the
generalized equations formed by the entire transformation sequence.
Then one of the following holds. \begin{enumerate} \item the
sequence ends, \item for some $i$ we obtain the quadratic case on
the interval $I$, \item we obtain an overlapping pair $\lambda $,
$\bar\lambda $ such that $\lambda$ is a leading base, $\lambda
^{\delta}$ begins with some $n$'th power of the word $[\alpha
(\lambda),\alpha (\bar\lambda )]^{\delta }$ and there are solutions
${\delta}$ of $\Omega _i$ with number $n$ arbitrary large (with
arbitrary large exponent of periodicity).
\end{enumerate}\end{lemma}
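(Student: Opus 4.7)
I would argue by exhaustion of alternatives: assume that the sequence is infinite (so alternative 1 fails) and that no $\Omega_i$ falls into the quadratic case, i.e.\ in every $\Omega_i$ some item $h_j$ satisfies $\gamma(h_j)\geq 3$ (alternative 2 fails); I will then deduce alternative 3.

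First I would use that the complexity $\tau(\Omega_i)$ is a non-negative integer and is non-increasing along the entire transformation sequence (as noted in the description of entire transformations). Hence it is eventually constant from some index $i_0$ onward. In this stable tail no base is deleted by the Tietze-cleaning steps embedded in the entire transformation, and the combinatorial structure of bases is permuted through a finite catalog of configurations. Since the set of bases is finite, by the pigeonhole principle there is a base $\mu$ that occurs as the carrier (leading base) infinitely often in $(\Omega_i)_{i\geq i_0}$.

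Next I would apply the $\mu$-reducing path lemma proved just above. Suppose for contradiction that alternative 3 also fails: no $\Omega_i$ exhibits a leading overlapping pair $\lambda,\bar\lambda$ for which $\lambda^\delta$ begins with arbitrarily large powers of $[\alpha(\lambda),\alpha(\bar\lambda)]^\delta$. Fix a solution $\delta$ of $\Omega_{i_0}$ with exponent of periodicity $M$. Then every block of consecutive steps between two occurrences of $\mu$ as carrier is $\mu$-reducing: either the next carrier does not overlap its dual (and $\mu$ recurs within the block), or it does overlap, in which case the number of intervening recurrences of $\mu$ is automatically at most $M+1$, since otherwise the overlap would force $\lambda^\delta$ to begin with $P^{M+2}$ where $P=[\alpha(\lambda),\alpha(\bar\lambda)]^\delta$, and letting $M$ grow over a family of solutions would give alternative 3. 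By the $\mu$-reducing path lemma, each such block shortens $|I^\delta|$ by at least $|\mu^\delta|/10>0$, while the earlier length estimate bounds $|I^\delta|$ in terms of the excess $\psi$ for a minimal $\delta$. Thus only finitely many $\mu$-reducing blocks can occur, contradicting the fact that $\mu$ is the carrier infinitely often.

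Finally I would make the overlap-to-periodicity translation explicit. When $\lambda$ overlaps its dual $\bar\lambda$ with $\lambda$ leading, the basic equation for $(\lambda,\bar\lambda)$ says that $\lambda^\delta$ equals its own shift by the prefix $P=[\alpha(\lambda),\alpha(\bar\lambda)]^\delta$; a standard shift-period argument in the free monoid then forces $\lambda^\delta$ to be a prefix of $P^\infty$, so $\lambda^\delta = P^n P'$ for some $n\geq 1$. The main obstacle is to verify that, across the family of admissible $\delta$, the exponent $n$ really is unbounded and not pinched by some constant independent of $\delta$; this is the step that calls upon Bulitko's lemma and the periodic-structure machinery invoked in the preceding lemma on abelian splittings, which guarantees the existence of solutions with arbitrarily large exponent of periodicity $M$ — and it is precisely these solutions, injected into the overlap configuration produced above, that witness $n\to\infty$ and complete the proof of alternative 3.
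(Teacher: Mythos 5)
Your proposal takes a genuinely different route from the paper, but it contains logical gaps that I don't think can be repaired without essentially abandoning the approach.

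The paper's proof is a compactness argument: since the branch is infinite, for each $i$ there is a solution $\delta_i$ of $\Omega$ factoring through $\Omega_i$. One rescales each $\delta_i$ so that $I^{\delta_i}$ has length one, records the lengths and midpoints of all bases as a point $x_i\in[0,1]^m$, and passes to a convergent subsequence with limit $x$. Entire transformation only moves initial points of bases to the right and never lengthens a base, so in this limit metric every base has a limiting position, and since the normalized excess stays bounded away from zero, some base $\lambda$ that participates infinitely often has length not tending to zero. If $\lambda$ is eventually the unique carrier this forces case 3 directly; if $\lambda$ is also carried infinitely often, then whenever $\lambda$ is a carrier some midpoint jumps by the distance between the midpoints of $\lambda$ and $\bar\lambda$, and since all midpoints converge, $\lambda$ and $\bar\lambda$ must converge to the same position — an overlapping pair — and approximating the limit by $\delta_n$ for large $n$ yields case 3. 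Your proof instead tries to run the quantitative $\mu$-reducing path estimate directly.

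The first gap is that you fix a single solution $\delta$ of $\Omega_{i_0}$ and then argue that $\mu$-reducing blocks would shorten $|I^{\delta}|$ too many times. But a single fixed solution $\delta$ only witnesses finitely many steps of the branch — that is precisely the content of Theorem 9, which your estimate reproves. The infinite branch $\Omega_0,\Omega_1,\ldots$ is only consistent with a \emph{family} $\delta_1,\delta_2,\ldots$ of solutions, each factoring through one more level, and these get longer as $i$ grows. Deriving a contradiction for one fixed $\delta$ does not contradict the infinity of the branch; it shows that the path for that particular $\delta$ is finite, which was already known. Without a device such as the rescaling/subsequential-limit argument, there is no way to extract a single limiting geometric configuration from the whole family, which is exactly the missing step.

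The second gap is in the final paragraph: you invoke Bulitko's lemma and the periodic-structure lemma as if they \emph{supply} solutions with arbitrarily large exponent of periodicity. They do the opposite. Bulitko's lemma says strongly minimal solutions have bounded exponent of periodicity; Lemma 6 is hypothetical — \emph{if} there are solutions of unboundedly large exponent of periodicity, \emph{then} one can read off an abelian splitting. Neither manufactures the large-exponent solutions you need. The existence of solutions $\delta$ of $\Omega_i$ with $n\to\infty$ is precisely the conclusion of case 3, and it has to come from the compactness argument (the overlap becomes exact in the limit, so finite-stage approximants must exhibit larger and larger prefix-powers of $P$), not from the bounding lemmas. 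Related to this, the failure of case 3 bounds the exponent $n$ in overlap configurations, but it does not by itself give a uniform bound on the exponent of periodicity across the whole family $\{\delta_i\}$, which is what your $M$-bounded $\mu$-reducing-path count implicitly requires.
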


\begin{proof}  We assume cases 1 and 2 do not hold. Then, our
sequence is infinite and we may assume that every base that is
carried is carried infinitely often, and that every base that
carries does so infinitely often. So every base that participates
does so infinitely often. We also assume the complexity does not
change and that no base is moved off the interval.

Let $\Omega$ be a generalized equation (with solution) and let
$$B=\Omega_1,\ldots ,\Omega_n,\ldots $$ be an infinite branch. Let
$\delta_1,\delta_2,\ldots $ be a set of solutions of $\Omega$ such
that $\delta_i$ "factors" through $\Omega_i$.

If we rescale the metric so that $I^{\delta_i} $ has length 1, then
each $\delta_i $ puts a length function on the items of $\Omega,$ in
particular we assume that each base has length and midpoint between
0 and 1. This means that for each $\delta_i$ there is a point $x_i$
in $[0,1]^m$, where this point represents the lengths of the bases
and the items as well as their midpoints in the normalized metric.

We pass to a subsequence of $\delta_i$ (omit the double subscript)
such that the $ x_i$ converge to a point $x$ in $[0,1]^m$. We call
the limit {\em a metric  on} $(\Omega,$ and denote it $\delta^*)$.

Normed excess denoted $m(\psi )$ is a constant and we can apply the
Bestvina, Feighn argument (toral case) on the generalized equation
$\Omega$ with lengths given by $\delta^*$.

The argument goes as follows.  Entire transformation is moving bases
to the right and shortening them, and $m(\psi )$ is a constant.
During the process the initial point of every base is only moved
towards  the final point of $I$, and the length of a base is never
increased, therefore, every base has a limiting position. Since
$m(\psi )$ is a constant, there is a base $\lambda$ of length not
going to zero that participates infinitely often. If $\lambda$ is
eventually the only carrier, then we must have case 3 for the
process to go unboundedly long. Suppose $\lambda$ is carried
infinitely often. Whenever $\lambda$ is the carrier, the midpoint of
some base moves the distance between the midpoints of $\lambda$ and
its dual. Since every base has a limiting position, it follows that
$\lambda$ and its dual have the same limiting position.

The argument shows that after some finite number of steps we get an
overlapping initial section i.e. carrier and dual have high length,
but midpoints are close.

 It follows that for n sufficiently large doing the process with
$(\Omega,\delta_n)$ will give a similar picture.

 This implies case 3.

\end{proof}

Case 3 can only happen is there are solutions of an arbitrary large
exponent of periodicity. If we consider only minimal solutions, then
the exponent of periodicity can be effectively bounded, and entire
transformation always stops after a bounded number of steps.

On the group level, case 2 corresponds to the existence a QH vertex
group in the JSJ decomposition of $F_{R(\Omega )}$ and case 3
corresponds to the existence of an abelian vertex group in the
abelian JSJ decomposition of $F_{R(\Omega )}.$

\section{Elementary free groups}
If an NTQ group does not contain non-cyclic abelian subgroups we
call it {\em regular NTQ group}. We have shown in \cite{KMIrc} that
regular NTQ groups are hyperbolic. (Later Sela called these groups
{\em hyperbolic $\omega$-residually free towers} \cite{Sela1}.)
\begin{theorem}\cite{KM4},\cite{Sela6} Regular NTQ groups are
exactly the f.g. models of the elementary theory of a non-abelian
free group.\end{theorem}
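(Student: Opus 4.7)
The plan is to split the theorem into its two directions and derive each from the structural machinery already developed, together with two additional inputs: a quantifier elimination result for the elementary theory of $F$ and an implicit function theorem (formal lifting of solutions) for regular NTQ groups.

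For the ``only if'' direction, assume $G$ is finitely generated with $G \equiv F$. Since $G$ satisfies the same universal sentences as $F$, Proposition~3 identifies $G$ as a finitely generated fully residually free group, so Theorem~6 provides an embedding of $G$ into the coordinate group $H$ of an NTQ system. The $\forall\exists$-sentence ``any two commuting elements admit a common root'' holds in $F$ (centralizers in $F$ are cyclic), so by elementary equivalence it holds in $G$, forcing every abelian subgroup of $G$ to be cyclic. Given this, one runs the shortening/JSJ analysis of Section~4.1 inside the embedding to rebuild $G$ itself as an NTQ group with no abelian floors and with each quadratic floor of regular type; the higher-rank abelian floors of $H$ cannot persist in $G$ because they would contradict cyclicity of centralizers, and the quadratic floors through which $G$ factors can be chosen regular via the characterization of freely indecomposable fully residually free groups by their cyclic JSJ decompositions.

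For the ``if'' direction, fix a regular NTQ group $N$ and a first-order sentence $\varphi$; we must show $F \models \varphi \iff N \models \varphi$. Step one is a quantifier elimination theorem: every $\varphi$ is equivalent, over $F$, to a Boolean combination of $\forall\exists$-formulas of the ``generalized equation'' shape produced by the elimination process of Section~4. Step two is the implicit function theorem for regular NTQ groups: if $V(X,Y)=1 \wedge W(X,Y)\neq 1$ is a system over $F$, with $Y$ a tuple of generators of $N$, and if for every $F$-homomorphism $\psi : N \to F$ the system $V(X,\psi Y)=1 \wedge W(X,\psi Y)\neq 1$ has a solution in $F$, then there already exist words $P(Y)\in N$ with $V(P(Y),Y)=1$ and $W(P(Y),Y)\neq 1$ holding inside $N$. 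Combining the two steps, a $\forall\exists$-sentence holds in $F$ iff, after evaluating the outer universals via the generic tuple $Y$ of $N$, the inner existential is witnessed formally in $N$; this equivalence propagates through Boolean combinations and hence through all of $\varphi$.

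The main obstacle is the implicit function theorem for regular NTQ groups. Its proof runs the elimination process of Theorem~4 on the parametric family $V(X,Y)=1$ regarded as a system over $N$: at each leaf of the tree $T_{sol}$ one either extracts a formal solution $P(Y)$ from an NTQ structure extending the given tower, or one produces a proper residually free quotient of $N$, which under the regularity hypothesis contradicts full residual freeness via Theorem~9 and the shortening argument of Theorem~8. It is precisely the absence of non-cyclic abelian subgroups in $N$ that guarantees the elimination process terminates with a formal lift rather than branching into genuinely new relations; this is why the regularity hypothesis is essential, and also why the equivalence fails for NTQ groups with abelian floors. Once the implicit function theorem and the quantifier elimination are in place, both directions of the theorem follow.
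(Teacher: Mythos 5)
The paper itself does not prove this theorem; it states it and cites \cite{KM4} and \cite{Sela6}, so I can only compare your proposal to what the cited proofs actually do.

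Your outline of the ``if'' direction captures the right two ingredients: the quantifier elimination of first-order formulas over $F$ down to Boolean combinations of $\forall\exists$-formulas, and the implicit function theorem (parametrization theorem) for regular NTQ systems. Those are indeed the pillars of the proof in \cite{KM4} (and \cite{Imp}), and your intuition that the implicit function theorem fails over abelian floors --- so that regularity is what makes the lifting work --- is correct in spirit. But the step ``evaluating the outer universals via the generic tuple $Y$ of $N$'' conceals the hardest part: a universal quantifier in $N$ ranges over \emph{all} tuples of $N$, not just the generators, so one must show that every tuple from $N$ is itself generic with respect to some NTQ sub-system through which it factors, and then lift the existential. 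This requires the full machinery of \cite{Imp}, which is a very substantial separate paper, and cannot be reduced to ``running the elimination process of Theorem~4 on $V(X,Y)=1$ over $N$'' in the way you describe: the elimination process of Theorem~4 produces homomorphisms to $F$, not formal solutions inside $N$, and bridging that gap is precisely what the implicit function theorem is for.

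The ``only if'' direction has a genuine gap. From $G\equiv F$ you correctly obtain, via Proposition~3 and the sentence expressing cyclic centralizers, that $G$ is a hyperbolic (i.e.\ $\mathbb{Z}\times\mathbb{Z}$-free) limit group. But Theorem~6 only gives that $G$ \emph{embeds} into the coordinate group of an NTQ system, and you then assert that one can ``rebuild $G$ itself as an NTQ group with no abelian floors and with each quadratic floor of regular type.'' There is no such passage from ``embeds into NTQ'' to ``is NTQ'': a generic double $F\ast_{\langle w\rangle}F$ over a non-power word $w$, or more generally almost any hyperbolic limit group, embeds into an NTQ group but is not itself a regular NTQ group and is not elementarily equivalent to $F$. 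Your parenthetical justification --- that regularity follows from ``the characterization of freely indecomposable fully residually free groups by their cyclic JSJ decompositions'' --- is not correct: the JSJ decomposition determines the shape of the splittings, but regularity is the additional non-degeneracy requirement that the quadratic equations at each level admit non-commutative solutions in the coordinate group of the lower levels, and that does not follow from the JSJ data alone. The actual argument in \cite{KM4}, \cite{Sela6} instead uses $\forall\exists$-sentences in an essential way to force, level by level, that the group is built as a tower with punctured-surface (QH) pieces that are ``filled'' by $F$-points, and the abelian JSJ of $G$ enters as a tool inside a considerably more delicate induction, not as a shortcut to the conclusion. As written, your ``only if'' direction proves only that $G$ is a hyperbolic limit group, which is strictly weaker than the theorem.
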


\section{Stallings foldings and algorithmic problems}
A new technique to deal with $F^{\mathbb{Z}[t]}$ became available
when Myasnikov, Remeslennikov, and Serbin showed that elements of
this group can be viewed as reduced {\it infinite} words in the
generators of $F$.  It turned  out that many algorithmic problems
for  finitely generated fully residually free groups can be solved
by the same methods as in the standard free groups. Indeed, they
introduces an analog of the Stallings' folding  for an arbitrary
finitely generated subgroup of $F^{\mathbb{Z}[t]}$, which allows one
to solve effectively the membership  problem in $F^{\mathbb{Z}[t]}$,
as well as in an arbitrary finitely generated subgroup of it.

\begin{theorem} (Myasnikov-Remeslennikov-Serbin)\cite{MRS2} Let $G$ be a f.g.
fully residually free group and $G\hookrightarrow G^\ast$ the
effective Nielsen completion. For any f.g. subgroup $H \leq G$ one
can effectively construct a finite graph $\Gamma_H$ that in the
group $G^\ast$ accepts precisely the normal forms of elements from
$H$.
\end{theorem}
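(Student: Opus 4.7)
The plan is to mimic the classical Stallings folding construction but in the setting of the Nielsen completion $G^\ast$, where group elements are reduced (possibly infinite) words over the generators of $F$. By the Embedding Theorem and the description of $F^{\mathbb{Z}[t]}$ as an iterated extension of centralizers, every f.g. fully residually free group $G$ embeds effectively into some $G_n$, whose elements we may manipulate as reduced infinite words. Thus the statement reduces to building, for a given finite generating set $H = \langle h_1,\dots,h_k\rangle$, a finite labeled graph $\Gamma_H$ whose closed paths at the basepoint read exactly the normal forms of the elements of $H$ inside $G^\ast$.

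First I would form the initial ``flower'' $\Gamma_0$: a wedge of $k$ oriented loops at a basepoint $v_0$, with the $i$-th loop subdivided and labeled by the infinite-word normal form of $h_i$ in $G^\ast$. Then I would iterate two operations: (i) a \emph{folding} that identifies two edges of $\Gamma$ incident to a common vertex and carrying a common initial segment of their labels (here ``common initial segment'' must be interpreted via the infinite-word comparison in $G^\ast$); and (ii) a \emph{completion} step that handles the HNN-extension structure underlying the extensions of centralizers, namely, whenever a path reads a non-trivial power of some $u_i$ whose centralizer was extended in passing from $G_{i-1}$ to $G_i$, we attach the appropriate abelian gadget so that the graph recognizes the entire cyclic subgroup $C_{G_i}(u_i)$ this power belongs to. Both operations are local and leave the subgroup $H$ they ``accept'' unchanged.

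The key effective ingredients are: the decidability of the word problem in $G^\ast$ (so we can compare finite prefixes of infinite words), the fact that the normal form of any element is algorithmically computable in the iterated centralizer extensions, and the equational Noetherian property, which I would use to bound the number of foldings and completion steps needed before the graph stabilizes. More concretely, every folding strictly decreases a complexity measure (total number of edges plus the combined ``word-complexity'' of edge labels in the $G_i$-stratification), and the completion steps introduce only finitely many new edges controlled by the finitely many centralizers met by generators of $H$. This yields termination and gives a finite graph $\Gamma_H$.

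The main obstacle is the handling of the infinite-word labels during folding: one needs to decide, effectively and in finite time, how far two such labels agree and, when they diverge, to rewrite the graph so that the resulting labels are again honest normal forms in $G^\ast$ (in particular, to absorb any abelian ``pieces'' produced by cancellation into the correct centralizer stratum). Once this is done, correctness follows by the standard Stallings argument: by induction on the folding sequence, $\Gamma_H$ recognizes exactly the cyclically reduced readings of elements of $H$, and the completion steps guarantee that these readings are precisely the $G^\ast$-normal forms. This gives the desired $\Gamma_H$, from which effective solutions of the membership problem in $H$ and in $G^\ast$ follow as in the classical free-group setting. \hfill $\Box$
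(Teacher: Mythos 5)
The paper does not itself prove this theorem; it is stated with a citation to Myasnikov--Remeslennikov--Serbin \cite{MRS2}, and the only guidance the text offers is that the construction is ``an analog of the Stallings' folding'' built on the representation of elements of $F^{\mathbb{Z}[t]}$ as reduced infinite words. Your proposal follows that general template, so the architecture is right, but two of your key steps do not hold up as written.

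First, the termination argument is wrong in kind. You appeal to the equationally Noetherian property to bound the number of foldings; that property controls descending chains of algebraic sets (or, dually, ascending chains of radicals), and has no bearing on whether a sequence of graph foldings stabilizes. Even in the classical free-group case, Stallings folding terminates for a purely combinatorial reason (each fold strictly decreases the number of edges). In the infinite-word setting the issue is sharper: edge labels live in a $\mathbb{Z}[t]$-stratification, and ``word-complexity'' of an infinite word is not a well-ordered quantity unless you pin down exactly what is being counted. One needs a concrete, finitely-described invariant of the labeled graph (number of edges, together with a bounded set of exponent parameters) that strictly decreases, not a global finiteness principle imported from algebraic geometry. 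Second, the step you yourself flag as ``the main obstacle'' --- deciding effectively whether two infinite labels share a common prefix, and where they diverge --- is exactly the content of the MRS machinery and cannot be left as a remark. Infinite words of the form $u^{t}$ have finite presentations, and comparisons must be reduced to arithmetic on the exponents; the word problem in $G^\ast$ lets you compare finite prefixes, but it does not by itself tell you after finitely many checks that two genuinely infinite labels coincide forever. Without a concrete comparison procedure for the normal forms, neither folding nor the centralizer ``completion'' step is effective, and the whole construction collapses to a heuristic. So the proposal identifies the right strategy but leaves precisely the two points where the real work lies --- effective label comparison and a correct termination measure --- unproved.
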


\begin{theorem} \cite{KMRS}  The following algorithmic problems are
decidable in a f.g. fully residually free group G:
  \begin{itemize}
\item the membership problem,
\item the intersection problem (the intersection
   of two f.g. subgroups in $G$ is f.g. and one can find a finite generated set
   effectively),
  \item conjugacy of f.g. subgroups,
   \item malnormality of subgroups,
   \item finding the centralizers of finite subsets.
\end{itemize}\end{theorem}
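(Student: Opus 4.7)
The plan is to reduce each of the five algorithmic questions to operations on the finite Stallings-type subgroup graph $\Gamma_H$ whose existence and effective constructibility is granted by Theorem 11. By the Embedding Theorem we may, from any finite presentation of $G$, effectively embed $G$ into the Nielsen completion $G^{\ast}$ sitting inside $F^{\mathbb{Z}[t]}$, so that every element of $G$ has an effectively computable normal form as a reduced (possibly infinite) word over the generators of $F$. Thus for any f.g.\ subgroup $H\leq G$ specified by a finite tuple of generators we may build $\Gamma_H$ and treat $\Gamma_H$ as a finite automaton recognizing the normal forms of elements of $H$.

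For the \emph{membership problem}, given $g\in G$ compute its normal form in $G^{\ast}$ and try to trace it as a closed path based at the distinguished vertex of $\Gamma_H$; by Theorem 11, $g\in H$ iff such a path exists. For the \emph{intersection problem}, imitate the classical Stallings fiber product: form the pullback $\Gamma_{H_1}\times_{G^{\ast}}\Gamma_{H_2}$ at the paired basepoints, take the connected component of the basepoint and fold. The loops at the basepoint provide a finite generating set for $H_1\cap H_2$, giving both the finite generation statement and an algorithm producing it. The key point to verify here is that the fiber product construction remains finite and that folding in the presence of infinite-word labels terminates effectively; this is where the Myasnikov--Remeslennikov--Serbin machinery of reduced infinite words is essential, since folding must be carried out modulo cancellations in $F^{\mathbb{Z}[t]}$ rather than only in $F$.

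For \emph{conjugacy of f.g.\ subgroups}, cyclically reduce the graphs $\Gamma_{H_1}$ and $\Gamma_{H_2}$ and search through the finitely many candidate conjugating elements read off their vertex structure, deciding each candidate by the membership algorithm above. For \emph{malnormality of $H$}, test whether for every non-trivial element $g$ realized as a difference of paths in $\Gamma_H$ the fiber product of $\Gamma_H$ with its $g$-translate has trivial loop group at the basepoint; finiteness of $\Gamma_H$ reduces this to checking finitely many cases. For \emph{centralizers of a finite subset} $S$, use the fact (property 4 of Proposition 1, together with the description of $G$ as a subgroup of an iterated centralizer extension $G_n$) that the centralizer of any non-trivial element is a f.g.\ abelian subgroup whose maximal abelian overgroup is algorithmically findable from $\Gamma_{\langle s\rangle}$; then $C_G(S)=\bigcap_{s\in S}C_G(s)$ is computed by iterated application of the intersection algorithm.

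The main obstacle throughout is the intersection case: establishing that f.g.\ subgroups of $G$ have f.g.\ intersection, and that the intersection can be produced effectively, is the substantive Howson-type result beyond Theorem 11. All other items rely on this via the fiber product, so the technical core of the proof is verifying that the infinite-word folding procedure on $\Gamma_{H_1}\times_{G^{\ast}}\Gamma_{H_2}$ terminates with a finite folded graph; once this is in hand, membership, conjugacy, malnormality, and centralizer computations follow by routine reductions analogous to the classical free-group case.
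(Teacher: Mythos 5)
Your high-level plan is the right one and matches the approach in \cite{KMRS}: build on Theorem 11, view $\Gamma_H$ as an automaton over normal forms in the Nielsen completion $G^{\ast}\leq F^{\mathbb{Z}[t]}$, and run Stallings-style constructions (fiber products, folding) for the various algorithmic problems. You also correctly isolate the genuine technical obstacle -- showing that the fiber product $\Gamma_{H_1}\times_{G^{\ast}}\Gamma_{H_2}$ stays finite and that folding terminates effectively when labels are reduced infinite words, so that cancellation happens in $F^{\mathbb{Z}[t]}$ rather than in $F$. That is the Howson-type core of the intersection statement and of everything downstream from it.

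Where the proposal is thinner than the actual argument is in the conjugacy and malnormality steps. The paper's route (sketched in the surrounding discussion) is to first prove the key intermediate result that for f.g.\ $H,K\leq G$ there are only \emph{finitely many conjugacy classes} of intersections $H^g\cap K$, together with an algorithm producing representatives; conjugacy of $H_1,H_2$ and malnormality of $H$ are then read off from this finite list. Your sketch instead says to ``cyclically reduce the graphs'' and ``search through finitely many candidate conjugating elements read off the vertex structure.'' In the classical free-group setting that is essentially correct because conjugate subgroups have isomorphic cyclically reduced cores and conjugators can be length-bounded by the graphs; but with infinite-word labels the relevant length function takes values in $\mathbb{Z}^n$, and it is not clear a priori how to bound the candidate conjugators or the translates $H^g$ you must test. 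That bounding is exactly what the finiteness-of-conjugacy-classes-of-intersections lemma supplies. So to make your conjugacy and malnormality steps rigorous you would need to first prove (or invoke) that lemma -- at which point your fiber-product machinery is the right tool to establish it, but it should be stated as the key lemma rather than left implicit. The membership, intersection, and centralizer items of your proposal are otherwise in line with the paper's method.
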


It was proved by Chadas and Zalesski \cite{CZ} that finitely
generated fully residually free groups are conjugacy separable.

 Notice that the decidability of conjugacy problem also follows from the
results of Dahmani and Bumagin. Indeed, Dahmani showed  that $G$ is
relatively hyperbolic and Bumagin proved  that the conjugacy problem
is decidable in relatively hyperbolic groups. We prove that for
finitely generated subgroups $H, K$ of $G$ there are only finitely
many conjugacy classes of intersections $H^g \cap K$ in $G$.
Moreover, one can find a finite set of representatives of these
classes effectively. This implies that one can effectively decide
whether two finitely generated subgroups of $G$ are conjugate or
not, and check if a given finitely generated subgroup is malnormal
in $G$. Observe, that the malnormality problem is decidable in free
groups, but is undecidable in torsion-free hyperbolic groups  -
Bridson and Wise constructed corresponding examples. We provide an
algorithm to find the centralizers of finite sets of elements in
finitely generated fully residually free groups and compute their
ranks. In particular, we prove that for a given finitely generated
fully residually free group $G$ the {\it centralizer spectrum}
$Spec(G) = \{rank(C) \mid C=C_G(g), g \in G\}$, where $rank(C)$ is
the rank of a free abelian group $C$, is finite and one can find it
effectively.

\begin{theorem}\cite{BKM} The isomorphism problem is decidable
in f.g. fully residually free groups.\end{theorem}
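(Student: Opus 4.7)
The plan is to compute canonical graph-of-groups decompositions of the two input groups and test them for isomorphism modulo the action of the modular group. The main ingredients, all established earlier in the excerpt, are: every f.g.\ fully residually free group is finitely presented (Corollary~2); every freely indecomposable non-abelian such group admits an effectively computable cyclic splitting (Corollary~1); and the effective embedding into $F^{\mathbb{Z}[t]}$ of Theorem~3 together with the Stallings-type foldings of Theorem~10 makes subgroup computations tractable.

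First, I would reduce to the freely indecomposable case. Iterating Corollary~1 and peeling off free factors produces a Grushko decomposition $G \cong G_1 * \cdots * G_k$ in which each $G_i$ is freely indecomposable; by Grushko's theorem, the unordered multiset of isomorphism types of the $G_i$ is a complete invariant, so the problem reduces to comparing freely indecomposable factors (the infinite cyclic factors are handled by counting). Second, for each freely indecomposable non-abelian factor, compute its cyclic JSJ decomposition, whose existence and constructibility follow from the structural results of Section~4. Vertex groups come in three flavors: abelian (invariant: rank), quadratically hanging surface-type (invariants: genus, orientability, and number of boundary components), and rigid. Two freely indecomposable groups are isomorphic if and only if their JSJ decompositions agree as labeled graphs of groups, up to the action of the modular group generated by Dehn twists along edges, mapping-class automorphisms at QH vertices, and abelian-vertex automorphisms fixing incident edge groups. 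Third, test this equivalence: enumerate the (finitely many) isomorphisms of the underlying finite graph and, for each, test vertex-group isomorphism together with compatibility of edge inclusions.

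The main obstacle is the isomorphism problem at rigid vertex groups endowed with their peripheral cyclic subgroups, together with the verification of modular equivalence. For rigid vertices the absence of any essential cyclic splitting constrains the outer automorphism group to finite ambiguity, so candidate isomorphisms can be enumerated using the effective subgroup structure coming from Theorem~10 and checked via the decidable membership problem of Theorem~11. Modular orbit equivalence at abelian vertices reduces to a system of linear equations over $\mathbb{Z}$ on the Dehn-twist coefficients, while at QH vertices it reduces to the (decidable) word and conjugacy problems in surface mapping class groups acting on the marked peripheral conjugacy classes. Combining these pieces with the conjugacy-of-subgroups and malnormality algorithms of Theorem~11 yields the desired decision procedure.
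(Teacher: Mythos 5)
The paper does not prove this theorem; it only cites \cite{BKM}. That said, your outline does track the JSJ-based strategy used there, and the paper itself flags the two ingredients --- an algorithm for solving equations in fully residually free groups and effective construction of the \emph{abelian} JSJ decomposition --- immediately after the theorem. As a proof, though, your sketch has real gaps. You should be using the abelian, not cyclic, JSJ, since fully residually free groups can contain $\mathbb{Z}^n$ subgroups with $n>1$ and the paper says so explicitly. Corollary~1 does not produce a Grushko decomposition: it gives a cyclic splitting of a group \emph{already known} to be freely indecomposable and non-abelian, so deciding free indecomposability and extracting the Grushko factors requires a separate effective argument, not an iteration of Corollary~1.

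More seriously, a JSJ decomposition is not canonical as a graph of groups: it is well-defined only up to slide moves, conjugation, and folding/unfolding, so your claim that two freely indecomposable limit groups are isomorphic iff their JSJ graphs of groups agree up to the modular group is false as stated. You must either quotient by the full set of elementary deformations of graphs of groups or exhibit a genuinely canonical JSJ tree (for instance via a tree-of-cylinders construction) and prove it computable; this is exactly where the effective JSJ machinery of \cite{JSJ} enters and cannot be taken for granted. Finally, you essentially assume away the rigid-vertex case. The finiteness of the relative outer automorphism group of a rigid vertex is the content of the shortening argument (Theorem~8 of the paper), but to make the search \emph{effective} you need a computable bound on the word lengths of images of generators under any peripheral-structure-preserving isomorphism, which requires the full effective Makanin--Razborov machinery over limit groups rather than just the membership and conjugacy algorithms you invoke. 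You also never say how the recursion terminates: rigid vertex groups admit no further JSJ, so one must induct on a complexity function (e.g.\ depth in the NTQ hierarchy) and show each step strictly decreases it.
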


We also have an algorithm to solve equations in fully residually
free groups and to construct the abelian JSJ decomposition for them.

Recently Dahmani and Groves \cite{DG} proved
\begin{theorem}The isomorphism problem is decidable
in relatively hyperbolic groups with abelian
parabolics.\end{theorem}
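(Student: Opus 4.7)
The plan is to follow the general template used by Sela and Dahmani--Guirardel for the hyperbolic case, adapted to the relatively hyperbolic setting, using canonical JSJ decompositions relative to the parabolic structure. First I would reduce to the one-ended case: given a finite presentation of a relatively hyperbolic group $G$ with abelian parabolics, one can algorithmically compute a Grushko decomposition of $G$ relative to the maximal parabolic subgroups (the existence of an accessible relative Grushko decomposition follows from Bowditch's work, and constructivity follows from decidability of the word and conjugacy problem in this class, together with effective enumeration of splittings over the trivial group relative to the parabolics). Two groups are isomorphic iff their Grushko factors can be matched up, so it suffices to treat the case where $G$ is one-ended relative to its parabolic structure.

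Next, for each one-ended factor I would compute the canonical (Bowditch-type) JSJ decomposition over virtually abelian edge groups relative to the parabolic structure. This decomposition is canonical up to isomorphism, so two one-ended groups in our class are isomorphic if and only if their canonical JSJ graphs of groups are isomorphic as decorated graphs, up to the ambiguities introduced by Dehn twists along edge groups and by automorphisms of abelian and surface (QH) vertex groups. The vertex groups come in three flavors: maximal abelian vertex groups (classified up to isomorphism by their rank, and equipped with the incident edge subgroups, which is a finite piece of linear-algebraic data), maximal QH surface vertex groups (classified up to isomorphism by the homeomorphism type of the underlying orbifold and the peripheral structure, both computable), and rigid vertex groups. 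For the first two types, matching up vertices reduces to a finite combinatorial problem once the JSJ is computed.

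For the rigid vertex groups the situation is more subtle, but the key structural fact is that a rigid vertex group $R$ in the JSJ of $G$ inherits a relatively hyperbolic structure with abelian parabolics (the induced peripheral subgroups being the incident edge groups together with the maximal parabolics of $G$ contained in $R$), and its outer automorphism group relative to this peripheral structure is finite; moreover, one can effectively find generating sets for this finite group of relative outer automorphisms. This gives a finite list of candidate identifications of peripheral data between two rigid vertices in putative corresponding positions, and for each candidate one needs to decide whether $R_1$ and $R_2$ are isomorphic via a map respecting the peripheral structure. I would handle this recursively: introduce a complexity (e.g.\ a lexicographic combination of the rank of the peripheral structure and the Scott--Swarup--Bowditch complexity of the JSJ) that strictly decreases when passing from $G$ to a rigid vertex group with its induced peripheral structure, and argue that the base case (no further splittings relative to the parabolics, i.e.\ relatively rigid groups) can be solved directly using the finiteness of the relative outer automorphism group and an enumeration argument against candidate isomorphisms.

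The main obstacle will be the inductive step on rigid vertex groups, and specifically making the complexity strictly decrease while still preserving the class (relatively hyperbolic with abelian parabolics): the induced peripheral structure on a rigid vertex may be larger than the original one, so naive measures fail, and one must argue that either the number of Bass--Serre edges in the relative JSJ strictly drops, or else the rigid vertex is already relatively rigid and we are in the base case. A secondary difficulty is the effective computation of the canonical JSJ itself in the relatively hyperbolic setting, which requires effective versions of Bowditch's splitting-detection machinery; this is where the assumption that parabolics are abelian is crucial, both for effectiveness (abelian groups have a decidable theory and well-understood subgroup structure) and for the finiteness of the relevant automorphism groups that drives the recursion.
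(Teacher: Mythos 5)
The paper does not contain a proof of this theorem: it is stated as a cited result of Dahmani and Groves \cite{DG}, so there is no in-paper argument to compare against. Evaluating your sketch on its own terms, the overall shape (relative Grushko reduction, canonical relative JSJ over abelian edge groups, classification of QH/abelian/rigid vertex types, recursion into rigid vertices) does track the Dahmani--Groves strategy and its hyperbolic-group antecedents. But the two places you flag as ``the main obstacle'' and ``a secondary difficulty'' are in fact the substance of the theorem, and you leave both unfilled in a way that a referee could not accept as a proof.

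The more serious gap is in the base case. Your plan there is ``finiteness of the relative outer automorphism group and an enumeration argument against candidate isomorphisms,'' but enumeration alone cannot terminate: you can recursively enumerate homomorphisms $R_1\to R_2$ and you can verify that a given homomorphism is onto, but you need a certificate of \emph{non}-isomorphism or at least an a priori bound cutting off the search. In Dahmani--Groves this is supplied by solving systems of equations and inequations with constants in the relatively hyperbolic group (the paper itself cites Dahmani's decidability of the existential theory of torsion-free relatively hyperbolic groups with abelian parabolics immediately after this theorem) together with an effective ``shortening argument'' controlling non-injective or non-surjective sequences of homomorphisms; that machinery is not merely helpful but is what makes the enumeration terminate. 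Without it the base case is an algorithm that either outputs ``isomorphic'' or runs forever. A related gap is the effective computation of the canonical relative JSJ: you assert it follows from ``effective versions of Bowditch's machinery,'' but constructing the JSJ effectively is itself one of the hard technical results, and it again rests on solving equations (to detect splittings) rather than on word/conjugacy decidability alone. Finally, the complexity-decrease claim in your recursion needs more than the alternative you state (``either the edge count drops or we are in the base case''); the induced peripheral structure on a rigid vertex genuinely grows, and one must show the relevant pair (relative one-endedness, rigidity) is preserved while a well-founded complexity drops, which requires a more careful bookkeeping of elementary splittings than the Scott--Swarup style complexity you invoke.
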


Dahmani \cite{Dah} proved the decidability of the existential theory
of a torsion free relatively hyperbolic group with vitually abelian
parabolic subgroups. This implies our result in \cite{JSJ} about the
decidability of the existential theory of f.g. fully residually free
groups.

\section{Residually free groups}

Any f.g. residually free group can be effectively embedded into a
direct product of a finite number of fully residually free groups
\cite{KMIrc}.

Important steps towards the understanding of the structure of
finitely presented residually free groups were recently made in
\cite{BHMS2007, BHMS2008}.

There exists finitely generated subgroups of $F\times F$ (this group
is residually free but not fully residually free) with unsolvable
conjugacy and word problem (Miller).

In finitely presented residually free groups these problems are
solvable \cite{BHMS2007}.

\begin{theorem}\cite{BHMS2008} Let $G<\Gamma _0\times\ldots\times\Gamma _n$ be the subdirect product of limit groups. Then $G$ is finitely presented
iff it satisfies the virtual surjection to pairs (VSP) property:
$$\forall \ \  0\leq i<j\leq n\ \ |\Gamma _i\times\Gamma _j:P_{ij}(G)|<\infty
.$$\end{theorem}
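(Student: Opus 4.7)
The plan is to prove the two implications separately, drawing on two pillars from the Bridson--Howie--Miller--Short circle of ideas: an asymmetric variant of the Baumslag--Bridson--Miller--Short 1-2-3 theorem for fibre products, and the structure theory of subdirect products of limit groups (which in turn rests on the algebraic geometry / elimination machinery developed earlier in this paper). Throughout, $P_{ij}$ denotes the projection $\Gamma_0\times\ldots\times\Gamma_n\to\Gamma_i\times\Gamma_j$.

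For the \emph{sufficiency} direction (VSP $\Rightarrow$ $G$ finitely presented), the strategy is to realise $G$, after passing to a subgroup of finite index, as an iterated fibre product over common quotients. By VSP each $P_{ij}(G)$ has finite index in $\Gamma_i\times\Gamma_j$, and since every $\Gamma_i$ is a limit group it is finitely presented (Corollary 2), so each $P_{ij}(G)$ is finitely presented as well. Using that limit groups are of type $F_\infty$, one can invoke the asymmetric 1-2-3 theorem inductively on the pairs $(i,j)$ to assemble these pairwise data into a finite presentation of a finite-index subgroup of $G$. Since finite presentability is a commensurability invariant, $G$ itself is then finitely presented.

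For the \emph{necessity} direction (finite presentation $\Rightarrow$ VSP), I would argue by contradiction. Suppose some $P_{ij}(G)$ has infinite index in $\Gamma_i\times\Gamma_j$. The Bridson--Howie classification of subdirect products of limit groups then forces $P_{ij}(G)$ to lie virtually inside a proper ``diagonal'' fibre product, that is, the preimage of the graph of an isomorphism between a pair of non-trivial common quotients of $\Gamma_i$ and $\Gamma_j$. A standard $H_2$-computation --- essentially the homological core of the 1-2-3 theorem, applied to this diagonal fibre product --- shows that such a subgroup has infinitely generated second integral homology. This obstruction pulls back to $G$, and through Hopf's formula contradicts finite presentability.

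The \emph{main obstacle} is the necessity direction: pinpointing exactly which subdirect products can arise in a product of limit groups requires the full strength of the structure theory for finitely generated subgroups of $F^{\mathbb{Z}[t]}$ and their products, which ultimately relies on the Decomposition Theorem and the Embedding Theorem (Theorem 3) recalled above. Once this structural dichotomy is in place, the homological obstruction is the same calculation that drives the original 1-2-3 theorem, and the sufficiency direction becomes a fairly direct packaging of the asymmetric 1-2-3 theorem with the strong finiteness properties of limit groups.
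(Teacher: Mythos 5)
This theorem is simply \emph{quoted} in the paper from \cite{BHMS2008}; the survey closes its short section on residually free groups with the statement and supplies no proof of its own. There is therefore no internal argument against which to measure yours, and the remarks below assess your sketch as a blueprint for the argument in the cited paper.

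Your sufficiency direction is sound in outline and does track the actual Bridson--Howie--Miller--Short strategy: pass to a finite-index subgroup realised as an iterated fibre product, apply the (asymmetric) 1-2-3 theorem at each stage, and feed in the fact that limit groups are of type $FP_\infty$; since finite presentability is a commensurability invariant, this gives finite presentability of $G$ itself.

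The necessity direction has a genuine gap as written. You propose to locate an infinitely generated $H_2$ inside $P_{ij}(G)$ and then ``pull back'' the obstruction to $G$. But finite presentability, and more generally the property $FP_2$, is \emph{not} inherited by quotients: any two-generator infinitely presented group is a quotient of the free group of rank two, which is certainly finitely presented. More precisely, the five-term exact sequence for $1\to K_{ij}\to G\to P_{ij}(G)\to 1$ (with $K_{ij}=G\cap\prod_{k\neq i,j}\Gamma_k$) sandwiches $H_2(P_{ij}(G))$ between $H_2(G)$ and $K_{ij}/[G,K_{ij}]$, and the latter has no reason to be finitely generated. So the badness of $P_{ij}(G)$ does not by itself preclude $G$ from being finitely presented, and the step ``this obstruction pulls back to $G$'' is exactly what needs an argument. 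In the cited paper the homological computation is carried out directly on (a finite-index subgroup of) $G$, via a Stallings--Bieri/LHS spectral sequence analysis built around the normal subgroups $N_i=G\cap\Gamma_i$ and the \emph{infinite} common quotient $\Gamma_i/N_i\cong\Gamma_j/N_j$ --- infinite, not merely non-trivial, is the hypothesis that powers the calculation. Your Goursat-type description of $P_{ij}(G)$ as the preimage of the graph of an isomorphism between quotients is correct, but it is the starting input to that direct computation in $G$, not a licence to compute in the image and transport the answer back.
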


\bibliographystyle{amsalpha}

\begin{thebibliography}{A}
\bibitem{Appel}
K.I.Appel.
\newblock One-variable equations in free groups.
\newblock {\it Proc. Amer. Math. Soc.}, 19:912--918, 1968.

\bibitem{Bau62} G. Baumslag, {\em On generalized free products},
Math. Z., 78:423-438, 1962.
\bibitem{Bau67} B. Baumslag, {\em Residually free groups}, Proc. London.
Math. Soc. (3), 17:402-418, 1967.
\bibitem{BS69} J.L. Bell, A.B. Slomson, {\it Models and
ultraproducts: an introduction}, North-Holland, Amsterdam, 1969.
\bibitem{BMR1} G.Baumslag, A.Myasnikov, V.Remeslennikov.
\newblock Algebraic
geometry over groups I. Algebraic sets and ideal theory.
\newblock
{\it Journal of Algebra,} 1999, v.219, 16--79.
\bibitem{BMR}
G. Baumslag, A. Myasnikov and V. Remeslennikov, {\it Malnormality is
decidable in  free groups.} Internat. J. Algebra Comput. \textbf{9}
no.~6 (1999), 687--692.
\bibitem{BFComb}  Bestvina, M.; Feighn, M. A combination theorem for negatively curved groups. J. Differential Geom. 35 (1992), no. 1, 85--101.
{\em Addendum and correction to: ``A combination theorem for
negatively curved groups'' J. Differential Geom. 35 (1992), no. 1,
85--101}, J. Differential Geom. 43 (1996), no. 4, 783--788.

\bibitem{BHMS2007} M. Bridson, J. Howie, C. Miller, H. Short, {\em
Subgroups of direct products of limit groups}, arXiv:0704.3935v2,
6Nov 2007, \newblock Annals of
Math., in press.
\bibitem{BHMS2008} M. Bridson, J. Howie, C. Miller, H. Short, {\em
Finitely presented residually free groups}, arXiv:0809.3704v1, 22Sep
2008.




\bibitem{Br} Bryant R,
\newblock  {\it The verbal topology of a group,}
\newblock  Journal of Algebra, 48:340--346, 1977.



\bibitem{Bum}
I. Bumagin, {\it The conjugacy problem for relatively hyperbolic
groups.}  Algebr. Geom. Topol. 4 (2004), 1013--1040 .

\bibitem{BKM}
I. Bumagin, O. Kharlampovich, A. Myasnikov. {\it Isomorphism problem
for finitely generated fully residually free groups.}, J. Pure and
Applied Algebra, Volume 208, Issue 3, March 2007, Pages 961-977.


\bibitem{CK}
C.C. Chang, H.J. Keisler,
\newblock {\it Model Theory}.
\newblock North-Holland, London, N.Y, 1973.
\bibitem{CZ}  S.C. Chagas, P. Zalesskii, \newblock {\em Limit Groups are Conjugacy
Separable.} \newblock IJAC 17(4): 851-857 (2007)
\bibitem{CG} C. Champetier, V. Guirardel, {\it Limit groups as
limits of free groups: compactifying the set of free groups}, Israel
Journal of Mathematics 146 (2005), 1-75.

\bibitem{Cohen}
D. E. Cohen,
\newblock {\em Combinatorial group theory: a topological approach.}
\newblock Cambridge Univ. Press, 1989.
\bibitem{ComEd}
L.~P. {Comerford Jr.} and C.~C. Edmunds.
\newblock {\it Solutions of equations in free groups}.
\newblock Walter de Gruyter, Berlin, New York, 1989.
\bibitem{DG} F. Dahmani, D. Groves. \newblock {\em The isomorphism problem for toral relatively hyperbolic
groups.} \newblock  Publ. Math. Inst. Hautes Études Sci. No. 107
(2008), 211--290.
\bibitem{Dah} F. Dahmani, \newblock {\em Existential questions in (relatively) hyperbolic groups.}
\newblock to appear in Israel J. Math.
\bibitem{Dah} F. Dahmani, {\it Combination of convergence groups.} Geom. Topol.
\textbf{7} (2003), 933--963.
\bibitem{EP}
Yu.~L. {Ershov}, E.~A. Palutin,
\newblock {\it Mathematical Logic}.
\newblock Walter de Gruyter, Berlin, New York, 1989.
\bibitem{FGMRS}
B. Fine, A.M. Gaglione, A. Myasnikov, G. Rosenberger, and D.
Spellman.
\newblock {\em A classification of fully residually free groups of rank
three or less.}
\newblock  {\it Journal of Algebra} 200 (1998), no. 2, 571-605. MR 99b:20053
\bibitem{Gui} V. Guirardel, {\em Limit groups and groups acting freely on $\Bbb R\sp n$-trees}, Geom. Topol. 8 (2004),
1427--1470.
\bibitem  {Gub}
V. Guba,
\newblock  {\em Equivalence of infinite systems of equations in free groups
and semigroups to  finite subsystems.}
\newblock {\it  Mat. Zametki}, 40:321--324, 1986.

\bibitem{GKM}
D. Gildenhuys, O.Kharlampovich, and A.Myasnikov, {\it CSA groups
and separated  free constructions.} Bull. Austr. Math. Soc., 1995,
52, 1, pp.63--84.
\bibitem{Gr}
R.I. Grigorchuk and P.F. Kurchanov.
\newblock {\em On quadratic equations in free groups.}
\newblock { Contemp. Math.}, 131(1):159--171, 1992.
\bibitem{GW} D. Groves, H. Wilton. \newblock {\em Enumeratng limit
groups}, arXiv:0704.0989v2.
\bibitem{LS}
R.C. Lyndon and P.E. Schupp.
\newblock {\it Combinatorial group theory}.
\newblock Springer, 1977.
\bibitem{Lyndon2}
R.~C. Lyndon.
\newblock {\em Groups with parametric exponents.}
\newblock { Trans. Amer. Math. Soc.}, 96:518--533, 1960.

 \bibitem{Lyndon1} R. C. Lyndon.  {\em Equations in free groups.} {Trans. Amer. Math.
  Soc.} \textbf{96} (1960), 445--457.

\bibitem{Lyndon3}
R.C. Lyndon.
\newblock {\em Equations in groups.}
\newblock {Bol. Soc. Bras. Mat.}, 11:79--102, 1980.
\bibitem{L} R. C. Lyndon,
 {\em Groups with parametric exponents},
 Trans. Amer. Math. Soc.,
 {96},
 {518-533},
 (1960).


\bibitem{Malcev} A.I.Malcev, {\it On equation $zxyx^{-1}y^{-1}z^{-1}  = aba^{-1}b^{-1}$ in a free group}, Algebra and Logic, 1 (1962), 45-50.
\bibitem{MR1} A. Myasnikov, V. Remeslennikov,
 {\em Degree groups,
 Foundations of the theory and tensor completions},
{Sibirsk. Mat. Zh.}, {35},
  (1994), 5, 1106-1118.
    \bibitem{MR2}
A.\,Myasnikov, V.\,Remeslennikov, {\it Algebraic geometry over
groups II: Logical foundations}, J.~Algebra, {\bf 234} (2000),
pp.\,225--276.
\bibitem{KMNull}
O. Kharlampovich and A. Myasnikov.
\newblock {\em Irreducible affine varieties over a free group. 1:
irreducibility of
  quadratic equations and Nullstellensatz.}
\newblock {J. of Algebra}, 200:472--516, 1998. MR 2000b:20032a

\bibitem{KMIrc}
O. Kharlampovich and A. Myasnikov,
\newblock {\em Irreducible affine varieties over a
free group. II: Systems in triangular quasi-quadratic form  and
description of residually free groups.}
\newblock {J. of Algebra,}
v. 200, no. 2,  517--570,
 1998. MR 2000b:20032b
 \bibitem{KM9}
O. Kharlampovich and A. Myasnikov.
\newblock {\em Description of Fully Residually Free Groups and Irreducible Affine
Varieties Over a Free Group.}\newblock Banff Summer School 1996,
\newblock  Center de Recherchers Matematiques, CRM Proceedings and
Lecture Notes, v. 17, 1999, p.71-80. MR 99j:20032
\bibitem{KMComb} O. Kharlampovich, A. Myasnikov, {\em Hyperbolic groups and free constructions}, Trans. Amer. Math. Soc. 350 (1998), no. 2,
571--613.
\bibitem{Imp}
O. Kharlampovich, A. Myasnikov, \newblock {\it Implicit function
theorems over free groups.} \newblock J. Algebra, 290 (2005) 1-203.
\bibitem{JSJ}
O. Kharlampovich, A. Myasnikov,
\newblock \textit{Effective JSJ decompositions,}\newblock  Group Theory: Algorithms, Languages, Logic, Contemp. Math., AMS,
2004, 87-212 (Math GR/0407089).


\bibitem{KMRS}
O. Kharlampovich, A. Myasnikov, V. Remeslennikov, D. Serbin.
\textit{ Subgroups of fully residually free groups:} algorithmic
problems,Group theory, Statistics and Cryptography,   Contemp.
Math., Amer. Math. Soc., 360, 2004, 61-103.
\bibitem{KM4}
Kharlampovich O., Myasnikov A., {\it Elementary theory of free
non-abelian groups,} J.Algebra, 302, Issue 2, 451-552, 2006.

\bibitem{KMSa} O. Kharlampovich, A. Myasnikov, D. Serbin, {\it Groups with free regular length function on ${\mathbb Z}^n.$}
\bibitem{KMSb} O. Kharlampovich, A. Myasnikov, D. Serbin {\it ${\mathbb Z}^n$-free groups.}


\bibitem{Lut} E. Lioutikova, {\em Lyndon's group is conjugately residually free.} Internat. J. Algebra Comput. 13 (2003), no. 3, 255--275.
\bibitem{Mak82}
G.S. Makanin.
\newblock {\em Equations in a free group} ({R}ussian).
\newblock {Izv. Akad. Nauk SSSR, Ser. Mat.}, 46:1199--1273, 1982.
\newblock transl. in Math. USSR Izv., V. 21, 1983; MR 84m:20040.
\bibitem{Mak84}
G.S. Makanin.
\newblock {\em Decidability of the universal and positive theories of a free
group}
  ({R}ussian).
\newblock {Izv. Akad. Nauk SSSR, Ser. Mat.}, 48(1):735--749, 1985.
\newblock transl. in Math. USSR Izv., V. 25, 1985; MR 86c:03009.
\bibitem{Merz}
Ju.~I. Merzljakov.
\newblock {\em Positive formulae on free groups.}
\newblock {Algebra i Logika}, 5(4):25--42, 1966.


\bibitem{MRS2}
Myasnikov A., Remeslennikov V., Serbin D.,\newblock {\it Fully
residually free groups and graphs labeled by infinite
words.}\newblock to appear in IJAC.

\bibitem{Pf} P. Pfander, {\em Finitely generated subgroups of the free $Z[t]$-group on two generators},
 Model theory of groups and automorphism groups (Blaubeuren, 1995), 166--187,
 London Math. Soc. Lecture Note Ser., 244, Cambridge Univ. Press, Cambridge, 1997.

\bibitem{RS}
E. Rips and Z. Sela.
\newblock {\em Cyclic splittings of finitely presented groups and the canonical JSJ decomposition.}
\newblock {Annals of Math.}, 146, 53--109, 1997.

\bibitem{Razborov1}
A. Razborov.
\newblock {\em On systems of equations in a free group.}
\newblock { Math. USSR, Izvestiya}, 25(1):115--162, 1985.
\bibitem{Razborov3}
A. Razborov.
\newblock {\it On systems of equations in a free group}.
\newblock PhD thesis, Steklov Math. Institute, Moscow, 1987.
\bibitem{Rem89} V. Remeslennikov, {\it $\exists$-free groups},
Siberian Math J., 30 (6):998-1001, 1989.
\bibitem{Sela1} Z. Sela. \newblock {\em Diophantine geometry over groups I: Makanin-Razborov diagrams.} \newblock
 Publications Mathematiques de l'IHES 93(2001), 31-105.

\bibitem{Stal}
J.R. Stallings.
\newblock {\em Finiteness of matrix representation.}
\newblock {\it Ann. Math.}, 124:337--346, 1986.








\bibitem {RZ}
L. Ribes, P. Zalesskii, {\it Conjugacy separability of amalgamated
free  products of groups.} J. Algebra, 1996, v.179, 3,
pp.751--774
\bibitem {Sela6} Z. Sela. \newblock {\em Diophantine geometry over groups VI: The elementary
theory of a free group.}  GAFA, 16(2006), 707-730.

\bibitem{Toui} N. Touikan, {\em On the coordinate groups of irreducible systems of equations in two variables over free groups}, arXiv:0810.1509v3, 11 Nov 2008.
\end{thebibliography}

\end{document}